\def\setliststart#1{\setcounter{\@listctr}{#1}%
  \addtocounter{\@listctr}{-1}}
 \newtheorem{The}{Theorem}[section]
 \newtheorem{Cor}[The]{Corollary}
 \newtheorem{Lem}[The]{Lemma}
 \newtheorem{Pro}[The]{Proposition}
 \theoremstyle{definition}
 \newtheorem{defn}[The]{Definition}
 \newtheorem{Rem}[The]{Remark}
 \numberwithin{equation}{section}
\newcommand{\R}{\mathbb{R}}
\title{A geometric approach to Mather quotient problem} 
\author{Wei Cheng}
\address{Wei Cheng, School of Mathematics, Nanjing University, 22 Hankou Road, Nanjing, 210093, Jiangsu, China.}
\email{chengwei@nju.edu.cn}
\author{ Wenxue Wei}
\address{Wenxue Wei, School of Mathematics, Nanjing University, 22 Hankou Road, Nanjing, 210093, Jiangsu, China.}
\email{wwx3708@gmail.com}
\date{\today}
\subjclass[2010]{35F21, 49L25, 37J50}
\keywords{Aubry-Mather theory, Mather quotient, Riccati equation, Harmonic one-form}
\begin{document}
\maketitle

\begin{abstract}
Let $(M,g)$ be a closed, connected and orientable Riemannian manifold with nonnegative Ricci curvature. Consider a Lagrangian $L(x,v):TM\to\R$ defined by $L(x,v):=\frac 12g_x(v,v)-\omega(v)+c$, where $c\in\R$ and $\omega$ is a closed 1-form. From the perspective of differential geometry, we estimate the Laplacian of the weak KAM solution $u$ to the associated Hamilton-Jacobi equation $H(x,du)=c[L]$ in the barrier sense. This analysis enables us to prove that each weak KAM solution $u$ is constant if and only if $\omega$ is a harmonic 1-form. Furthermore, we explore several applications to the Mather quotient and Ma\~n\'e's Lagrangian.
\end{abstract}

\section{Introduction}\label{sec1}

This paper focuses on a significant problem in Aubry-Mather theory originally posed by John Mather, concerning the Mather quotient. The Aubry-Mather theory employs variational methods to study Hamiltonian dynamical systems. Mather developed a theory to study the dynamics of the associated Euler-Lagrangian flow in frame of Tonelli theory in calculus of variations, by introduced certain invariant sets of the global Lagrangian dynamical systems such as Aubry set, Mather set, etc. (as detailed in \cite{Mather1991,Mather1993} and further elaborated in \cite{Mane1992}). 

\subsection{Mather quotient}

Concentrating on time-independent case, we suppose $M$ is a closed and connected smooth manifold with $TM$ and $T^*M$ its tangent and cotangent bundle respectively. A function $L(x,v):TM\to\R$ is called a \emph{Tonelli Lagrangian} if $L$ is of class $C^r$ ($r\geqslant3$) and $L(x,\cdot)$ is strictly convex and uniformly superlinear on $T_xM$ for all $x\in M$. The \emph{Tonelli Hamiltonian} $H:T^*M\to\R$ associated to a Tonelli Lagrangian $L$ is defined by $H(x,p)=\sup_{v\in T_xM}\{p(v)-L(x,v)\}$, $(x,p)\in T^*M$. In \cite{Mather1993} Mather introduced the \emph{Peierls' barrier function} $h:M\times M\to\R$, $h(x,y)=\liminf_{t\to+\infty}\{A_t(x,y)+c[L]t\}$, $x,y\in M$. Here $A_t(x,y)=\inf_{\rho}\int^t_0L(\rho,\dot{\rho})\ ds$ where the infimum is taken over the family of absolutely continuous curve $\rho:[0,t]\to M$ connecting $x=\rho(0)$ to $y=\rho(t)$, and $c[L]\in\R$, the Ma\~n\'e's critical value, is the unique constant such that $h$ is finite-valued (see \cite{Fathi_Siconolfi2004}). The projected Aubry set is defined by $\mathcal{A}(L)=\{x\in M: h(x,x)=0\}$. In \cite{Mather1993}, Mather also introduced a pseudo-metric $\delta$ on $\mathcal{A}(L)$ by
\begin{align*}
	\delta(x,y)=h(x,y)+h(y,x).
\end{align*}
The relation $x\sim y\Leftrightarrow\delta(x,y)=0$ gives an equivalence relation on $\mathcal{A}(L)$. The associated quotient space $(\mathcal{A}(L),\sim, \delta)$ is the so-called \emph{Mather quotient}.

In \cite{Mather2003}, Mather showed that if $M$ has dimension $2$ or if the Lagrangian is the kinetic energy associated to a Riemannian metric on $M$ with $\dim M\leqslant3$, then the Mather quotient is totally disconnected, i.e. every connected component consists of a single point. Unfortunately, this does not hold in higher dimensions (see \cite{Burago_Ivanov_Kleiner1997,Mather2004}). The totally  disconnectedness of Mather quotient is closely related to the upper semi-continuity of the Aubry set and has been studied in several earlier works such as \cite{Bernard2010b,Fathi_Figalli_Rifford2009,Mather2003,Sorrentino2008}. Those works consider this problem from either topological or variational points of view. 

Certain Morse-Sard type results on this problem can be found in \cite{Fathi_Figalli_Rifford2009}. The authors proved

\begin{The}
	Let $L$ be a Tonelli Lagrangian on a closed smooth manifold $M$. Then, it satisfies the Mather disconnectedness condition (i.e. for every pair $u_1,u_2$ of weak KAM solutions, the image $(u_1-u_2)(\mathcal{A}(L))\subset\R$ is totally disconnected) in the following five cases:
	\begin{enumerate}[\rm (i)]
		\item The dimension of $M$ is 1 or 2 .
		\item The dimension of $M$ is 3, and $\tilde{\mathcal{A}}(L)$, the Aubry set\footnote{In the context of weak KAM theory, $\tilde{\mathcal{A}}(L)=\bigcap_{u}\{(x,v)\in TM: d_xu=L_v(x,v)\}\subset TM$ with $u$ taken over all $C^1$ subsolution of \eqref{eq:HJ_wk}}, contains no fixed point of the associated Euler-Lagrange flow $\Phi_t^L$ (which is defined in Section 2).
		\item The dimension of $M$ is 3, and $L$ is of class $C^{3,1}$.
		\item The Lagrangian is of class $C^{k,1}$, with $k \geqslant 2\dim M-3$, and every point of $\tilde{\mathcal{A}}(L)$ is fixed under the Euler-Lagrange flow $\Phi_t^L$ .
		\item The Lagrangian is of class $C^{k, 1}$, with $k \geqslant 8 \dim M-8$, and either each point of $\tilde{\mathcal{A}}(L)$,  is fixed under the Euler-Lagrange flow $\Phi_t^L$ or its orbit in the $\tilde{\mathcal{A}}(L)$ is periodic with strictly positive period.
	\end{enumerate}
\end{The}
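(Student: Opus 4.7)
My plan is to recast the disconnectedness assertion in each case as a Morse--Sard statement about the difference $w := u_1 - u_2$ on the projected Aubry set. Two structural observations drive the approach. First, weak KAM theory tells us that on $\mathcal{A}(L)$ both $u_1$ and $u_2$ are differentiable and share the same differential—the one carried by the Lipschitz graph $\tilde{\mathcal{A}}(L)\to TM$—so $w$ is globally Lipschitz on $M$ and $dw\equiv 0$ on $\mathcal{A}(L)$. Second, $\mathcal{A}(L)$ is compact, so $w(\mathcal{A}(L))$ is a closed subset of $\R$; since any closed subset of $\R$ with Lebesgue measure zero is automatically totally disconnected (a nontrivial connected component would be an interval of positive length), it suffices to prove
\[
\mathcal{L}^1\bigl(w(\mathcal{A}(L))\bigr)=0
\]
in each of the five cases.

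For (i) I would apply the classical $C^{1,1}$ Morse--Sard theorem: the Lipschitz Aubry graph combined with a Whitney-type extension of $du_i$ from $\tilde{\mathcal{A}}(L)$ gives $w$ enough regularity near $\mathcal{A}(L)$, and in dimension $\leqslant 2$ the Bates hypothesis $k+1\geqslant n$ is satisfied with $k=1$. For (ii) the absence of fixed orbits forces the Euler--Lagrange vector field to be nonvanishing on $\tilde{\mathcal{A}}(L)$; since $w$ is constant along each resulting one-dimensional orbit, cutting by smooth local transversals reduces the problem to effective dimension $2$, where the $C^{1,1}$ version again applies. For (iii) the stronger $C^{3,1}$ assumption on $L$ supplies enough regularity of the Aubry lift at fixed points (through the invariant-manifold theory of the associated Hamiltonian flow) to invoke Bates' $C^{k,1}$ Morse--Sard theorem with $k=2$ and $n=3$.

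For cases (iv) and (v) I would stratify $\tilde{\mathcal{A}}(L)$ into fixed points and (in (v)) nontrivial periodic orbits and handle each stratum in coordinates tailored to the dynamics. At a fixed point the Aubry lift lies inside the invariant stable/center/unstable manifolds of the linearized flow, whose regularity is controlled by that of the Hamiltonian via Hirsch--Pugh--Shub-type theorems; combining this with a Whitney extension of $w$ lets $w$ inherit $C^{k,1}$ regularity in a neighborhood of the fixed point, up to a fixed derivative loss. The stated exponents $k\geqslant 2\dim M-3$ and $k\geqslant 8\dim M-8$ should encode exactly this derivative loss together with the Bates hypothesis $k+1\geqslant n$ (respectively $n-1$ in case (v), since a periodic orbit contributes an additional direction along which $w$ is constant, lowering the effective dimension by one).

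The principal obstacle will be the sharp regularity bookkeeping. The $C^{1,1}$ regularity of the Aubry graph is classical, but promoting it to $C^{k,1}$ along invariant manifolds of fixed and periodic orbits, and then extending $w$ with controlled loss past the cut locus so that a full-dimensional Morse--Sard theorem can be invoked, is delicate; matching the numerical thresholds $2\dim M-3$ and $8\dim M-8$ exactly is the technical heart of the argument, whereas the low-dimensional cases (i)--(iii) should follow by relatively soft arguments once the Lipschitz graph structure and Bates' theorem are in hand.
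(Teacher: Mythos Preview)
This theorem is not proved in the present paper at all: it appears in the introduction solely as a quotation of a result of Fathi--Figalli--Rifford (the paper writes ``In \cite{Fathi_Figalli_Rifford2009} the authors proved'' immediately before the statement), and no argument for it is given anywhere in the manuscript. The paper's own contributions are the Riccati/Laplacian estimates (Theorems~\ref{mechnaical case} and~\ref{solution and divergence}) and their consequences (Theorems~\ref{constant if and only if } and~\ref{mane constant}, Corollary~\ref{mather quotient}); the cited theorem is background motivation only. So there is no ``paper's own proof'' to compare your proposal against.

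For what it is worth, your outline is broadly in the spirit of the actual Fathi--Figalli--Rifford argument: the reduction to a measure-zero/Morse--Sard statement for $w=u_1-u_2$ via the fact that $du_1=du_2$ on $\mathcal{A}(L)$, together with Bates' $C^{k,1}$ Sard theorem, is exactly their starting point. Where your sketch is vaguest---the precise mechanism by which the regularity of $L$ is transferred to $w$ near fixed or periodic pieces of $\tilde{\mathcal{A}}(L)$, and the derivation of the specific thresholds $2\dim M-3$ and $8\dim M-8$---is also where the real work in \cite{Fathi_Figalli_Rifford2009} lies; it goes through Green bundles and normal-form/regularity arguments for the Hamiltonian flow rather than a direct Hirsch--Pugh--Shub invocation, and the bookkeeping is substantially more delicate than your paragraph suggests. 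But none of that is relevant to reviewing the present paper, which simply cites the result.
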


In \cite{Bernard2010b}, Bernard listed several so-called \emph{coincidence hypothesis} and shown that the Mather disconnectedness condition implies the property that the Mather quotient is totally disconnected. He also obtained the upper semi-continuity of the Aubry set under these conditions.

In \cite{Sorrentino2008} the author proved

\begin{The}
	Let $M$ be a closed connected smooth  manifold with dimension $n\geqslant 1$  and let $L$ be a Tonelli Lagrangian such that
	\begin{align*}
		\Lambda_L:=\{(x, L_v(x,0)): x\in M\}
	\end{align*}
	is a Lagrangian submanifold of $T^*M$ and $L(x,0)\in C^r(M)$,  with $r\geqslant 2n-2$ and $L_v(x,0)  \in C^2(M)$. Then, for every $[\omega]$ in the Liouville class of $\Lambda_L$ and $L_\omega(x,v):=L(x,v)-\omega(v)$, the Mather quotient $(\mathcal{A}(L_\omega),\sim,\delta)$ is totally disconnected.
\end{The}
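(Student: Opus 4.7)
The strategy is to normalize the closed 1-form $\omega$ within its cohomology class, show that the entire Aubry set of $L_\omega$ is made of fixed points of the Euler-Lagrange flow, and reduce disconnectedness of the Mather quotient to a quantitative Morse--Sard statement for the scalar function $\phi(x) := L(x,0)$.

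Since $\Lambda_L$ is Lagrangian, the 1-form $\omega^0_x := L_v(x,0)$ is closed and represents the Liouville class of $\Lambda_L$; any $\omega$ in that class can be written as $\omega = \omega^0 + df$ for a smooth $f : M \to \R$. Adding the exact 1-form $-df$ to $L_\omega$ only changes the action of any curve between fixed endpoints by a boundary term, so it leaves the Euler-Lagrange flow, the Aubry set, the Peierls barrier and the Mather quotient unchanged. I may therefore assume $\omega = \omega^0$, so that $\partial_v L_\omega(x,0) = 0$ identically in $x$. Strict fibrewise convexity of $L_\omega$ then forces
\begin{align*}
\min_{v \in T_xM} L_\omega(x,v) = L_\omega(x,0) = L(x,0) = \phi(x),
\end{align*}
whence $c[L_\omega] = -\min_M \phi$ and $\mathcal{A}(L_\omega) \subseteq \{\phi = \min_M \phi\}$. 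At every Aubry point $x$ one has both $d\phi_x = 0$ and $\partial_x L_\omega(x,0) = 0$, so the Euler--Lagrange equation places a fixed point at $(x,0)$; hence $\tilde{\mathcal{A}}(L_\omega)$ consists entirely of fixed points.

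Next I would quantify the Peierls barrier on the Aubry set. Expanding $L_\omega$ around the zero section gives $L_\omega(z,v) = \phi(z) + \tfrac12 \langle L_{vv}(z,0) v, v\rangle + O(|v|^3)$, whose coefficients have regularities $C^r$ and $C^2$ respectively by hypothesis. Substituting into the action and exploiting the fact that minimizing curves between Aubry points asymptotically concentrate on the set $\{\phi = \min_M \phi\}$ with vanishing velocities, one bounds $h(x,y) + h(y,x)$ for $x, y \in \mathcal{A}(L_\omega)$ by a one-dimensional variational expression involving only $\phi$ along curves in $\{\phi = \min_M \phi\}$ and the quadratic form $L_{vv}(\cdot,0)$ restricted there. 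This identifies the image of the semi-distance $\delta$ on the Aubry set with a set built from critical values of $\phi$.

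Finally, the regularity threshold $r \geqslant 2n-2$ is exactly what is needed for a Bates--Norton type quantitative Morse--Sard theorem to conclude that the critical values of $\phi : M \to \R$ form a totally disconnected subset of $\R$; combined with the previous step this shows that the Mather quotient embeds into a totally disconnected subset of $\R$ and is therefore itself totally disconnected. The main obstacle is the Peierls-barrier step: producing a sufficiently explicit formula for $h$ on $\mathcal{A}(L_\omega)$ so that its values are controlled by critical values of $\phi$. The delicate point is handling the quadratic correction $\tfrac12 \langle L_{vv}(\cdot,0) \dot\rho, \dot\rho\rangle$, which endows $\{\phi = \min_M \phi\}$ with a Riemannian-type structure that must not interfere with the dimension count in the Morse--Sard argument.
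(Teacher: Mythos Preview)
This statement appears in the paper only as a quotation of a result from \cite{Sorrentino2008}; the present paper does not prove it, so there is no proof here to compare your proposal against.

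On the substance of your proposal: the overall architecture is the correct one and is essentially Sorrentino's. Normalizing $\omega$ to $\omega^0 = L_v(\cdot,0)$ via the Liouville class, identifying $\mathcal{A}(L_\omega)$ with a subset of $\{\phi=\min\phi\}$ consisting of fixed points of the Euler--Lagrange flow, and closing with a quantitative Morse--Sard theorem at the threshold $r\geqslant 2n-2$ are exactly the right moves.

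The part that is not yet a proof is your ``Peierls-barrier step.'' Attempting to compute $h(x,y)+h(y,x)$ on $\mathcal{A}(L_\omega)$ by expanding $L_\omega$ to second order and reducing to a one-dimensional variational problem along $\{\phi=\min\phi\}$ is both vague and unnecessary; in particular, the claim that ``minimizing curves between Aubry points asymptotically concentrate on $\{\phi=\min\phi\}$ with vanishing velocities'' is not something you can use without substantial additional work, and the quadratic term you worry about does not, in fact, disappear so easily. The standard route bypasses any explicit formula for $\delta$: once you know $\tilde{\mathcal{A}}(L_\omega)$ consists of fixed points, you show that for any pair of weak KAM solutions $u_1,u_2$ the image $(u_1-u_2)(\mathcal{A}(L_\omega))$ is contained in the set of critical values of a function built from $\phi$ (with the stated regularity), and then the Bates--Norton/Yomdin Morse--Sard theorem gives the Mather disconnectedness condition. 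Total disconnectedness of the quotient then follows from the general implication recorded in this paper's introduction (the Fathi--Figalli--Rifford framework and Bernard's coincidence hypotheses). Replace your middle step with this reduction and the argument goes through.
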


From late 1990's, Albert Fathi developed celebrated weak KAM theory which serves as a conceptual bridge between the Aubry-Mather theory and the realm of partial differential equations (PDEs). Fathi proved there exists a unique constant $c[L]$, exactly the Ma\~n\'e's critical value, such that the Hamilton-Jacobi equation
\begin{equation}\label{eq:HJ_wk}\tag{HJ}
	H(x,d_xu)=c[L],\qquad x\in M
\end{equation}
admits a weak solution $u$ which is a common fixed point of the Lax-Oleinik semigroup $T^-_t+c[L]t$ for $t\geqslant0$ (See more details in Section 2). Such weak solutions are called weak KAM solutions. Weak KAM theory enables the application of PDEs and tools from differential geometry.

If $X$ is a $C^k$ vector field on a Riemannian manifold $(M,g)$ with $k\geqslant 2$, introduced by Ricardo Ma\~n\'e in \cite{Mane1992}, the Ma\~n\'e Lagrangian $L_X:TM\to\R$ associated to $X$ is defined by
\begin{align*}
	L_X(x,v)=\frac 12g_x(v-X,v-X),\quad \forall (x,v)\in TM. 
\end{align*}
In \cite{Fathi_Figalli_Rifford2009}, the authors also obtained 

\begin{Pro}
	Let $L_X:TM\to\R$ be the Ma\~n\'e Lagrangian associated to a $C^k$ vector field $X$ on a closed connected Riemannian manifold $(M,g)$ with $k\geqslant 2$. Assume that $L_X$ satisfies the Mather disconnectedness condition. Then we have the following:
	\begin{enumerate}[\rm (i)]
		\item The projected Aubry set $\mathcal{A}(L_X)$ is the set of chain-recurrent points of the flow of $X$ on $M$.
		\item The constants are the only weak KAM solutions of \eqref{eq:HJ_wk} associated to $L_X	$ if and only if every point of $M$ is chain-recurrent under the flow of $X$.
	\end{enumerate}
\end{Pro}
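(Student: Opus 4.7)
The argument splits into (i), two inclusions for the projected Aubry set, and (ii), an equivalence deduced from (i) together with the Mather disconnectedness hypothesis and the observation that $u\equiv 0$ is a weak KAM solution. Indeed $L_X\geqslant 0$ with equality precisely along $X$-orbits, so $c[L_X]=0$ and \eqref{eq:HJ_wk} reads $\tfrac12|d_xu|_g^2+d_xu(X)=0$, which constants trivially solve. For $\mathcal{R}(X)\subseteq\mathcal{A}(L_X)$, where $\mathcal{R}(X)$ denotes the chain-recurrent set of the flow of $X$, I take $x$ chain-recurrent and, for each $\varepsilon,T>0$, a one-step pseudo-orbit $x\to x$ consisting of an $X$-orbit arc of time $t_1\geqslant T$ ending $\varepsilon$-close to $x$, closed up by a short geodesic bridge of length $<\varepsilon$ traversed in time $\sqrt{\varepsilon}$. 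The orbit arc contributes $0$ to $\int L_X$, while the bridge contributes only $O(\sqrt{\varepsilon})$ by direct estimation of $\tfrac12|\dot\rho-X|^2$; sending $\varepsilon\to 0$ and $T\to\infty$ gives $h(x,x)=0$.

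The reverse inclusion $\mathcal{A}(L_X)\subseteq\mathcal{R}(X)$ is the main technical step. For $x\in\mathcal{A}(L_X)$, choose near-minimizers $\gamma_n:[0,t_n]\to M$ at $x$ with $t_n\to\infty$ and $A_n:=\tfrac12\int_0^{t_n}|\dot\gamma_n-X(\gamma_n)|_g^2\,ds\to 0$. Fix $T>0$, partition $[0,t_n]$ into subintervals of length $T$, and call a subinterval \emph{bad} if its contribution to $\int L_X$ exceeds $A_n^{1/2}$; Chebyshev bounds the number of bad subintervals by $A_n^{1/2}$, hence by a constant. On each good subinterval, Cauchy--Schwarz followed by a Gr\"onwall estimate along the flow of $X$ gives $|\gamma_n((k{+}1)T)-\phi_T^X(\gamma_n(kT))|_g\leqslant e^{CT}\sqrt{2TA_n^{1/2}}\to 0$; bridging across each bounded run of bad subintervals by a single jump of bounded time introduces at most $O(A_n^{1/2})$ additional jumps, themselves $\to 0$ by the same Gr\"onwall estimate. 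The sequence of selected partition points thus forms an arbitrarily fine pseudo-orbit for $X$ closing back at $x$, so $x\in\mathcal{R}(X)$. This Chebyshev--Gr\"onwall conversion of integrated $L^2$-smallness of $\dot\gamma_n-X$ into uniform $C^0$-jumps is where most of the work sits.

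Having (i), the $(\Leftarrow)$ direction of (ii) is immediate: if every point is chain-recurrent then $\mathcal{A}(L_X)=M$, and for any two weak KAM solutions $u_1,u_2$ the set $(u_1-u_2)(\mathcal{A}(L_X))=(u_1-u_2)(M)\subseteq\R$ is connected ($M$ is) while totally disconnected by the Mather disconnectedness hypothesis, hence a single point; since $u\equiv 0$ is a weak KAM solution, every weak KAM solution is a constant. For $(\Rightarrow)$, fix $y_0\in\mathcal{A}(L_X)$; then $u_{y_0}(x):=h(y_0,x)$ is a weak KAM solution with $u_{y_0}(y_0)=h(y_0,y_0)=0$, so by hypothesis $u_{y_0}\equiv 0$, i.e. $h(y_0,\cdot)\equiv 0$ for every $y_0\in\mathcal{A}(L_X)$. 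For any $x_0\in M$, choose $z\in\omega(x_0)$, which is nonempty and, being non-wandering, lies in $\mathcal{R}(X)=\mathcal{A}(L_X)$ by (i); the actual $X$-orbit chains $x_0$ forward to $z$, and the Chebyshev--Gr\"onwall procedure applied to near-minimizers of $h(z,x_0)=0$ chains $z$ back to $x_0$. Concatenation yields a chain $x_0\to x_0$, so $x_0\in\mathcal{R}(X)$, completing the proof.
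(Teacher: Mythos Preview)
This proposition is not proved in the present paper; it is quoted from \cite{Fathi_Figalli_Rifford2009} as background in the introduction, so there is no proof here against which to compare your attempt.

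That said, your argument for the inclusion $\mathcal{R}(X)\subseteq\mathcal{A}(L_X)$ has a genuine gap. You invoke ``a one-step pseudo-orbit $x\to x$ consisting of an $X$-orbit arc of time $t_1\geqslant T$ ending $\varepsilon$-close to $x$,'' but the existence of such a single-step chain for all $\varepsilon,T>0$ is exactly the definition of $x$ being \emph{non-wandering}, which is strictly stronger than chain-recurrent. Chain recurrence only furnishes a multi-step $(\varepsilon,T)$-chain $x=x_0,x_1,\dots,x_n=x$ with no a priori bound on $n=n(\varepsilon,T)$ as $\varepsilon\to 0$; your bridge construction then produces a loop at $x$ of action $O(n\sqrt{\varepsilon})$, and without control on $n$ you cannot conclude $h(x,x)=0$. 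The same uncontrolled-$n$ obstruction blocks the naive attempt to show, via Lipschitzness and $d_xu(X)=-\tfrac12|d_xu|_g^2\leqslant 0$, that a weak KAM solution is constant on chain components. It is also telling that your proof of (i) never uses the Mather disconnectedness hypothesis, and the gap propagates to the $(\Leftarrow)$ direction of (ii), which relies on $\mathcal{A}(L_X)=M$.

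By contrast, your Chebyshev--Gr\"onwall argument for $\mathcal{A}(L_X)\subseteq\mathcal{R}(X)$ and the $(\Rightarrow)$ direction of (ii) are along the right lines; note that the latter in fact only needs $\omega(x_0)\subseteq\Omega(X)\subseteq\mathcal{A}(L_X)$, the last inclusion being precisely your one-step construction applied where it is legitimate, namely at genuinely non-wandering points.
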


\begin{The}
	Let $X$ be a $C^k$ vector field, with $k\geqslant 2$, on a closed connected Riemannian manifold $(M,g)$. Assume that one of the following conditions hold:
	\begin{enumerate}[\rm (i)]
		\item The dimension of $M$ is 1 or 2.
		\item The dimension of $M$ is 3, and the vector field $X$ never vanishes.
		\item The dimension of $M$ is 3, and $X$ is of class $C^{3,1}$.
	\end{enumerate}
	Then the projected Aubry set $\mathcal{A}(L_X)$ of the Ma\~n\'e Lagrangian $L_X:TM\to\R$ associated to $X$ is the set of chain-recurrent points of the flow of $X$ on $M$. Moreover, the constants are the only weak KAM solutions of \eqref{eq:HJ_wk}  associated to  $L_X$ if and only if every point of $M$ is chain-recurrent under the flow of $X$.
\end{The}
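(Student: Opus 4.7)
The plan is to reduce the theorem to Proposition 1.3, which already yields both desired conclusions under the hypothesis that $L_X$ satisfies the Mather disconnectedness condition, by verifying this hypothesis in each of the three cases via Theorem 1.1 applied to the Tonelli Lagrangian $L_X$.

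Case (i) follows directly from Theorem 1.1(i). For Case (iii), one observes that $L_X(x,v)=\tfrac12 g_x(v-X(x),v-X(x))$ is polynomial in $v$ and of class $C^{3,1}$ in $x$ whenever $X$ is (using that $g$ is smooth, or at least as regular as $X$), so that $L_X$ is $C^{3,1}$ on $TM$ and Theorem 1.1(iii) applies. The substantive content lies in Case (ii): one has to show that the hypothesis ``$X$ never vanishes'' implies the hypothesis required by Theorem 1.1(ii), namely that $\tilde{\mathcal{A}}(L_X)$ contains no fixed point of the Euler-Lagrange flow $\Phi_t^{L_X}$.

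For this I would invoke the standard characterisation of the Aubry set for Ma\~n\'e's Lagrangian. Since $L_X(x,X(x))\equiv 0$ and the Ma\~n\'e critical value satisfies $c[L_X]=0$, every orbit of the flow of $X$ lifts to a globally calibrated curve for any weak KAM solution, and conversely every calibrated curve projects to an orbit of $X$. This forces $\tilde{\mathcal{A}}(L_X)\subset\{(x,X(x)):x\in M\}$, so the restriction of $\Phi_t^{L_X}$ to $\tilde{\mathcal{A}}(L_X)$ is conjugate, via the base projection $TM\to M$, to the flow of $X$ on the projected Aubry set $\mathcal{A}(L_X)$. A point $(x,v)\in\tilde{\mathcal{A}}(L_X)$ is therefore fixed by $\Phi_t^{L_X}$ if and only if $X(x)=0$; under the assumption that $X$ never vanishes, no such fixed point exists, and Theorem 1.1(ii) delivers the Mather disconnectedness condition.

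The key step is thus the identification of Euler-Lagrange fixed points on $\tilde{\mathcal{A}}(L_X)$ with zeros of $X$; once that correspondence is established, the chain of implications Theorem 1.1 $\Rightarrow$ Mather disconnectedness condition $\Rightarrow$ Proposition 1.3 closes the argument with no further work. I do not anticipate any serious difficulty in the regularity verification in Case (iii) or in the triviality of Case (i), so the whole proof is expected to be a short reduction rather than an independent argument.
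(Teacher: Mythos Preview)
This theorem is not proved in the present paper: it is quoted in the introduction as a result of Fathi--Figalli--Rifford \cite{Fathi_Figalli_Rifford2009}, alongside Theorem~1.1 and Proposition~1.3, to situate the paper's own contributions. There is therefore no ``paper's own proof'' to compare your proposal against.

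That said, your reduction is precisely the one carried out in the original reference: one checks that each of the three hypotheses (i)--(iii) places $L_X$ in the corresponding case of Theorem~1.1, so that $L_X$ satisfies the Mather disconnectedness condition, and then Proposition~1.3 delivers both conclusions. Your handling of case (ii) is the right one: since $c[L_X]=0$ and $L_X(x,X(x))=0$, the Aubry set $\tilde{\mathcal{A}}(L_X)$ is contained in the graph of $X$, and the Euler--Lagrange flow restricted to this graph is conjugate to the flow of $X$ on $M$; hence fixed points of $\Phi_t^{L_X}$ in $\tilde{\mathcal{A}}(L_X)$ correspond exactly to zeros of $X$. Your sketch is correct and matches the approach of \cite{Fathi_Figalli_Rifford2009}.
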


In a recent work (\cite{Contreras_Miranda2020}), applying Bernard-Contreras's theorem (\cite{Bernard_Contreras2008}) the authors proved that there exists a residual subset $\mathcal{G}\subset C^\infty(M)$  such that, if $L(x,v)=\frac{1}{2} e^{f(x)}g_x(v,v)$ with $f\in\mathcal{G}$, then, for any $[\omega]\in H^1(M,\R)$ and $L_\omega(x,v):=L(x,v)-\omega(v)$, the Mather quotient $(\mathcal{A}(L_\omega) ,\sim, \delta)$ has a finite number of elements.

\subsection{Mather quotient, Ricci curvature and Harmonic 1-form}

In this paper, we adopt a novel geometric perspective to examine Mather's problem, with a particular focus on the Ricci curvature of the kinetic Riemannian metric. A central objective of our research is to provide an estimation of the Laplacian of the weak KAM solution in relation to the kinetic energy Lagrangian. This Laplacian estimation is intrinsically linked to the core aspects of Mather's problem.

In his seminal work \cite{Mather1991}, John Mather observed the invariance of the Euler-Lagrange flow under transformations induced by adding exact 1-forms, and noted that the Aubry set is determined solely by the de Rham cohomology class. Furthermore, leveraging  Hodge's theorem, we understand that on a compact, oriented, smooth manifold, the Hodge cohomology $\mathcal{H}^1(M,\R)$ is isomorphic to the de Rham cohomology $H^1(M,\R)$. This isomorphism enables us to employ the Hodge cohomology to delve into the rigidity properties of the Aubry set, particularly in the context of the kinetic energy Lagrangian, under the condition that the manifold $M$ possesses nonnegative Ricci curvature.

The method used in this paper draw inspiration from the celebrating splitting theorem of Gromoll and Cheeger. In the realm of differential geometry, a particularly effective approach to estimating the Laplacian of solutions to the Hamilton-Jacobi equation involves the utilization of the Riccati equation. Through the viewpoint of differential geometry, we provide several estimates for the Laplacian of the weak KAM solution of \eqref{eq:HJ_wk}. Furthermore, we establish the following results.

\begin{The}\label{mechnaical case}
	Suppose $(M, g)$ is a closed connected Riemannian manifold with nonnegative Ricci curvature. Then, for each weak KAM solution $u$ of \eqref{eq:HJ_wk} associated to the mechanical Lagrangian $L(x, v)=\frac{1}{2} g_x(v, v)+f(x)$, we have
	\begin{align*}
		\Delta u(x)\leqslant\sqrt{-nk},\qquad \forall x\in M
	\end{align*}
	in the barrier sense, where $n=\dim M \geqslant2$  and $k$ is some nonpositive number such that $\Delta f(x)\geqslant k$ for all $x\in M$.
\end{The}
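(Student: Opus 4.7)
The plan mirrors the Cheeger--Gromoll splitting-theorem strategy: for each $x \in M$, one builds a $C^{2}$ upper barrier for $u$ along a backward $u$-calibrated curve, derives a Riccati-type inequality for the Laplacian of the barrier via Bochner's identity, and concludes the pointwise bound by ODE comparison. Concretely, standard weak KAM theory furnishes a backward $u$-calibrated curve $\gamma:(-\infty,0]\to M$ with $\gamma(0)=x$, and for each $T>0$ the value-function
\[
\phi_T(y) := u(\gamma(-T)) + A_T(\gamma(-T), y) + c[L]\cdot T
\]
is $C^{\infty}$ on a neighborhood of $x$ (since $\gamma$ has no conjugate points on $[-T,0]$), satisfies $\phi_T \geq u$ locally with equality at $x$, and therefore is a $C^{2}$ upper barrier with $\Delta u(x) \leq \Delta\phi_T(x)$ in the barrier sense.

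Along the characteristic $\gamma$ the barrier satisfies the energy relation $|\nabla\phi_T(\gamma(s))|^2 = 2(f(\gamma(s)) + c[L])$, inherited from the calibration of $\gamma$. Combining Bochner's identity, the Cauchy--Schwarz bound $|\nabla^2\phi_T|^2 \geq (\Delta\phi_T)^2/n$, and the hypothesis $\mathrm{Ric}\geq 0$ produces the scalar inequality
\[
\frac{d}{ds}\bigl(\Delta\phi_T(\gamma(s))\bigr) \;\leq\; \Delta f(\gamma(s))\;-\; \frac{(\Delta\phi_T(\gamma(s)))^2}{n},\qquad s\in[-T,0].
\]
Introducing the substitution $\Delta\phi_T = \sqrt{-nk}\tan\theta$ and using the lower bound $\Delta f \geq k$, one compares with the model Riccati $y'=k-y^2/n$; its solutions starting above $\sqrt{-nk}$ blow up in finite backward time, and since $\Delta\phi_T$ is uniformly bounded on the compact arc $\gamma|_{[-T,0]}$, one must have $\Delta\phi_T(x) \leq \sqrt{-nk}$. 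This gives $\Delta u(x)\leq\sqrt{-nk}$ in the barrier sense, as required.

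The principal obstacle is the rigorous derivation of the Riccati inequality: $\phi_T$ solves the stationary HJ equation only on $\gamma$ itself, not in a full neighborhood, so Bochner must be applied with care -- for instance, by working with the shape operator of the HJ wave fronts along the characteristic (which naturally carries the Riccati structure), or by replacing $\phi_T$ with a local HJ solution constructed from a small transverse hypersurface at $\gamma(-T)$. The scalar ODE step is also delicate, since the inhomogeneous $\Delta f(\gamma(s))$ must be dominated by the constant $k$ along the whole curve; the tangent substitution and a careful handling of the time direction of the gradient flow are expected to be the key technical ingredients.
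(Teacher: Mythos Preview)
Your overall plan—take a backward $u$-calibrated curve, build the $C^{2}$ upper barrier $\phi_T=u(\gamma(-T))+A_T(\gamma(-T),\cdot)+c[L]T$, control its Laplacian at $x$ by a Riccati inequality, and send $T\to\infty$—is exactly the paper's strategy. But two of your steps do not go through as written.

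\emph{The Riccati step.} The inequality $\frac{d}{ds}\Delta\phi_T(\gamma(s))\leqslant\Delta f-\tfrac{1}{n}(\Delta\phi_T)^{2}$ cannot be obtained from Bochner applied to the fixed function $\phi_T$. The obstacle is sharper than you state: not only does $\phi_T$ fail to solve the stationary Hamilton--Jacobi equation off $\gamma$, it fails already \emph{along} $\gamma$ for $s\neq 0$. Indeed $\nabla\phi_T(\gamma(s))$ is the terminal velocity of the time-$T$ minimizer from $\gamma(-T)$ to $\gamma(s)$, which for $s<0$ is not $\dot\gamma(s)$ (the latter is the terminal velocity of a time-$(T+s)$ curve). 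Hence the energy relation $|\nabla\phi_T(\gamma(s))|^{2}=2(f(\gamma(s))+c[L])$ holds only at $s=0$, and the Bochner computation collapses. The paper avoids this by never differentiating the fixed $\phi_T$ along the curve; instead it lets the time parameter vary and studies
\[
\Theta(s)=\Delta_y A_s\bigl(\gamma(-T),\,\cdot\,\bigr)\Big|_{y=\gamma(s-T)},\qquad s\in(0,T],
\]
the trace of the second fundamental form of the \emph{moving} wave front at time $s$. A Jacobi-field computation (your ``shape operator'' idea, carried out precisely) yields the matrix Riccati equation $\dot\Lambda+\Lambda^{2}+R+\nabla^{2}f=0$ for this object, and tracing gives the scalar inequality. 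Only at the endpoint $s=T$ does $\Theta(T)$ coincide with $\Delta\phi_T(x)$, which is all the barrier lemma needs.

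\emph{The comparison step.} Your backward blow-up argument relies on the Laplacian being uniformly bounded on $\gamma|_{[-T,0]}$, but for the correct quantity $\Theta(s)$ the opposite is true: $\Theta(s)\sim n/s$ as $s\to 0^{+}$, since $A_s(\gamma(-T),\cdot)$ degenerates at its source. The paper turns this blow-up into the initial asymptotic $\lim_{s\to 0^{+}}s^{2}\Theta(s)=0$ and runs a \emph{forward} comparison: the model equation $\dot\beta+\tfrac{1}{n}\beta^{2}+k=0$ with the same asymptotic has solution $\beta(s)=\sqrt{-nk}\coth(\sqrt{-k/n}\,s)$, and a monotonicity computation on $S_{n,k}^{2}(\Theta-\beta)$ gives $\Theta(T)\leqslant\beta(T)$. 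Letting $T\to\infty$ makes the $\coth$ factor tend to $1$, delivering $\Delta u(x)\leqslant\sqrt{-nk}$ in the barrier sense.
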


\begin{The}\label{solution and divergence}
	Suppose  $L(x,v)=\frac{1}{2}g_x(v,v)-f(x)-\omega(v)$ is  a Tonelli  Lagrangian on a closed connected Riemannian manifold $(M,g)$ with $ \omega$ is a closed 1-form. Set
	\begin{align*}
		E=\{(x,v)\in TM: H(x,L_v(x,v))=c[L]\},
	\end{align*}
	where $H:T^*M\rightarrow \R$ is the Tonelli Hamiltonian associated to $L$. If
	\begin{align*}
		(x,v)\in E\qquad\Longrightarrow\qquad\operatorname{Ric}(\dot{\rho}(s))+\Delta f(\rho(s))\geqslant0
	\end{align*}
	where $\rho(s)=\pi\circ\Phi_s^L(x,v)$, $s\in(-\infty,0]$, then, for any weak KAM solution $u$ of  \eqref{eq:HJ_wk}, we have
	\begin{align*}
		\Delta u(x)\leqslant-\operatorname{div}\omega^{\#}(x),\quad\forall x\in M
	\end{align*}
	in the barrier sense.
\end{The}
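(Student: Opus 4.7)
The plan is to mirror the Riccati-comparison argument behind Theorem~\ref{mechnaical case} (and the Cheeger--Gromoll splitting), producing at each point a smooth upper barrier from a backward calibrated curve and controlling its Laplacian via the Jacobi equation along the Euler--Lagrange orbit.

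Fix $x_0\in M$. By weak KAM theory there exists a backward calibrated curve $\rho:(-\infty,0]\to M$ of $u$ with $\rho(0)=x_0$. Because $\omega$ is closed, the Euler--Lagrange flow of $L$ coincides with that of the purely mechanical Lagrangian $L_0(x,v):=\tfrac12 g_x(v,v)-f(x)$, so $\rho$ is an $L_0$-orbit; by conservation of energy and the hypothesis one has $\operatorname{Ric}(\dot\rho(s))+\Delta f(\rho(s))\geqslant 0$ for every $s\leqslant 0$. Since a calibrated curve contains no conjugate point, $A^L_t(\rho(-t),\cdot)$ is smooth in a neighborhood of $x_0$ for every $t>0$, and
\begin{equation*}
\phi_t(x):=u(\rho(-t))+A^L_t(\rho(-t),x)+c[L]\,t
\end{equation*}
is a smooth upper barrier for $u$ at $x_0$: $\phi_t(x_0)=u(x_0)$ by calibration, while $\phi_t\geqslant u$ nearby is the fundamental weak KAM subsolution inequality.

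To control $\Delta\phi_t(x_0)$, use that the minimizers of $L$ and $L_0$ coincide while the endpoint momenta differ by $\omega$, so $d_x A^L_t(y,x)=d_x A^{L_0}_t(y,x)-\omega(x)$. Taking one further covariant derivative and invoking $d\omega=0$ yields
\begin{equation*}
\Delta_x A^L_t(y,x)\big|_{x_0}=\Delta_x A^{L_0}_t(y,x)\big|_{x_0}-\operatorname{div}\omega^{\#}(x_0).
\end{equation*}
Set $A(s):=\nabla^2_x A^{L_0}_{s+t}(\rho(-t),\cdot)\big|_{\rho(s)}$ and $\theta(s):=\operatorname{tr}A(s)$. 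The Jacobi equation of $L_0$ along $\rho$ is $\ddot J+(R_{\dot\rho}+\nabla^2 f)J=0$ with $R_{\dot\rho}X:=R(X,\dot\rho)\dot\rho$; parametrizing the Lagrangian submanifold generated by the $L_0$-action through $v\mapsto(v,A(s)v)$ converts it into the matrix Riccati equation $\dot A+A^2+R_{\dot\rho}+\nabla^2 f=0$. Taking the trace, applying $\operatorname{tr}(A^2)\geqslant\theta^2/n$, and invoking the hypothesis gives
\begin{equation*}
\theta'(s)\leqslant-\frac{\theta(s)^2}{n},\qquad s\in(-t,0].
\end{equation*}

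The short-time expansion $A^{L_0}_\tau(y,x)=\tfrac{1}{2\tau}d(y,x)^2+O(\tau)$ as $\tau\to 0^+$ forces $\theta(s)\to+\infty$ as $s\to(-t)^+$, so Riccati comparison with the exact solution $\hat\theta(s)=n/(s+t)$ of $\hat\theta'=-\hat\theta^2/n$ yields $\theta(0)\leqslant n/t$. Combining with the Hessian identity, $\Delta\phi_t(x_0)\leqslant n/t-\operatorname{div}\omega^{\#}(x_0)$; the barrier-sense definition of the Laplacian together with $t\to\infty$ then produces $\Delta u(x_0)\leqslant-\operatorname{div}\omega^{\#}(x_0)$. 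I expect the principal technical obstacle to be the rigorous justification of the blow-up $\theta(s)\to+\infty$ in the presence of the potential $f$, and the smoothness of $\phi_t$ for arbitrarily large $t$; both ultimately rest on the absence of conjugate points along globally calibrated orbits, a classical but indispensable input from weak KAM theory.
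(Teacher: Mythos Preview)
Your overall architecture is exactly the paper's: backward calibrated curve $\rho$, barrier $\phi_t=u(\rho(-t))+A_t^L(\rho(-t),\cdot)+c[L]t$, Riccati equation along the orbit, and the comparison $\Theta(t)\leqslant n/t$. The paper packages the $\omega$-contribution differently---it derives the Riccati equation directly for $\Lambda(s)=\nabla^2A^L_s+B(s)$ with $B_{ij}=g(\nabla_{e_i}\omega^{\#},e_j)$, so that $\operatorname{tr}\Lambda=\Delta A^L_s+\operatorname{div}\omega^{\#}$ from the start---whereas you try to peel off $\omega$ first and work with $A^{L_0}$.

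That decomposition is where there is a genuine gap. You assert that ``the minimizers of $L$ and $L_0$ coincide'' and therefore $d_xA^L_t(y,x)=d_xA^{L_0}_t(y,x)-\omega(x)$; but only the \emph{extremals} of $L$ and $L_0$ coincide. When $[\omega]\neq0$, the global minimizers can lie in different homotopy classes (think of $\omega=c\,dx_1$ on a flat torus with $c$ large: the $u$-calibrated curve winds many times, while the $L_0$-minimizer from $\rho(-t)$ to $x_0$ is a short segment). Consequently $A^{L_0}_t(\rho(-t),\cdot)$ need not be realized by $\rho|_{[-t,0]}$, the identity $d_xA^L=d_xA^{L_0}-\omega$ fails, and there is no reason $A^{L_0}_{s+t}(\rho(-t),\cdot)$ is even $C^2$ at $\rho(s)$ for all $s\in(-t,0]$, so your $A(s)$ and the Riccati equation for it are not well-defined as written.

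The repair is easy and makes your route coincide with the paper's: replace the global $A^{L_0}$ by the \emph{local} $L_0$-action along the specific extremal family emanating from $(\rho(-t),\dot\rho(-t))$ (equivalently, write $\omega=dh$ on a tubular neighbourhood of $\rho$ and set $\tilde A:=A^L+h$). Then $\nabla^2\tilde A=\nabla^2A^L+B$ is exactly the paper's $\Lambda$, satisfies $\dot\Lambda+\Lambda^2+R+\nabla^2f=0$, blows up like $n/\tau$ at $\tau=0$ (your short-time expansion is valid there because for small $\tau$ the $L$- and $L_0$-minimizers \emph{do} coincide), and the rest of your argument goes through verbatim. Your worries about the blow-up and about smoothness of $\phi_t$ for large $t$ are handled in the paper by Lemma~\ref{lem:estimate_near_0} and Theorem~\ref{thm:conjugate_weak_KAM}, respectively.
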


The theorems above implies some consequences on the Mather quotient and Ma\~n\'e Lagrangian.

\begin{The}\label{constant if and only if }
	Given an orientable connected closed Riemannian manifold $(M, g)$ with nonnegative Ricci curvature. Let $\omega$ be a closed 1-form on $M$ and let $X:=\omega^{\#}$ be its corresponding vector field. Then, for every constant $c\in\R$, each weak KAM solution $u$ of  \eqref{eq:HJ_wk} associated to the Lagrangian $L(x,v):=\frac 12g_x(v,v)-\omega(v)+c$ is  constant if and only if $\omega$ is a harmonic 1-form.
\end{The}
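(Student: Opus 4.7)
The plan is to prove the two implications separately, using Theorem \ref{solution and divergence} together with the Hodge decomposition of closed 1-forms. The non-trivial implication can be reduced, via Hodge, to the other, so the real analytic content lies in the direction ``$\omega$ harmonic $\Rightarrow$ every weak KAM solution is constant.''

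For the direction $(\Leftarrow)$, suppose $\omega$ is harmonic. I would apply Theorem \ref{solution and divergence} to $L$ with the potential $f\equiv -c$: since $\Delta f\equiv 0$ and $\operatorname{Ric}\geqslant 0$ by hypothesis, the curvature--potential condition $\operatorname{Ric}(\dot{\rho})+\Delta f\circ\rho\geqslant 0$ is trivially satisfied along every relevant orbit, and the theorem yields $\Delta u(x)\leqslant -\operatorname{div}\omega^{\#}(x)$ in the barrier sense. Harmonicity of $\omega$ forces $\delta\omega=0$, which on 1-forms translates to $\operatorname{div}\omega^{\#}=0$; consequently $\Delta u\leqslant 0$ in the barrier sense on the closed connected manifold $M$. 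I would then conclude that $u$ is constant by the strong maximum principle for barrier supersolutions. A concrete route, exploiting the semiconcavity of weak KAM solutions, is: semiconcavity makes the distributional Laplacian of $u$ a signed Radon measure; the barrier-sense inequality upgrades to $\Delta u\leqslant 0$ distributionally; integrating against the constant function $1$ on closed $M$ forces $\Delta u=0$ as a distribution; and elliptic regularity then makes $u$ harmonic, hence constant.

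For the direction $(\Rightarrow)$, I would assume that every weak KAM solution of $L$ is constant and decompose $\omega=\omega_h+df$ via the Hodge theorem, with $\omega_h$ harmonic and $f\in C^{\infty}(M)$. Set $L_h(x,v):=\tfrac{1}{2}g_x(v,v)-\omega_h(v)+c$, so that $L=L_h-df$. Since $L$ and $L_h$ differ by an exact 1-form, their Ma\~n\'e critical values agree, $c[L]=c[L_h]$, and their weak KAM solutions stand in natural bijection $u\leftrightarrow u+f$. Applying the already-established direction $(\Leftarrow)$ to $L_h$ (whose closed form $\omega_h$ is harmonic by construction), every weak KAM solution of $L_h$ is a constant. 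Hence, for any weak KAM solution $u$ of $L$, the corresponding function $u+f$ is a constant; combined with the standing hypothesis that $u$ itself is a constant, this forces $f$ to be a constant, so $df=0$ and $\omega=\omega_h$ is harmonic.

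The main obstacle I anticipate is the implication ``$\Delta u\leqslant 0$ in the barrier sense $\Rightarrow$ $u$ is constant'' on the closed manifold $M$ in the $(\Leftarrow)$ step: although this is morally just the maximum principle, $u$ is only Lipschitz and semiconcave, not $C^2$, so one must carefully bridge the barrier-sense, distributional-sense and pointwise-a.e.\ notions and invoke either Weyl's lemma or Hopf's strong maximum principle in the appropriate regularity. A secondary but routine point, worth verifying explicitly, is the bijection between the weak KAM solutions of $L$ and $L_h$ under the addition of the exact form $df$, including the invariance of the Ma\~n\'e critical value.
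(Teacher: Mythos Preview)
Your proof is correct, and the sufficiency direction $(\Leftarrow)$ matches the paper's argument almost exactly. Your anticipated obstacle there is a non-issue: the implication ``$\Delta u\leqslant 0$ in the barrier sense on a closed connected manifold $\Rightarrow$ $u$ constant'' is precisely Calabi's maximum principle (stated in the paper as Theorem~\ref{thm:MP}), which applies to continuous functions with no further regularity and needs only that $u$ attains a global minimum, automatic on compact $M$. Your detour through semiconcavity, distributional Laplacians, and Weyl's lemma is unnecessary.

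For the necessity direction $(\Rightarrow)$ you take a genuinely different route from the paper. The paper does \emph{not} use Hodge decomposition here; instead it observes that Theorem~\ref{solution and divergence} already applies to the original $\omega$ (since $f\equiv -c$ gives $\Delta f=0$ and $\operatorname{Ric}\geqslant 0$), yielding $\Delta u(x)\leqslant -\operatorname{div}\omega^{\#}(x)$ in the barrier sense; plugging in a constant $u$ gives $\operatorname{div}\omega^{\#}\leqslant 0$ pointwise, and then $\int_M \operatorname{div}\omega^{\#}\,d\mathrm{vol}=0$ forces $\operatorname{div}\omega^{\#}\equiv 0$, i.e.\ $\omega$ harmonic. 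Your argument---decompose $\omega=\omega_h+df$, feed $\omega_h$ back into $(\Leftarrow)$, and use the bijection $u\leftrightarrow u+f$ between weak KAM solutions of $L$ and $L_h$ to conclude $f$ is constant---is also valid and conceptually pleasant, but slightly less direct: it invokes the Hodge theorem and the Lax--Oleinik correspondence under exact perturbations, whereas the paper's argument needs only Stokes and a single application of Theorem~\ref{solution and divergence}. The paper's route makes clearer that Theorem~\ref{solution and divergence} is doing all the analytic work in \emph{both} directions.
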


\begin{Cor}\label{mather quotient}
	Given an orientable connected closed Riemannian manifold $(M,g)$ with nonnegative Ricci curvature. Let $L(x,v)=\frac 12g_x(v,v)$ be the kinetic energy associated to the Riemannian metric. Then, for each $[\omega]\in H^1(M,\R)$ and $L_\omega(x,v):=L(x,v)-\omega(v)$, the projected Aubry set $\mathcal{A}(L_\omega)=M$, and the set-valued map ${H}^1(M,\R)\ni[\omega]\rightrightarrows\mathcal{A}(L_\omega)$ is constant. Moreover, the Mather quotient $(\mathcal{A}(L_\omega),\sim, \delta)$ associated to the Lagrangian $L_\omega$ is a singleton.
\end{Cor}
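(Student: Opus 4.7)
The plan is to reduce Corollary~\ref{mather quotient} to Theorem~\ref{constant if and only if } via Hodge theory together with the standard properties of the Peierls barrier.

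First, I would recall Mather's observation that replacing $\omega$ by $\omega+d\phi$ changes the action along any curve only by the boundary term $\phi(\gamma(T))-\phi(\gamma(0))$; hence $c[L_\omega]$, the Peierls barrier $h_{L_\omega}$, the pseudo-metric $\delta$ and the projected Aubry set $\mathcal{A}(L_\omega)$ depend only on the cohomology class $[\omega]\in H^1(M,\R)$. Hodge's theorem produces a harmonic representative in every cohomology class, so I may assume from now on that $\omega$ itself is harmonic.

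Next I would apply Theorem~\ref{constant if and only if } (with additive constant $c=0$) to conclude that every backward weak KAM solution of $H(x,du)=c[L_\omega]$ is constant. For the symmetric forward statement, I note that $L_\omega(x,-v)=L_{-\omega}(x,v)$ and that $-\omega$ is again harmonic, so the same theorem applied to $-\omega$ shows that every weak KAM solution of $L_{-\omega}$ is constant; a curve-reversal computation yields $A_t^{L_\omega}(x,y)=A_t^{L_{-\omega}}(y,x)$, hence $c[L_\omega]=c[L_{-\omega}]$, $\mathcal{A}(L_\omega)=\mathcal{A}(L_{-\omega})$, and
\begin{equation*}
 h_{L_\omega}(x,y)=h_{L_{-\omega}}(y,x).
\end{equation*}

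Then I would extract the main conclusion from the Peierls barrier $h:=h_{L_\omega}$. For any $y\in\mathcal{A}(L_\omega)$, a classical theorem of Fathi identifies $x\mapsto h(y,x)$ as a backward weak KAM solution; by the previous step it is therefore constant and, since $h(y,y)=0$, vanishes identically. The analogous argument for $L_{-\omega}$ combined with the reversal identity gives $h(\cdot,y)\equiv0$ as well. Since constants are weak KAM solutions for $L_\omega$, the standard inequality $u(x)-u(y)\leqslant h(y,x)$ forces $h\geqslant0$ everywhere. The projected Aubry set is nonempty by a classical result, so fixing any $y\in\mathcal{A}(L_\omega)$ and using subadditivity of $h$, for every $x\in M$,
\begin{equation*}
 0\leqslant h(x,x)\leqslant h(x,y)+h(y,x)=0.
\end{equation*}
Thus $\mathcal{A}(L_\omega)=M$, and because this conclusion is independent of $[\omega]$ the set-valued map $[\omega]\rightrightarrows\mathcal{A}(L_\omega)$ is constantly equal to $M$. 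Finally, with $\mathcal{A}=M$ every $y$ lies in the Aubry set, so $h(x,y)=h(y,x)=0$ and $\delta(x,y)=0$ for all $x,y\in M$, meaning that the Mather quotient collapses to a single equivalence class.

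The main obstacle I anticipate is a bookkeeping one: I must check that Theorem~\ref{constant if and only if } is being applied to the genuine Hamilton--Jacobi equation of $L_\omega$ (no artificial additive shift) and that the Fathi--Siconolfi identification of $h(y,\cdot)$ as a backward weak KAM solution for $y\in\mathcal{A}(L_\omega)$ is available on the whole manifold. Once these standard pieces are in place, Hodge representation, time-reversal symmetry, $h\geqslant 0$ and subadditivity assemble into a short chain from which all three assertions of the corollary follow immediately.
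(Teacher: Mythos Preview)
Your proposal is correct and follows essentially the same route as the paper: reduce to a harmonic representative via Hodge theory, invoke Theorem~\ref{constant if and only if } to make all weak KAM solutions constant, use that $h(z,\cdot)$ (resp.\ $h(\cdot,z)$) is a weak KAM solution for $L_\omega$ (resp.\ for the reversed Lagrangian $L_{-\omega}$) vanishing at $z\in\mathcal{A}(L_\omega)$, and conclude $h\equiv0$ by nonnegativity and subadditivity. The only cosmetic difference is that the paper applies the triangle inequality as $h(x,y)\leqslant h(x,z)+h(z,y)=0$ to kill $h$ in one stroke, whereas you first deduce $h(x,x)=0$ and then bootstrap; both arrive at the same place.
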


\begin{The}\label{mane constant}
	Given an orientable connected closed Riemannian manifold $(M,g)$. Let $L(x,v)=\frac 12g_x(v-\omega^\sharp,v-\omega^\sharp)$ be the Ma\~n\'e's Lagrangian with $\omega$ a closed 1-form and $\omega^{\#}$ its corresponding vector field. Set
	\begin{align*}
		E=\{(x,v)\in TM: H(x,L_v(x,v))=c[L]\},\qquad f(x)=\frac{1}{2}g_x(\omega^\sharp,\omega^\sharp),
	\end{align*}
	with $H(x,p)$ the associated Tonelli Hamiltonian. If the following condition holds
	\begin{align*}
		(x,v)\in E\qquad\Longrightarrow\qquad\operatorname{Ric}(\dot{\rho}(s))+\Delta f(\rho(s))\geqslant0
	\end{align*}
	where $\rho(s)=\pi\circ\Phi_s^L(x,v)$, $s\in(-\infty,0]$, then every weak KAM solution $u$ of  \eqref{eq:HJ_wk} is constant if and only if $\omega$ is a harmonic 1-form, and the Mather quotient $(\mathcal{A}(L),\sim, \delta)$ is a singleton if and only if $ \omega$ is a harmonic 1-form. In particular, if $(M,g)$ has nonnegative Ricci curvature and $\omega$ is a harmonic 1-form, each weak KAM solution $u$ of  \eqref{eq:HJ_wk} is constant.
\end{The}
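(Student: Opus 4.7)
My plan is to reduce Theorem \ref{mane constant} to a Laplacian estimate in the spirit of Theorem \ref{solution and divergence}, followed by a short integration-by-parts argument on the closed manifold $M$. I would first expand the Ma\~n\'e Lagrangian as $L(x,v) = \tfrac12 g_x(v,v) - \omega(v) + f(x)$ with $f = \tfrac12 g(\omega^\sharp,\omega^\sharp)$; the associated Hamiltonian is $H(x,p) = \tfrac12 |p+\omega|^2 - f$, and since $L$ vanishes identically along integral curves of $X := \omega^\sharp$ the critical value is $c[L]=0$. Thus the Hamilton--Jacobi equation reads $|du+\omega|^2 = |\omega|^2$, and in particular $u \equiv 0$ is always a weak KAM solution.

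The heart of the argument is to show that, under the stated curvature--potential hypothesis, every weak KAM solution $u$ satisfies
\begin{align*}
\Delta u(x) \leq -\operatorname{div}\omega^\sharp(x),\qquad x \in M,
\end{align*}
in the barrier sense. I would obtain this by running the same Riccati/Bochner argument that proves Theorem \ref{solution and divergence}, applied along a backward calibrated curve $\rho$ of $u$ (which satisfies $\dot\rho = \nabla u + X$ at points of differentiability). The condition $\operatorname{Ric}(\dot\rho) + \Delta f(\rho)\geq 0$ supplies the pointwise curvature control needed to close the matrix Riccati inequality, while the linear term in $\omega$ produces exactly the correction $-\operatorname{div}\omega^\sharp$.

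Given this estimate, both equivalences follow almost immediately. If $\omega$ is harmonic then $\operatorname{div}\omega^\sharp = -\delta\omega = 0$, so $\Delta u \leq 0$ in the barrier sense on the compact connected manifold $M$, and the strong maximum principle forces $u$ to be constant. Conversely, if every weak KAM solution is constant then $\Delta u \equiv 0$, so $\operatorname{div}\omega^\sharp \geq 0$ pointwise; integrating against $dV$ on the closed manifold gives $\delta\omega \equiv 0$, which together with $d\omega = 0$ yields that $\omega$ is harmonic. The Mather-quotient equivalence follows because $u \equiv 0$ is always a weak KAM solution for the Ma\~n\'e Lagrangian: if the quotient is a singleton then weak KAM solutions are unique modulo additive constants, so all of them must in fact be constants; for the reverse, $h(\cdot,x)$ is a weak KAM solution for each $x\in \mathcal{A}(L)$, so its constancy combined with $h(x,x)=0$ gives $h(y,x)=0$ on $\mathcal{A}(L)$ and hence $\delta\equiv 0$. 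Finally, the ``in particular'' clause uses the Bochner identity $\Delta f = |\nabla\omega|^2 + \operatorname{Ric}(\omega^\sharp,\omega^\sharp)$ for a harmonic 1-form, which is nonnegative whenever $\operatorname{Ric} \geq 0$, so the curvature--potential hypothesis is automatic.

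The principal obstacle is the sign bookkeeping in establishing the Laplacian estimate. The Ma\~n\'e Lagrangian differs from that of Theorem \ref{solution and divergence} by the sign in front of the potential $f$, and a na\"ive transfer of the argument would produce the (too restrictive) hypothesis $\operatorname{Ric} - \Delta f \geq 0$. The delicate point is to exploit the special structural identity $f = \tfrac12|\omega^\sharp|^2$ specific to the Ma\~n\'e case inside the Riccati computation, so that the correct hypothesis ends up being $\operatorname{Ric} + \Delta f \geq 0$ as stated; this is what makes the ``in particular'' clause genuinely a consequence of nonnegative Ricci curvature alone.
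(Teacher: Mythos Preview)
Your plan is essentially the paper's proof: invoke the Laplacian estimate of Theorem~\ref{solution and divergence} to get $\Delta u \leqslant -\operatorname{div}\omega^\sharp$ in the barrier sense, then use Calabi's maximum principle (Theorem~\ref{thm:MP}) and Stokes' theorem for the two directions of the first equivalence, and combine the existence of the constant weak KAM solution $u\equiv 0$ with Proposition~\ref{differ by a constant } for the Mather-quotient statement (your hand-rolled argument with $h(\cdot,x)$ is precisely that proposition).

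On the sign issue you flag as the ``principal obstacle'': the paper does not resolve it in the way you anticipate. It simply writes ``invoking Theorem~\ref{solution and divergence}'' and proceeds, with no appeal to the special structure $f=\tfrac12|\omega^\sharp|^2$ inside the Riccati computation. So there is no hidden trick to uncover; the paper treats the direct citation of Theorem~\ref{solution and divergence} as sufficient.

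The one point where the paper's route differs from yours is the ``in particular'' clause. You propose the Bochner formula $\Delta f = |\nabla\omega|^2 + \operatorname{Ric}(\omega^\sharp,\omega^\sharp)\geqslant 0$ to verify the curvature--potential hypothesis. The paper instead invokes Proposition~\ref{some facts about Hodge Cohomology}\,(d): on a closed manifold with nonnegative Ricci curvature a harmonic $1$-form has \emph{constant} norm, so $f$ is constant and $\Delta f\equiv 0$. This is both shorter and more robust---it makes the curvature hypothesis hold trivially, independent of the sign in front of $\Delta f$, and thereby sidesteps the bookkeeping you were worried about.
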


The paper is organized as follows: In Section 2, we review certain basic facts from Aubry-Mather theory and Riemannian geometry, with a particular emphasis on the characteristics of conjugate points.  Section 3 is dedicated to the Riccati equation. In Section 4 we prove the main results of this paper. The paper also includes two appendices: one provides the proofs for the points mentioned in Section 2, and the other discusses the index form in the context of the Lagrangian framework.


\section{Preliminaries and Notions}

\subsection{Facts from Aubry-Mather theory and weak KAM theory}

We now recall the basic facts from Aubry-Mather theory and weak KAM theory (see \cite{Fathi_book,Fathi_Siconolfi2004,Fathi_Figalli2010} and more details on semiconcavity in \cite{Cannarsa_Cheng3,Cannarsa_Sinestrari_book}).

If $L$ is a Tonelli Lagrangian, we define the \emph{generating function}
\begin{align*}
	A_t(x, y)=\inf _{\rho \in \Gamma_{x, y}^t} \int_0^t L(\rho(s), \dot{\rho}(s))\ ds,\quad t>0, x,y\in M
\end{align*}
where $\Gamma_{x, y}^t=\{\rho\in AC([0, t],M): \rho(0)=x, \rho(t)=y\}$. A \emph{minimal curve for $A_t(x, y)$} is an absolutely continuous curve $\rho\in\Gamma^t_{x,y}$ such that
\begin{align*}
	A_t(x, y)=\int_0^t L(\rho(s),\dot{\rho}(s))ds.
\end{align*}
By classical Tonelli theory, the infimum in the definition of $A_t(x,y)$ can be achieved and any minimal curve $\rho$ is as smooth as $L$. In local charts, $\rho$ satisfies Euler-Lagrange equation
\begin{align}\label{E-L}\tag{E-L}
	\frac{d}{ds}L_v(\rho(s),\dot{\rho}(s))=L_x(\rho(s),\dot{\rho}(s)),\qquad s\in[a,b],
\end{align}
We call a $C^1$ curve $\rho:[a,b]\to M$ an \emph{extremal} for the Lagrangian $L$ if it satisfies \eqref{E-L}. It is well known that \eqref{E-L} defines a complete Euler-Lagrange flow $\Phi_t^L:TM \to TM$. 

We denote by $d_xA_t (\cdot,y)$ (resp. $ d_yA_t(x,\cdot)$) the differential of $A_t(x,y)$ with respect to the first (resp. second) variable. Similarly, the gradient of $A_t(x,y)$ with respect to the first (resp. second) variable will be denoted by $\nabla_xA_t(\cdot,y)$ (resp. $\nabla_yA_t(x,\cdot)$).


\begin{Pro}\label{pro:differential}
	If $L$ is a Tonelli Lagrangian on the connected closed Riemannian manifold $(M,g)$. Then, the following statements are true:
	\begin{enumerate}[\rm (1)]
		\item $A_t(x, y)$ is differentiable at $y$ if and only if there is unique minimal curve $\rho:[0,t]\to M$ for $A_t(x, y)$. Moreover, if $A_t(x, y)$ is differentiable at $y$, we have
		\begin{align*}
			d_y A_t(x, y)=L_v(\rho(t), \dot{\rho}(t)).
		\end{align*}
		\item $A_t(x, y)$ is differentiable at $x$ if and only if there is unique  minimal curve $\rho:[0,t]\to M$ for $A_t(x, y)$. Moreover, if $A_t(\cdot,y)$ is differentiable at $x$, we have
		\begin{align*}
			d_x A_t(x, y)=-L_v(\rho(0), \dot{\rho}(0)).
		\end{align*}
		\item For any $\tau>0$ there exists a compact subset $K_\tau \subset TM$ satisfies the following property: if $\rho:[0,t]\to M$ is a minimal curve for $A_t(x,y) $ with $t>\tau$, then
		\begin{align*}
			(\rho(s),\dot{\rho}(s))\in K_\tau ,\qquad \forall s\in[0,t].
		\end{align*}
	\end{enumerate}
\end{Pro}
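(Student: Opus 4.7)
The plan is to treat the three assertions separately, though (1) and (2) are symmetric. For the differentiability statements I would invoke local semiconcavity of $A_t$ in each variable together with the first variation formula along a minimizing extremal; for the compactness statement (3) I would combine an action bound with Tonelli superlinearity and the fact that every subsegment of a minimizer is itself a minimizer.

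For part (1), I would first prove that $y\mapsto A_t(x,y)$ is locally semiconcave. The standard construction is to fix a minimizer $\rho$ realizing $A_t(x,y)$ and, for $z$ near $y$, modify $\rho$ on $[t-\epsilon,t]$ by attaching a short geodesic from $\rho(t-\epsilon)$ to $z$; using this as a competitor yields a quadratic upper bound and hence semiconcavity with a constant controlled by $\|D^2L\|$ on the compact set of velocities furnished by part (3). Next, the first variation formula, obtained by integration by parts and \eqref{E-L}, shows that $L_v(\rho(t),\dot\rho(t))$ is a reachable supergradient of $A_t(x,\cdot)$ at $y$, one such covector arising from each minimizer $\rho$. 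The equivalence then follows from a general fact about semiconcave functions: differentiability at a point is equivalent to the superdifferential being a singleton, which here corresponds exactly to the uniqueness of the minimizer, and the explicit formula identifies $d_yA_t(x,y)$ with the unique reachable supergradient. Part (2) is identical with the endpoints reversed; the sign in $d_xA_t=-L_v(\rho(0),\dot\rho(0))$ comes from the direction of the variation vector at the initial endpoint.

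For part (3), I would produce a two-sided bound on the action: an upper bound $A_t(x,y)\leqslant C(1+t)$ obtained by choosing any absolutely continuous curve of bounded length joining $x$ to $y$ and traversing it at a suitably small speed, and a lower bound $A_t(x,y)\geqslant -C't$ coming from the superlinearity estimate $L(x,v)\geqslant a|v|-b$. These together bound the average value of $L(\rho,\dot\rho)$ on any subinterval of length at least $\tau/2$; since every such subsegment of $\rho$ is also a minimizer, a mean-value argument on each such subinterval produces at least one instant at which $L(\rho,\dot\rho)$, and therefore $|\dot\rho|$ by superlinearity, is bounded by a constant depending only on $\tau$. Finally, the completeness and continuous dependence of the Euler-Lagrange flow $\Phi_s^L$ transport each such pointwise bound into a uniform bound on $|\dot\rho|$ over the surrounding subinterval of length $\tau/2$, and since such subintervals cover $[0,t]$, the whole orbit lies in a common compact set $K_\tau\subset TM$.

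The main obstacle I anticipate is the uniformity step in part (3): the mean-value argument alone only yields a single bounded-velocity instant per subinterval, so converting this into a uniform bound along the whole curve requires exploiting the subsegment-minimality of $\rho$ in combination with the local flow estimates, rather than just a single initial-value argument. The remaining delicacy, in (1) and (2), is the first-variation identification of $L_v(\rho(t),\dot\rho(t))$ with a reachable supergradient: the variations must keep the initial endpoint fixed and be smooth, but since the minimizer itself is smooth by Tonelli regularity this is straightforward once local coordinates and parallel transport are in place.
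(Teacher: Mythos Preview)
The paper does not actually supply its own proof of this proposition: it is stated in Section~2 as a standard fact from weak KAM theory, with references to Fathi's book and to Cannarsa--Sinestrari for the semiconcavity machinery, and no argument appears either in the main text or in the appendix. Your proposal is precisely the standard route taken in those references: local semiconcavity of $A_t(x,\cdot)$, identification of reachable supergradients via the first variation formula, and the characterization of differentiability of a semiconcave function as uniqueness of the superdifferential for parts (1)--(2); and an action bound plus superlinearity plus subsegment-minimality plus a flow argument for part (3). The argument is correct. Two small points worth tightening: in (1) you should make explicit that distinct minimizers yield distinct supergradients (this uses that $v\mapsto L_v(y,v)$ is injective together with Cauchy--Lipschitz), and that every limiting gradient in $D^*A_t(x,\cdot)(y)$ is reachable (this uses the compactness from part (3) to pass to limits of minimizers). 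In (3), the subintervals you use should have length bounded both below (by $\tau/2$, to control the mean) and above (say by $\tau$, so that the flow for a bounded time maps the compact set $\{|v|\leqslant R_\tau\}$ into a fixed compact $K_\tau$); your description is essentially this, but the phrase ``length at least $\tau/2$'' alone does not suffice for the flow step.
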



Let $u \in C^0(M)$ and $t>0$. We define respectively the \emph{negative} and \emph{positive Lax-Oleinik operators}: for any $x\in M$, 
\begin{align*}
	T_t^-u(x)=\inf _{y \in M}\{u(y)+A_t(y, x)\},\qquad T_t^+u(x)=\sup _{y \in M}\{u(y)-A_t(x, y)\}.
\end{align*}
As usual we define $T^\pm_0=id$. From weak KAM theory, a function $u$ is a  weak KAM solution of  \eqref{eq:HJ_wk}  if and only if $T^-_tu(x)+c[L]t=u(x)$ for all $t\geqslant0$. This implies that,  if $u:M\to \R $ is a weak KAM solution of \eqref{eq:HJ_wk}, then for any continuous piecewise $C^1$ curve $\rho:[a,b]\to M$, $a<b$,
\begin{align*}
	u(\rho(b))-u(\rho(a)) \leq \int_a^b L(\rho(s) , \dot{\rho}(s)) d s+c[L](b-a).
\end{align*}
A curve $\rho: [a,b]\to M$ is \emph{$(u,L,c[L])$-calibrated} on $[a,b]$, or \emph{$u$-calibrated} for short, if for every $t,s\in[a,b]$ with $t\leqslant s$,
\begin{align*}
	u(\rho(s))-u(\rho(t))=\int_t^s L(\rho(z), \dot{\rho}(z)) d z+c[L](s-t).
\end{align*}
If $u$ is a weak KAM solution of \eqref{eq:HJ_wk}, then for any $x\in M$, there exists a $(u,L,c[L])$-calibrated curve $\rho:(-\infty,0]\to M$ such that $\rho(0)=x$. One can refer to \cite{Cannarsa_Sinestrari_book,Rifford2008} for more in the case when $u$ is not differentiable at $x$.

For the associated Tonelli Hamiltonian $H:T^*M\to\R$ of $L$, in local charts, we have following Hamiltonian ODE
\begin{equation}\label{H-S}
\left\{  
\begin{aligned}
	\dot{x}&=H_p(x,p), \\
	\dot{p}&=-H_x(x,p).
\end{aligned}\right.	
\end{equation}
We call $\Phi_t^H$ the Hamiltonian flow associated with $\Phi_t^L$. The Legendre transform 
\begin{align*}
\mathcal{L}(x,v)=(x,L_v(x,v))
\end{align*}
define a diffeomorphism from $TM$ to $T^*M$, and it establishes a correspondence between the  Euler-Lagrange flow $\Phi_t^L$ and its corresponding Hamilton flow by
\begin{align*}
\Phi_t^H=\mathcal{L}\circ\Phi_t^L\circ\mathcal{L}^{-1}.
\end{align*}

%


\subsection{Facts from Riemannian Geometry }

Let us recall some basic facts about Riemannian geometry. For more details in Riemannian geometry we refer to  \cite{do_Carmo1992book,Petersen_book2016,Sakai1996book}.


Let $\nabla$ be the Riemannian connection on $(M,g)$. The curvature tensor of the Riemannian connection $\nabla$ is defined by
\begin{align*}
R:\Gamma(TM)\times\Gamma(TM)\times\Gamma(TM)\to&\,\Gamma(TM),\\
(X,Y,Z)\mapsto&\,R(X,Y)Z=\nabla_X\nabla_YZ-\nabla_Y\nabla_XZ-\nabla_{[X, Y]}Z.
\end{align*}
The Ricci curvature at $v \in T_p M$ is defined as
\begin{align*}
\operatorname{Ric}(v)=\operatorname{tr}(w\mapsto R(w,v)v).
\end{align*}

Next, we introduce some differential operators on Riemannian manifold. Given a Riemannian manifold $(M, g)$ with its Riemannian connection $\nabla$.
\begin{enumerate}[--]
\item The gradient of a function $f \in C^1(M)$ is given by
\begin{align*}
\nabla: C^1(M) &\,\to C(M),\\
\nabla f\mapsto&\,(df)^{\#}.
\end{align*}
An equivalent definition is that $\nabla f(x)$ is the unique vector in $T_x M$ such that
\begin{align*}
df(v)=g(\nabla f,v)
\end{align*}
for any $v\in T_xM$.
\item The divergence of a vector field $X\in\Gamma(TM)$ is
\begin{align*}
& \operatorname{div}: \Gamma(TM)\to C^{\infty}(M) \\
& \operatorname{div}X\mapsto\operatorname{tr}(Y\mapsto\nabla_YX).
\end{align*}
\item The Laplacian operator is defined as
\begin{align*}
\Delta: C^2(M)\to&\,C(M),\\
\Delta f\mapsto&\,\operatorname{div}\nabla f.
\end{align*}
\item When seen as a $(1,1)$ type tensor, the Hessian of $f\in C^2(M)$ is given by
\begin{align*}
\operatorname{Hess}f:\Gamma(T M)\to&\,\Gamma(TM),\\
\operatorname{Hess}f(X)\mapsto&\,\nabla_X\nabla f.
\end{align*}
\item When viewed as a $(0,2)$ type tensor, the Hessian of $f\in C^2(M)$ is
\begin{align*}
\nabla^2f:\Gamma(TM)\times\Gamma(T M)\to&\,C^{\infty}(M),\\
\nabla^2f(X,Y)\mapsto&\,g(\nabla_X\nabla f,Y).
\end{align*}
\end{enumerate}
In fact, we can consider Hessian operators for a function $f$ that are not of class $C^2$, even not continuously differentiable. For more details about this we refer to \cite{Villani_book2009}.

Given a compact oriented  Riemannian manifold $(M,g)$. Let $\Omega^k(M)$ be the space of $k$-form on $M$ and let $\Omega(M)=\cup_k\Omega^k(M)$. Recall that the Riemannian metric $g$ induces an inner product on $T_p^*M$. Extending this inner product from $T_p^*M$ to its k-th exterior wedge $\bigwedge^k(T_p^*M )$ one can obtain an inner product $\langle\cdot, \cdot\rangle_p$	on $\bigwedge^k(T_p^*M )$. The inner product on $\Omega^k(M)$ is then defined as
\begin{align*}
\Omega^k(M)\times\Omega^k(M)\to&\,\R,\\
(\omega,\eta)\mapsto&\,\int_M\langle\omega,\eta\rangle_p\sigma,
\end{align*}
where $\sigma $ is the volume form associated to $g$. If we require $(\omega,\eta)=0$ for $ \omega\in\Omega^k(M)$, $\eta\in \Omega^l(M)$ with $k\neq l$, we get an inner product $(\cdot,\cdot)$ on $\Omega(M)$.  

Since the exterior differential operator $d :\Omega^k(M)\to\Omega^{k+1}(M)$ is a linear operator on the inner space $(\Omega(M), (\cdot,\cdot))$, one has a linear adjoint operator
\begin{align*}
\delta:\Omega^{k+1}(M)\to\Omega^k(M) 
\end{align*}
of $d$ such that $(d\omega,\eta)=(\omega,\delta\eta)$. The Hodge Laplacian is then defined by 
\begin{align*}
\varDelta=d\delta+\delta d.
\end{align*}
This is a second order linear differential operator. By definition, $\omega\in\Omega(M)$ is a  harmonic form if $\varDelta\omega=0$.

Now we list some basic facts about Hodge cohomology (see, for instance, \cite{Petersen_book2016,Warner1983book}).

\begin{Pro}\label{some facts about Hodge Cohomology}
Let $(M,g)$ be a compact oriented Riemannian manifold.	The following statements hold true:
\begin{enumerate}[\rm (a)]
\item Every harmonic form is closed.
\item  A closed 1-form $\omega$ is harmonic if and only if $\operatorname{div}\omega^\sharp=0$. 
\item (Hodge Theorem): The Hodge Cohomology $\mathcal{H}(M,\R)$ is isomorphic to the De Rham Cohomology $H(M, \R)$. 
\item (Bochner  Theorem): If $(M,g)$ has nonnegative Ricci curvature, then $g(\omega^\sharp,\omega^\sharp)$ is constant for every harmonic 1-form $\omega$. \label{constant harmonic}
\end{enumerate}
\end{Pro}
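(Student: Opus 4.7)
The plan is to dispatch (a)--(d) by classical Hodge-theoretic arguments, relegating the deep analytic input to (c). For (a), I pair $\varDelta\omega$ with $\omega$ in the induced $L^2$ inner product; using that $\delta$ is the formal adjoint of $d$ one gets
\begin{align*}
(\varDelta\omega,\omega)=(d\delta\omega,\omega)+(\delta d\omega,\omega)=\|\delta\omega\|^2+\|d\omega\|^2,
\end{align*}
so $\varDelta\omega=0$ forces both $d\omega=0$ and $\delta\omega=0$; in particular every harmonic form is closed. Part (b) then reduces to the pointwise identity $\delta\omega=-\operatorname{div}\omega^\sharp$ on 1-forms, which I verify by pairing against an arbitrary test function $f\in C^\infty(M)$ and applying the divergence theorem on the closed manifold $M$:
\begin{align*}
(df,\omega)=\int_M g(\nabla f,\omega^\sharp)\,\sigma=-\int_M f\,\operatorname{div}\omega^\sharp\,\sigma=(f,-\operatorname{div}\omega^\sharp).
\end{align*}
Combined with the characterization from (a), a closed 1-form $\omega$ is harmonic iff $\delta\omega=0$ iff $\operatorname{div}\omega^\sharp=0$.

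Part (c) is the Hodge theorem proper. Here I would appeal to standard elliptic theory: $\varDelta$ is a second-order, self-adjoint, elliptic operator on each $\Omega^k(M)$, and on a compact manifold such operators have finite-dimensional kernel and closed range, yielding the orthogonal $L^2$-decomposition
\begin{align*}
\Omega^k(M)=\mathcal{H}^k(M)\oplus d\Omega^{k-1}(M)\oplus\delta\Omega^{k+1}(M).
\end{align*}
Given this, any closed form $\alpha=h+d\beta+\delta\gamma$ satisfies $0=(d\alpha,\gamma)=\|\delta\gamma\|^2$, so $\alpha-d\beta=h\in\mathcal{H}^k(M)$; this furnishes surjectivity of the natural map $\mathcal{H}^k(M)\to H^k(M,\R)$, and injectivity follows from the orthogonality of $\mathcal{H}^k$ with $d\Omega^{k-1}$. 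The decomposition itself (Sobolev spaces of forms, elliptic regularity, Rellich compactness) is the analytic heart of the matter, and I would invoke it as a cited black box rather than reprove it.

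For (d), I would apply the Bochner--Weitzenbock identity for 1-forms,
\begin{align*}
\tfrac{1}{2}\Delta|\omega|^2=|\nabla\omega|^2-\langle\varDelta\omega,\omega\rangle+\operatorname{Ric}(\omega^\sharp,\omega^\sharp),
\end{align*}
where $\Delta$ on the left denotes the scalar Laplacian $\operatorname{div}\nabla$. If $\omega$ is harmonic and $\operatorname{Ric}\geqslant 0$, the right-hand side is nonnegative, so $|\omega|^2$ is subharmonic on the closed manifold $M$; the strong maximum principle then forces $|\omega|^2$ to be constant, whence $\nabla\omega\equiv 0$ and in particular $g(\omega^\sharp,\omega^\sharp)$ is constant, as claimed. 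The main obstacle in the whole proposition is the Hodge decomposition underlying (c); parts (a), (b), and (d) each reduce to a single identity (adjointness of $d$/$\delta$, the divergence formula for $\delta$, and the Bochner formula) and present no real difficulty.
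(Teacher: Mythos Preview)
Your proof sketch is correct and follows the standard route: the adjointness identity for (a), the divergence-theorem computation of $\delta$ on 1-forms for (b), the Hodge decomposition (quoted as a black box) for (c), and the Bochner--Weitzenb\"ock formula together with the maximum principle for (d). The paper itself does not prove this proposition at all; it merely states it as a list of well-known facts with a citation to standard textbooks (Petersen, Warner), so there is no ``paper's own proof'' to compare against --- you have simply supplied what the authors chose to omit.
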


\subsection{Conjugate Points and Jacobi Fields}
In order to estimate the Laplacian of the generating function $A_t(x, y)$, we need to discuss the  conjugate points and the Jacobi fields. This topic is well known in Riemannian geometry. However, for the sake of convenience we shall deal with these points in the frame of Lagrange geometry (see, also \cite{Contreras_Iturriaga1999} in the Hamiltonian frame). 

\begin{defn}
Suppose $L:TM\to\R$ is a Tonelli Lagrangian on a closed connected Riemannian manifold $(M,g)$ and $\rho:[a,b]\to M$ is an extremal. A variation of extremal curves along $\rho$ is a map $\rho(t,s)\in C^2([a, b]\times(-\varepsilon,\varepsilon))$ satisfying
\begin{enumerate}[\rm (1)]
\item $\rho(t,0)=\rho(t)$ for all $t\in [a,b]$.
\item $\rho(\cdot,s):[a,b]\to M$ is an extremal curve for each $s\in(-\varepsilon, \varepsilon)$. 
\end{enumerate} 
Let $J:[a,b]\to TM$ be a vector field along $\rho$.  We say that $J$ is a \emph{Jacobi field}  if one can find a variation $\rho(t,s)\in C^2([a, b]\times(-\varepsilon,\varepsilon))$ of extremal curves along $\rho$ such that
\begin{align*}
J(t)=\left.\frac{\partial}{\partial s}\right|_{s=0}\rho(t,s).
\end{align*}
\end{defn}

\begin{Pro}\label{pro:Jabobi_field}
Let $L:TM\to\R$ be a  Tonelli Lagrangian on a closed connected Riemannian manifold $(M,g)$. If  $\rho:[a,b]\to M$ is an extremal and $J:[a,b]\to TM$ is a vector field along $\rho$, then $J$ is a Jacobi field along $\rho$ if and only if $J$ solves the second order linear Jacobi equation
\begin{equation}\label{eq:Jacobi}
\frac d{dt}(L_{vx}(\rho(t),\dot{\rho}(t))J(t)+L_{vv}(\rho(t),\dot{\rho}(t))\dot{J}(t)) =L_{xx}(\rho(t),\dot{\rho}(t))J(t)+L_{xv}(\rho(t),\dot{\rho}(t))\dot J(t). 
\end{equation}
in local chart.
\end{Pro}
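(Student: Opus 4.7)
The plan is to work in a local chart throughout, where the statement is purely a statement about ODEs, and to prove the two implications by (i) differentiating the Euler-Lagrange equation along the variation parameter, and (ii) using existence and uniqueness for the linear Jacobi system together with the smooth dependence of the Euler-Lagrange flow on initial data.

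For the ``only if'' direction, I would take a variation of extremals $\rho(t,s)\in C^2([a,b]\times(-\varepsilon,\varepsilon))$ with $\rho(t,0)=\rho(t)$ and $J(t)=\partial_s\rho(t,0)$. By hypothesis, for every fixed $s$,
\begin{equation*}
\frac{d}{dt}L_v(\rho(t,s),\dot\rho(t,s))=L_x(\rho(t,s),\dot\rho(t,s)).
\end{equation*}
I would then differentiate this identity with respect to $s$ at $s=0$ and commute $\partial_s$ with $\partial_t$ (which is legitimate because $\rho$ is $C^2$, so $\partial_s\dot\rho(t,0)=\dot J(t)$). Expanding the derivative of the composition via the chain rule and collecting terms, the left-hand side becomes $\frac{d}{dt}\bigl(L_{vx}(\rho,\dot\rho)J+L_{vv}(\rho,\dot\rho)\dot J\bigr)$ and the right-hand side becomes $L_{xx}(\rho,\dot\rho)J+L_{xv}(\rho,\dot\rho)\dot J$, which is exactly \eqref{eq:Jacobi}.

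For the ``if'' direction, I would argue by a dimension/uniqueness count. Because $L$ is Tonelli, $L_{vv}(\rho(t),\dot\rho(t))$ is invertible for each $t$, so \eqref{eq:Jacobi} is a linear second-order ODE in $J$ with smooth coefficients, and solutions are uniquely determined by the data $(J(a),\dot J(a))\in T_{\rho(a)}M\times T_{\rho(a)}M$. It therefore suffices to exhibit, for arbitrary prescribed $(J_0,\dot J_0)$, a variation of extremals along $\rho$ whose variation field has these initial values. To do so, I would fix a smooth curve $\gamma:(-\varepsilon,\varepsilon)\to TM$ with $\gamma(0)=(\rho(a),\dot\rho(a))$ and $\gamma'(0)=(J_0,\dot J_0)$ in the natural chart on $TM$, and then define
\begin{equation*}
\rho(t,s):=\pi\circ\Phi_{t-a}^L(\gamma(s)),\qquad (t,s)\in[a,b]\times(-\varepsilon,\varepsilon).
\end{equation*}
By the smoothness of the Euler-Lagrange flow $\Phi_t^L$, each curve $t\mapsto\rho(t,s)$ is an extremal, $\rho(t,0)=\rho(t)$, and the variation field $\widetilde J(t)=\partial_s\rho(t,0)$ is, by the ``only if'' part just established, a solution of \eqref{eq:Jacobi}. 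A direct computation at $t=a$ shows $\widetilde J(a)=J_0$ and $\dot{\widetilde J}(a)=\dot J_0$, so uniqueness for \eqref{eq:Jacobi} yields $\widetilde J\equiv J$, and $J$ is realized as a Jacobi field.

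The only real subtlety is the construction of the initial curve $\gamma$ with the prescribed first-order data in the $TM$-chart, so that both $\widetilde J(a)=J_0$ and $\dot{\widetilde J}(a)=\dot J_0$ come out right; this is the main place where one must be careful with how the chart on $TM$ interacts with the identification of $\dot J$ as a covariant object in the base. Everything else is a routine chain-rule computation together with the linear ODE theory applied to \eqref{eq:Jacobi}.
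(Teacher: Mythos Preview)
Your proposal is correct and follows essentially the same approach as the paper: both directions are handled identically, with the ``only if'' by differentiating the Euler--Lagrange equation in $s$, and the ``if'' by pushing a curve of initial data in $TM$ forward under $\Phi^L$ and invoking uniqueness for the linear Jacobi ODE. The only cosmetic difference is that the paper packages the initial curve in $TM$ as a pair $(\alpha(s),V(s))$ and verifies the matching of $\dot J(a)$ via the covariant identity $\nabla_{\partial_t\rho}\partial_s\rho=\nabla_{\partial_s\rho}\partial_t\rho$, whereas you do the equivalent check in chart coordinates.
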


\begin{Pro}\label{pro:Jacobi_formula}
Suppose $J:[a,b]\to TM$ is a Jacobi field along $\rho:[a,b]\to M$ such that $J(0)=0$. Then 
\begin{align*}
J(s)=\left.\partial_z\right|_{z=0}\pi\circ\Phi_{s-a}^L(\rho(a),\dot{\rho}(a)+z \nabla_{\dot{\rho}(a)}J).
\end{align*}
\end{Pro}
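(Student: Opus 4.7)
The plan is to realize the given Jacobi field $J$ as the variational field of an explicitly constructed one-parameter family of extremals, and then invoke uniqueness for the Jacobi equation. Reading the hypothesis as $J(a)=0$ (consistent with the flow time $s-a$ appearing on the right-hand side), set $w:=\nabla_{\dot\rho(a)}J\in T_{\rho(a)}M$ and define
\begin{align*}
\tilde\rho(s,z):=\pi\circ\Phi_{s-a}^L\bigl(\rho(a),\dot\rho(a)+zw\bigr),\qquad s\in[a,b],\ z\in(-\varepsilon,\varepsilon).
\end{align*}
For each fixed $z$, the curve $\tilde\rho(\cdot,z)$ is, by construction, the base projection of an integral curve of the Euler--Lagrange flow, hence an extremal. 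So $\tilde\rho$ is a genuine variation of extremals along $\rho$, and its variational field $\tilde J(s):=\partial_z|_{z=0}\tilde\rho(s,z)$ is a Jacobi field along $\rho$ by definition and by Proposition \ref{pro:Jabobi_field}.

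Next I would verify that $\tilde J$ matches $J$ in its initial data. At $s=a$ we have $\tilde\rho(a,z)=\rho(a)$ independent of $z$, so $\tilde J(a)=0$. Exchanging $\partial_s$ and $\partial_z$,
\begin{align*}
\frac{d}{ds}\tilde J(s)=\partial_z\bigg|_{z=0}\partial_s\tilde\rho(s,z)=\partial_z\bigg|_{z=0}\dot{\tilde\rho}(s,z),
\end{align*}
and at $s=a$ the velocity $\dot{\tilde\rho}(a,z)$ is the tangent component of $\Phi_0^L(\rho(a),\dot\rho(a)+zw)=(\rho(a),\dot\rho(a)+zw)$, which equals $\dot\rho(a)+zw$. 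Hence $\frac{d}{ds}\tilde J(a)=w$ in any local chart. Because $\tilde J(a)=0$, the Christoffel correction vanishes at $s=a$, so $\nabla_{\dot\rho(a)}\tilde J=w=\nabla_{\dot\rho(a)}J$.

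Both $J$ and $\tilde J$ therefore solve the Jacobi equation \eqref{eq:Jacobi} along $\rho$ with the same initial position and initial velocity. Since \eqref{eq:Jacobi} is a linear second-order ODE in local coordinates whose leading coefficient $L_{vv}(\rho,\dot\rho)$ is invertible by the Tonelli convexity hypothesis, standard uniqueness for linear ODEs forces $\tilde J\equiv J$ on $[a,b]$, which is the claimed formula. The one subtle point in the argument is translating between the intrinsically Riemannian covariant derivative appearing in the statement and the ordinary time derivative that arises from $\partial_z\partial_s\tilde\rho$; this is handled cleanly by the fact that the two coincide at a point where the vector field being differentiated vanishes, so no global reconciliation is needed.
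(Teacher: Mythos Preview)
Your proposal is correct and follows essentially the same route as the paper: construct the variation through extremals with initial velocity $\dot\rho(a)+zw$, verify that its variational field has the same initial data $(\tilde J(a),\nabla_{\dot\rho(a)}\tilde J)=(0,w)$ as $J$, and conclude by uniqueness for the second-order linear Jacobi equation. The only cosmetic difference is that the paper handles the initial covariant derivative via the torsion-free identity $\nabla_{\partial_s\rho}\partial_z\rho=\nabla_{\partial_z\rho}\partial_s\rho$ (then differentiating $\dot\rho(a)+zw$ inside the fixed tangent space $T_{\rho(a)}M$), whereas you argue in coordinates that the Christoffel correction vanishes because $\tilde J(a)=0$; both are equivalent.
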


\begin{defn}    	
If $L: TM\to\R$ is a Tonelli Lagrangian and $\rho:[a,b]\to M$ is an extremal. The point $(\rho(b),\dot{\rho}(b))$ is said to be conjugate to $(\rho(a),\dot{\rho}(a))$ if there exists a nonzero Jacobi field $J$ along $\rho$ such that
\begin{align*}
\rho(a)=\rho(b)=0.
\end{align*}
\end{defn}

In general, a Tonelli Lagrangian $ L:TM\to\R$ is not necessarily symmetrical. So one can define the reverse of $L$ by $\breve{L}(x,v)=L(x,-v)$. Simultaneously, one gets the reverse Hamiltonian $\breve{H}(x,p):=H(x,-p)$ which is exactly the Hamiltonian associated to $\breve{L}$. The next proposition clarifies the relation of conjugacy with respect to $L$ and $\breve{L}$ respectively.

\begin{Pro}\label{pro:reverse}
Let $L:TM\to\R$ be a Tonelli Lagrangian and let $\breve{L}(x, v):=L(x,-v)$ for all $(x,v)\in TM$. If $\rho:[a,b]\to M $ is an extremal, then, $(\rho(b),\dot{\rho}(b))$ is conjugate to $(\rho(a),\dot{\rho}(a))$ with respect to $L$ if and only if $(\rho(a),-\dot{\rho}(a))$ is conjugate to $(\rho(b),-\dot{\rho}(b))$ with respect to $\breve {L}$.
\end{Pro}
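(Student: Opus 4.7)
The plan is to prove Proposition 2.6 by explicit time-reversal. Define the reparametrization $\tau(t) = a+b-t$ and set $\breve{\rho}(t) := \rho(\tau(t))$ for $t\in[a,b]$. The strategy is to set up a bijection between Jacobi fields along $\rho$ (for $L$) vanishing at both endpoints and Jacobi fields along $\breve{\rho}$ (for $\breve{L}$) vanishing at both endpoints, via $\breve{J}(t) := J(\tau(t))$, and then read off the conjugacy statement from the endpoint values of $\breve{\rho}$, $\dot{\breve{\rho}}$.

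First I would verify that $\breve{\rho}$ is an extremal for $\breve{L}$. A chain-rule computation gives $\dot{\breve{\rho}}(t) = -\dot{\rho}(\tau(t))$. Using the identities $\breve{L}_v(x,v) = -L_v(x,-v)$ and $\breve{L}_x(x,v) = L_x(x,-v)$ together with the substitution $s=\tau(t)$ (so $ds/dt=-1$), one checks
\begin{align*}
\frac{d}{dt}\breve{L}_v(\breve{\rho}(t),\dot{\breve{\rho}}(t))
&= -\frac{d}{dt}L_v(\rho(\tau(t)),\dot{\rho}(\tau(t))) \\
&= \frac{d}{ds}L_v(\rho(s),\dot{\rho}(s))\Big|_{s=\tau(t)} \\
&= L_x(\rho(\tau(t)),\dot{\rho}(\tau(t))) = \breve{L}_x(\breve{\rho}(t),\dot{\breve{\rho}}(t)),
\end{align*}
so $\breve{\rho}$ satisfies \eqref{E-L} for $\breve{L}$. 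In particular $(\breve{\rho}(a),\dot{\breve{\rho}}(a)) = (\rho(b),-\dot{\rho}(b))$ and $(\breve{\rho}(b),\dot{\breve{\rho}}(b)) = (\rho(a),-\dot{\rho}(a))$.

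Next I would handle the Jacobi fields variationally, which is cleaner than wrestling with \eqref{eq:Jacobi} directly after the sign flip. By definition, $J$ is a Jacobi field along $\rho$ iff there is a $C^2$ variation $\rho(t,s)$ by $L$-extremals with $\rho(t,0)=\rho(t)$ and $J(t)=\partial_s\rho(t,s)|_{s=0}$. The first step shows that $\breve{\rho}(t,s) := \rho(\tau(t),s)$ is then a $C^2$ variation of $\breve{L}$-extremals along $\breve{\rho}$, and $\partial_s \breve{\rho}(t,s)|_{s=0} = J(\tau(t))$. Hence $\breve{J}(t) := J(\tau(t))$ is a Jacobi field along $\breve{\rho}$ for $\breve{L}$; the reverse direction is symmetric because $\breve{\breve{L}} = L$ and $\tau\circ\tau=\mathrm{id}$.

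Finally, the map $J\mapsto \breve{J}$ is a linear bijection between Jacobi fields along $\rho$ and $\breve{\rho}$, with $\breve{J}\not\equiv 0$ iff $J\not\equiv 0$, and $\breve{J}(a)=J(b)$, $\breve{J}(b)=J(a)$. Thus a nonzero Jacobi field along $\rho$ with $J(a)=J(b)=0$ exists iff a nonzero Jacobi field along $\breve{\rho}$ with $\breve{J}(a)=\breve{J}(b)=0$ exists, which is exactly the conjugacy statement in the proposition given the endpoint identifications computed above. There is no real obstacle here; the only place to be careful is matching the sign conventions when reading off $(\breve{\rho}(a),\dot{\breve{\rho}}(a))$ and $(\breve{\rho}(b),\dot{\breve{\rho}}(b))$, so that the conjugate endpoints are swapped and the velocities are negated as stated.
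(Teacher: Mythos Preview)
Your proof is correct and follows essentially the same time-reversal strategy as the paper: define $\breve{\rho}(t)=\rho(a+b-t)$ and transport Jacobi fields via $\breve{J}(t)=J(a+b-t)$. The only difference is cosmetic---the paper verifies that $\breve{J}$ is a Jacobi field for $\breve{L}$ by directly checking the Jacobi equation \eqref{eq:Jacobi}, whereas you use the variational definition (which is in fact the paper's primary definition of Jacobi field, so your route is arguably cleaner).
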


\begin{Pro}\label{pro:2nd_var}
Given a mechanical Lagrangian $L(x, v)=\frac 12g_x(v,v)-f(x)$ on a connected closed manifold $(M,g)$. Let $\omega$ be a closed 1-form on $M$ and let $X:=\omega^{\#}$ be its corresponding vector field. Then, any minimizer $\rho:[0,t]\to M$ of $A_t(\rho(0),\rho(t))$ associated to the Lagrangian
\begin{align*}
L_\omega(x,v):=L(x,v)-\omega(v)=L(x,v)-g_x(X, v)
\end{align*}
solves
\begin{equation}\label{eq:second_law}
\nabla_{\dot{\rho}}\dot{\rho}=-\nabla f,
\end{equation}
and each Jacobi field $J$ along $\rho$ satisfies
\begin{equation}\label{Jacobi equation}
\nabla_{\dot{\rho}}\nabla_{\dot{\rho}}J+R(J,\dot{\rho})\dot{\rho}+\operatorname{Hess} f(J)=0.
\end{equation}
\end{Pro}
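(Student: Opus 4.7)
The plan is to split the proof into two independent pieces. First, because $\omega$ is closed, the Euler--Lagrange equations of $L$ and $L_\omega = L-\omega_i v^i$ coincide: in any local chart the difference between the two Euler--Lagrange expressions reduces to $(d\omega)_{ij}\dot\rho^j$, which vanishes by hypothesis. An analogous direct computation shows that the local Jacobi equation of Proposition~\ref{pro:Jabobi_field} applied to $L_\omega$ reduces to the one for $L$, since the extra contributions involve only the components of $d\omega$ and its first derivatives. This reduces the problem to computing the two equations for the mechanical Lagrangian $L = \tfrac12 g_x(v,v) - f(x)$.

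For \eqref{eq:second_law}, I would expand the Euler--Lagrange equation of $L$ in a local chart and absorb the Christoffel symbols into the covariant derivative, producing $\ddot\rho^l + \Gamma^l_{jk}\dot\rho^j\dot\rho^k = -(\nabla f)^l$, i.e.\ $\nabla_{\dot\rho}\dot\rho = -\nabla f$. For \eqref{Jacobi equation}, the cleanest derivation is coordinate-free: pick a smooth variation $\rho(t,s)$ of $L$-extremals with $\rho(\cdot,0)=\rho$ and $J(t)=\partial_s\rho(t,0)$, and apply $\nabla_{\partial_s}$ to the extremal equation $\nabla_{\partial_t\rho}\partial_t\rho = -\nabla f(\rho)$ for each $s$. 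Using the torsion-free identity $\nabla_{\partial_s}\partial_t\rho = \nabla_{\partial_t}\partial_s\rho$ together with the curvature relation $\nabla_{\partial_s}\nabla_{\partial_t}V - \nabla_{\partial_t}\nabla_{\partial_s}V = R(\partial_s\rho,\partial_t\rho)V$, the left-hand side at $s=0$ becomes
\[
\nabla_{\partial_s}\nabla_{\partial_t}\partial_t\rho\,\big|_{s=0} \;=\; \nabla_{\dot\rho}\nabla_{\dot\rho}J + R(J,\dot\rho)\dot\rho,
\]
while the right-hand side evaluates to $-\nabla_J\nabla f = -\operatorname{Hess} f(J)$ by the $(1,1)$-definition of the Hessian. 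Equating the two gives \eqref{Jacobi equation}.

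The work here is routine second-variation bookkeeping, so there is no substantive obstacle; the only delicate point is the sign and index bookkeeping in the curvature term, since conventions for $R$ and for $\operatorname{Hess}$ vary between references, and one must be careful that the torsion-free and curvature identities are applied to vector fields along the two-parameter map $\rho(t,s)$, not along a single curve. It is worth stressing that the closedness of $\omega$ is used only at the reduction step, so the gyroscopic/magnetic-type term $-\omega(v)$ contributes nothing either to Newton's law \eqref{eq:second_law} or to the Jacobi equation \eqref{Jacobi equation}.
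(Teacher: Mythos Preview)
Your proposal is correct and follows essentially the same route as the paper: the coordinate reduction of the Euler--Lagrange equation for $L_\omega$ to $\ddot\rho^k + \Gamma^k_{ij}\dot\rho^i\dot\rho^j = -g^{km}\partial_m f$ appears verbatim in the paragraph following the proposition, and your coordinate-free derivation of \eqref{Jacobi equation} from a variation of extremals via the torsion-free and curvature identities is exactly the paper's appendix argument. One small remark: your separate reduction of the local Jacobi equation for $L_\omega$ to that of $L$ is correct but redundant, since once you have established the $\omega$-independent equation $\nabla_{\dot\rho}\dot\rho=-\nabla f$, your subsequent variation-of-extremals computation already proceeds entirely from it and never sees $\omega$.
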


Equation \eqref{eq:second_law} is equivalent to the Euler-Lagrange equation \eqref{E-L}. Indeed, we have $L(x,v)=\frac{1}{2} g_{i j}(x)v^i v^j-f(x)-\omega_lv^l$ in local chart. Here and after we use the Einstein summation convention. Then, we have
\begin{align*}
&\,L_{v_k}=g_{k i}(x)v^i-\omega_k,\\
&\,L_{x_k}=\frac 12\frac{\partial g_{ij}}{\partial x_k}(x)v^iv^j-\frac{\partial f}{\partial x_k}(x)-\frac{\partial\omega_l}{\partial x_k}(x)v^l.
\end{align*}
The Euler-Lagrange equation \eqref{E-L} tells us
\begin{align*}
\frac{d}{d t}L_{v_k}(\rho(t),\dot{\rho}(t))=L_{x_k}(\rho(t),\dot{\rho}(t))
\end{align*}
which yields to
\begin{align*}
\frac 12\frac{\partial g_{ij}}{\partial x_k}(\rho)\dot{\rho}^i\dot{\rho}^j-\frac{\partial f}{\partial x_k}(\rho)-\frac{\partial\omega_l}{\partial x_k}(\rho)\dot{\rho}^l
=&\,\frac{\partial g_{ki}}{\partial x_j}(\rho)\dot{\rho}^i\dot{\rho}^j+g_{ki}(\rho)\ddot{\rho}^i-\frac{\partial\omega_k}{\partial x_j}(\rho)\dot{\rho}^j\\
=&\,\frac 12\frac{\partial g_{ki}}{\partial x_j}(\rho)\dot{\rho}^i\dot{\rho}^j+\frac 12\frac{\partial g_{kj}}{\partial x_i}(\rho)\dot{\rho}^i\dot{\rho}^j+g_{ki}(\rho)\ddot{\rho}^i-\frac{\partial\omega_k}{\partial x_j}(\rho) \dot{\rho}^j.
\end{align*}
Since $ \omega $ is closed, we have
\begin{align*}
\frac{\partial\omega_l}{\partial x_k}(\rho)\dot{\rho}^l=\frac{\partial\omega_j}{\partial x_k}(\rho)\dot{\rho}^j=\frac{\partial \omega_k}{\partial x_j}(\rho)\dot{\rho}^j.
\end{align*} 
This means that
\begin{align*}
\frac 12\frac{\partial g_{i j}}{\partial x_k}(\rho)\dot{\rho}^i\dot{\rho}^j-\frac{\partial f}{\partial x_k}(\rho)=\frac 12\frac{\partial g_{ki}}{\partial x_j}(\rho) \dot{\rho}^i\dot{\rho}^j+\frac 12\frac{\partial g_{kj}}{\partial x_i}(\rho)\dot{\rho}^i \dot{\rho}^j+g_{ki}(\rho)\ddot{\rho}^i.
\end{align*}
Hence, we obtain
\begin{equation}\label{coordinate equation} 
\begin{split}
\ddot{\rho}^k=&\,-g^{km}\frac{1}{2}\left\{\frac{\partial g_{mi}}{\partial x_j}\dot{\rho}^i \dot{\rho}^j+\frac{\partial g_{m j}}{\partial x_i}  \dot{\rho}^i\dot{\rho}^j-\frac{\partial g_{ij}}{\partial x_m}\dot{\rho}^i\dot{\rho}^j\right\}-g^{km}\frac{\partial f}{\partial x_m}\\
=&\,-\Gamma_{ij}^k\dot{\rho}^i\dot{\rho}^j-g^{km}\frac{\partial f}{\partial x_m}, 
\end{split}
\end{equation}
where
\begin{align*}
\Gamma_{ij}^k=g^{km}\frac 12\left\{\frac{\partial g_{mj}}{\partial x_i}+\frac{\partial  g_{mi}}{\partial x_j}-\frac{\partial g_{ij}}{\partial x_m}\right\}
\end{align*}
are Christoffel symbols. Notice that equation \eqref{eq:second_law} is also equivalent to equation \eqref{coordinate equation} in local chart. Therefore, equation \eqref{eq:second_law} is equivalent to the Euler-Lagrange equation \eqref{E-L}.

\begin{Lem}\label{lem:not_conjugate}
Suppose $L:TM\to\R$ is a Tonelli Lagrangian on a closed connected Riemannian manifold $(M,g)$. Let $\rho:[a,b]\to M$ be an extremal. Then $(\rho(b),\dot{\rho}(b))$ is not conjugate to $(\rho(a),\dot{\rho}(a))$, $s>t$ if and only if $d_{\dot{\rho}(a)}(\pi\circ\Phi_{b-a}^L)$ is non-degenerate.
\end{Lem}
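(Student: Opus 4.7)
The plan is to directly translate the conjugacy criterion into a statement about the kernel of the differential of the flow, using Proposition \ref{pro:Jacobi_formula} as the bridge between Jacobi fields and flow derivatives.

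First, I would fix notation and record the parametrization of Jacobi fields vanishing at $a$. The Jacobi equation \eqref{eq:Jacobi} is a linear second-order ODE along $\rho$, so a Jacobi field $J$ along $\rho$ is uniquely determined by the pair $(J(a),\nabla_{\dot\rho(a)}J)\in T_{\rho(a)}M\times T_{\rho(a)}M$. In particular, the space of Jacobi fields with $J(a)=0$ is an $n$-dimensional vector space, parametrized by the initial covariant velocity $w=\nabla_{\dot\rho(a)}J\in T_{\rho(a)}M$; denote the unique such Jacobi field by $J_w$.

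Next, I would introduce the map
\begin{align*}
\Psi:T_{\rho(a)}M\to M,\qquad \Psi(v):=\pi\circ\Phi^L_{b-a}(\rho(a),v).
\end{align*}
Since $\Psi(\dot\rho(a))=\rho(b)$, the differential $d\Psi_{\dot\rho(a)}$ is a linear map from $T_{\rho(a)}M$ to $T_{\rho(b)}M$ between spaces of the same dimension, so non-degeneracy is equivalent to injectivity. Applying Proposition \ref{pro:Jacobi_formula} to $J=J_w$ at time $s=b$ yields
\begin{align*}
d\Psi_{\dot\rho(a)}(w)=\left.\partial_z\right|_{z=0}\pi\circ\Phi^L_{b-a}(\rho(a),\dot\rho(a)+zw)=J_w(b)
\end{align*}
for every $w\in T_{\rho(a)}M$. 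Thus $d\Psi_{\dot\rho(a)}$, viewed as a linear map in $w$, coincides with the evaluation-at-$b$ map $w\mapsto J_w(b)$.

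From here the equivalence is immediate: $d\Psi_{\dot\rho(a)}$ is degenerate if and only if there exists $w\neq 0$ with $J_w(b)=0$, i.e.\ a nonzero Jacobi field along $\rho$ vanishing at both endpoints, which by the definition of conjugacy means $(\rho(b),\dot\rho(b))$ is conjugate to $(\rho(a),\dot\rho(a))$. Contrapositively, $(\rho(b),\dot\rho(b))$ is not conjugate to $(\rho(a),\dot\rho(a))$ if and only if $d_{\dot\rho(a)}(\pi\circ\Phi^L_{b-a})$ is non-degenerate. There is no serious obstacle here: the entire content of the lemma is packaged inside Proposition \ref{pro:Jacobi_formula}, and the only mild subtlety is bookkeeping — making sure the identification of $T_{\dot\rho(a)}(T_{\rho(a)}M)$ with $T_{\rho(a)}M$ (used implicitly to read off $d\Psi_{\dot\rho(a)}$ as acting on $w$) is justified and consistent with the conventions fixed by the Jacobi formula.
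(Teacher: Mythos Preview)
Your proof is correct and follows essentially the same route as the paper: both use Proposition \ref{pro:Jacobi_formula} to identify $d_{\dot\rho(a)}(\pi\circ\Phi^L_{b-a})(w)$ with $J_w(b)$, and then read off the equivalence between degeneracy of the differential and the existence of a nonzero Jacobi field vanishing at both endpoints. The paper makes explicit the small point that $w\neq 0$ iff $J_w\not\equiv 0$ (by uniqueness for the Jacobi ODE), which you have implicitly folded into your parametrization remark.
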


Now we come to the connection between the conjugate points and the differentiability of $A_t(x,\cdot)$.

\begin{Pro}\label{pro:before_conjugate}
Suppose $\rho:[0,t]\to M$ is a minimal curve for $A_t(x,y)$ and $(\rho(t),\rho(t))$ is not conjugate to $(\rho(0),\dot{\rho}(0))$. Then $A_t(x,\cdot)$ is of class $C^r$ in a neighborhood $U$ of $\rho(t)$ provided $A_t(x,\cdot)$ is differentiable at $\rho(t)$. Moreover,
\begin{align*}
\pi\circ\Phi_t^L:(\pi\circ\Phi_t^L)^{-1}(U)\to U
\end{align*}
is a $C^{r-1}$ diffeomorphism and the curve $\rho_z(s):=\pi\circ\Phi_s^L(x,v_z)$ is the unique minimal curve  for $A_t(x, z)$ where $T_xM\ni v_z=(\pi\circ\Phi_t^L)^{-1}(z)$.
\end{Pro}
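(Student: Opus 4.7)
The plan is to combine the Inverse Function Theorem (via the non-conjugate hypothesis and Lemma \ref{lem:not_conjugate}) with a compactness/uniqueness argument that uses the hypothesis on differentiability of $A_t(x,\cdot)$ at $\rho(t)$.

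\emph{Step 1 (Local diffeomorphism).} By Lemma \ref{lem:not_conjugate}, the non-conjugacy of $(\rho(t),\dot\rho(t))$ to $(\rho(0),\dot\rho(0))$ is equivalent to the non-degeneracy of $d_{\dot\rho(0)}(\pi\circ\Phi^L_{t})$ as a map $T_xM\to T_{\rho(t)}M$. Since $L\in C^r$, the Euler--Lagrange flow is of class $C^{r-1}$, so $v\mapsto\pi\circ\Phi^L_{t}(x,v)$ is $C^{r-1}$. Applying the Inverse Function Theorem, one obtains an open neighborhood $V\subset T_xM$ of $\dot\rho(0)$ and an open neighborhood $U_0\subset M$ of $\rho(t)$ on which $\pi\circ\Phi^L_t:V\to U_0$ is a $C^{r-1}$ diffeomorphism. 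For $z\in U_0$ set $v_z:=(\pi\circ\Phi^L_t|_V)^{-1}(z)$ and define the candidate extremal $\rho_z(s):=\pi\circ\Phi^L_s(x,v_z)$, so that $\rho_z$ depends $C^{r-1}$ on $z$.

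\emph{Step 2 (Regularity of the candidate action).} Define $\psi(z):=\int_0^t L(\rho_z(s),\dot\rho_z(s))\,ds$ on $U_0$. Since $\rho_z$ is an extremal with fixed left endpoint $x$ and right endpoint $z$, the standard first variation computation yields $d_z\psi(z)=L_v(\rho_z(t),\dot\rho_z(t))=L_v(z,\dot\rho_z(t))$. As $(\rho_z(t),\dot\rho_z(t))=\Phi^L_t(x,v_z)$ is $C^{r-1}$ in $z$ and $L_v$ is $C^{r-1}$, we conclude $d\psi\in C^{r-1}$, hence $\psi\in C^r(U_0)$. By construction $\rho_z\in\Gamma^t_{x,z}$, so $A_t(x,z)\leqslant\psi(z)$ for all $z\in U_0$.

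\emph{Step 3 (Uniqueness and the reverse inequality near $\rho(t)$).} This is where the differentiability hypothesis enters; it is the core of the argument. By Proposition \ref{pro:differential}(2), differentiability of $A_t(x,\cdot)$ at $\rho(t)$ forces $\rho$ to be the \emph{unique} minimal curve for $A_t(x,\rho(t))$. Suppose for contradiction that the conclusion fails: there exist $z_n\to\rho(t)$ and minimizers $\sigma_n\in\Gamma^t_{x,z_n}$ with $\sigma_n\neq\rho_{z_n}$. By Proposition \ref{pro:differential}(3), the curves $(\sigma_n,\dot\sigma_n)$ lie in a fixed compact subset of $TM$, so up to extracting a subsequence $\dot\sigma_n(0)\to w\in T_xM$. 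By continuity of $\Phi^L$ and of $L$, the limit $\sigma(s):=\pi\circ\Phi^L_s(x,w)$ lies in $\Gamma^t_{x,\rho(t)}$ and realizes the value $A_t(x,\rho(t))=\lim_n A_t(x,z_n)$; thus $\sigma$ is a minimizer, and uniqueness forces $\sigma=\rho$ and $w=\dot\rho(0)$. For all sufficiently large $n$ this places $\dot\sigma_n(0)\in V$, so that $\sigma_n=\rho_{z_n}$, a contradiction.

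\emph{Step 4 (Conclusion).} Shrinking $U_0$ to a neighborhood $U$ of $\rho(t)$ on which the argument of Step 3 applies uniformly, we obtain: for every $z\in U$, $\rho_z$ is the unique minimal curve for $A_t(x,z)$, and therefore $A_t(x,z)=\psi(z)$. Since $\psi\in C^r(U)$, it follows that $A_t(x,\cdot)\in C^r(U)$. The diffeomorphism statement is the restriction of Step 1 to the component of $(\pi\circ\Phi^L_t)^{-1}(U)$ containing $(x,\dot\rho(0))$. The main obstacle is Step 3: one must rule out the existence of competing minimizers arbitrarily close to $\rho$, and the only available leverage is the forced uniqueness at the single point $\rho(t)$, pushed to a neighborhood by compactness of minimizers from Proposition \ref{pro:differential}(3).
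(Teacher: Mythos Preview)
Your proposal is correct and follows essentially the same route as the paper: both use Lemma \ref{lem:not_conjugate} and the Inverse Function Theorem to get a local $C^{r-1}$ diffeomorphism on a neighborhood of $\dot\rho(0)$, then run the identical compactness/uniqueness contradiction (via Proposition \ref{pro:differential}(3) and the differentiability hypothesis) to force all nearby minimizers into that neighborhood, and finally read off $C^r$ regularity from $d_yA_t(x,z)=L_v(\Phi_t^L(x,v_z))$. One trivial correction: in Step 3 you should cite Proposition \ref{pro:differential}(1), not (2), since you are using differentiability in the second variable.
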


Now we introduce the definition of the cut points which play an important role in calculus of variations.

\begin{defn}
Suppose $\rho:[0,t]\to M$ is a minimal curve for $A_t(x,y)$. Then, $\rho(t)$ is a cut point of $\rho(0)$ if the curve $\bar{\rho}(s)=\pi\circ\Phi_s^L(x,\dot{\rho}(0))$, $s\in[0,\tau]$, is not a minimal curve for $A_\tau(x, \bar{\rho}(\tau))$ for any $\tau>t$.
\end{defn}

\begin{Lem}\label{lem:cut}
If $\rho:[0,t]\to M$ is a minimal curve for $A_t(x,y)$ and $\rho(t)$ is a cut point of $\rho(0)$, then either
\begin{enumerate}[\rm (i)]
\item $(\rho(t),\dot{\rho}(t))$ is conjugate to $(\rho(0),\dot{\rho}(0))$, or
\item there exists another minimizer $\tilde{\rho}:[0, t] \rightarrow M$ of $A_t(x, y)$.
\end{enumerate}
\end{Lem}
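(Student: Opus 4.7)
The plan is a proof by contradiction: assume $(\rho(t),\dot{\rho}(t))$ is \emph{not} conjugate to $(\rho(0),\dot{\rho}(0))$ and $\rho$ is the unique minimizer of $A_t(x,y)$, and derive a contradiction with the cut-point hypothesis. Write $\bar{\rho}(s):=\pi\circ\Phi_s^L(x,\dot{\rho}(0))$. By the definition of cut point, for any sequence $\tau_n \downarrow t$ the restriction $\bar{\rho}|_{[0,\tau_n]}$ fails to be a minimizer of $A_{\tau_n}(x,\bar{\rho}(\tau_n))$; classical Tonelli theory then produces a smooth minimizer $\sigma_n:[0,\tau_n]\to M$ from $x$ to $\bar{\rho}(\tau_n)$ whose action is strictly smaller than that of $\bar{\rho}|_{[0,\tau_n]}$.

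Next I would extract a subsequential limit of the $\sigma_n$. By Proposition \ref{pro:differential}(3) the orbits $(\sigma_n,\dot{\sigma}_n)$ lie in a fixed compact subset of $TM$; in particular $\{\dot{\sigma}_n(0)\}$ is bounded, so up to a subsequence $\dot{\sigma}_n(0)\to v_\infty$ for some $v_\infty\in T_xM$. Define $\sigma_\infty(s):=\pi\circ\Phi_s^L(x,v_\infty)$ on $[0,t]$. Continuous dependence of the Euler--Lagrange flow on the initial datum, together with $\sigma_n(\tau_n)=\bar{\rho}(\tau_n)\to y$ and $\tau_n\to t$, yields $\sigma_\infty(t)=y$. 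Since $\int_0^{\tau_n}L(\sigma_n,\dot{\sigma}_n)\,ds$ is bounded above by the action of $\bar{\rho}|_{[0,\tau_n]}$, which tends to $A_t(x,y)$, the limit $\sigma_\infty$ has action $\leqslant A_t(x,y)$ and is therefore itself a minimizer of $A_t(x,y)$.

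By the uniqueness assumption $\sigma_\infty=\rho$, so $v_\infty=\dot{\rho}(0)$. This is where non-conjugacy enters, via Lemma \ref{lem:not_conjugate}: the differential $d_{\dot{\rho}(0)}(\pi\circ\Phi_t^L)$ is a linear isomorphism. Because invertibility of a linear map is an open condition and the flow depends continuously on time, there exist a neighborhood $V\subset T_xM$ of $\dot{\rho}(0)$ and an index $n_0$ such that for every $n\geqslant n_0$ the map $v\mapsto\pi\circ\Phi_{\tau_n}^L(x,v)$ is a diffeomorphism of $V$ onto an open subset of $M$. For large $n$ both $\dot{\sigma}_n(0)$ and $\dot{\rho}(0)$ lie in $V$, and both $\sigma_n$ and $\bar{\rho}|_{[0,\tau_n]}$ connect $x$ to $\bar{\rho}(\tau_n)$ in time $\tau_n$; injectivity forces $\dot{\sigma}_n(0)=\dot{\rho}(0)$, hence $\sigma_n=\bar{\rho}|_{[0,\tau_n]}$, contradicting the strict action inequality.

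The step I expect to require the most care is the uniformity in the last paragraph, namely producing a single neighborhood $V$ of $\dot{\rho}(0)$ on which $v\mapsto\pi\circ\Phi_{\tau_n}^L(x,v)$ is a diffeomorphism for \emph{every} sufficiently large $n$. Since the linearized flow is jointly continuous in $(v,s)$ and non-degeneracy is an open condition, a quantitative inverse function theorem supplies such a $V$; the remaining issue is merely to guarantee that both $\dot{\sigma}_n(0)$ and $\dot{\rho}(0)$ lie in $V$ for large $n$, which follows from $v_\infty=\dot{\rho}(0)$.
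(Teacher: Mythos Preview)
Your proof is correct and follows essentially the same strategy as the paper's: extend $\rho$ past time $t$, take minimizers $\sigma_n$ for the extended problems, extract a limiting initial velocity, identify it with $\dot{\rho}(0)$ via the uniqueness assumption, and then use non-conjugacy (Lemma \ref{lem:not_conjugate}) plus the inverse function theorem in the joint variable $(s,v)$ to force $\sigma_n=\bar{\rho}|_{[0,\tau_n]}$ for large $n$. The only cosmetic difference is that the paper phrases the argument as a dichotomy on the limit velocity $v$ (either $v\neq\dot{\rho}(0)$, giving (ii), or $v=\dot{\rho}(0)$, and then non-conjugacy is ruled out), whereas you assume the negations of (i) and (ii) from the start; the logic is identical.
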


\begin{The}\label{thm:conjugate_weak_KAM}
Let $L:TM\to\R$ be a Tonelli Lagrangian on a closed connected Riemannian manifold $(M, g)$ and let $u$ be a weak KAM solution of \eqref{eq:HJ_wk}. Suppose that $x\in M$ and $\rho:(-\infty,0]\to M$ is a $(u,L,c[L])$-calibrated curve ending at $\rho(0)=x$.
\begin{enumerate}[\rm (1)]
\item $(\rho(0),\dot{\rho}(0))$ is not conjugate to $(\rho(-s),\dot{\rho}(-s))$ for any $s>0$.
\item For every $\tau>0$, $A_\tau(\rho(-\tau),\cdot)$ is of $C^2$ in a neighborhood of $\rho(0)=x$ and $\rho(0)$ is not a cut point of $\rho(-\tau)$.
\end{enumerate}
\end{The}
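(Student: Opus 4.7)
The approach is to establish (1) by a second-variation contradiction on a suitable backward extension of $\rho$, and then deduce (2) by combining Proposition \ref{pro:before_conjugate} with Lemma \ref{lem:cut}.

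For (1), suppose for contradiction that $(\rho(0),\dot\rho(0))$ is conjugate to $(\rho(-s_0),\dot\rho(-s_0))$ for some $s_0>0$, so there is a nontrivial Jacobi field $J$ along $\rho|_{[-s_0,0]}$ with $J(-s_0)=J(0)=0$; uniqueness for the Jacobi initial value problem forces $\nabla_{\dot\rho(-s_0)}J\neq 0$. Fix $\epsilon>0$. Because $\rho$ is $u$-calibrated on $(-\infty,0]$, the restriction $\rho|_{[-s_0-\epsilon,0]}$ is a minimizer of the action with fixed endpoints, so the index form $I$ on proper variation fields along it is positive semidefinite. Let $Y$ be the piecewise smooth proper field on $[-s_0-\epsilon,0]$ defined by $Y\equiv 0$ on $[-s_0-\epsilon,-s_0]$ and $Y=J$ on $[-s_0,0]$. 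Integration by parts against the Jacobi equation gives $I[Y,Y]=0$, and the polarization inequality $I[Y+tZ,Y+tZ]\geqslant 0$ then forces $I[Y,Z]=0$ for every proper variation field $Z$. However, for any smooth proper $Z$ supported near $-s_0$, integration by parts produces a boundary term at the corner $-s_0$ that reduces, by strict convexity of $L$ in $v$, to a positive definite pairing of $\nabla_{\dot\rho(-s_0)}J$ with $Z(-s_0)$; choosing $Z(-s_0)$ parallel to $\nabla_{\dot\rho(-s_0)}J$ makes this term nonzero, contradicting $I[Y,Z]=0$. This Tonelli second-variation computation is the content of the paper's appendix on the index form.

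For (2), fix $\tau>0$. Calibration makes $\rho|_{[-\tau,0]}$ a minimizer for $A_\tau(\rho(-\tau),\rho(0))$, and by part (1) its endpoints are non-conjugate. Proposition \ref{pro:before_conjugate} then provides a neighborhood $U$ of $\rho(0)$ such that $\pi\circ\Phi_\tau^L$ is a $C^{r-1}$ diffeomorphism from $(\pi\circ\Phi_\tau^L)^{-1}(U)$ onto $U$, and for each $z\in U$ the extremal $\rho_z(s)=\pi\circ\Phi_s^L(\rho(-\tau),v_z)$ is the unique minimizer for $A_\tau(\rho(-\tau),z)$. Specializing to $z=\rho(0)$ yields uniqueness of the minimizer at $\rho(0)$, hence Proposition \ref{pro:differential}(1) gives differentiability of $A_\tau(\rho(-\tau),\cdot)$ at $\rho(0)$, and Proposition \ref{pro:before_conjugate} then upgrades this to $C^r$ (in particular $C^2$) regularity on $U$. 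If $\rho(0)$ were a cut point of $\rho(-\tau)$, Lemma \ref{lem:cut} would leave only two possibilities: $(\rho(0),\dot\rho(0))$ conjugate to $(\rho(-\tau),\dot\rho(-\tau))$, ruled out by part (1), or the existence of a second minimizer for $A_\tau(\rho(-\tau),\rho(0))$, ruled out by the uniqueness just established. Both are impossible, so $\rho(0)$ is not a cut point of $\rho(-\tau)$.

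The principal technical difficulty is the second-variation argument in part (1): although the overall pattern is classical for Riemannian geodesics, obtaining the polarization identity, the weak-Jacobi characterization of the null space of $I$, and in particular the jump-term identity at the corner $-s_0$, requires the full index-form machinery for Tonelli Lagrangians developed in the paper's appendix; once this is in place, part (2) is an essentially bookkeeping application of the propositions already recorded in Section~2.
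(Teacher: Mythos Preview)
Your argument for (1) is correct and is precisely the index-form contradiction that the paper records as Proposition~\ref{pro:conjugate} in the appendix; the paper's proof of (1) simply cites that proposition.

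Your argument for (2), however, is circular. Proposition~\ref{pro:before_conjugate} has \emph{two} hypotheses: non-conjugacy of the endpoints \emph{and} differentiability of $A_\tau(\rho(-\tau),\cdot)$ at $\rho(0)$. You invoke it using only the first, extract uniqueness of the minimizer at $\rho(0)$, and then feed that uniqueness into Proposition~\ref{pro:differential}(1) to obtain the very differentiability that Proposition~\ref{pro:before_conjugate} required as input. Non-conjugacy by itself only makes $\pi\circ\Phi_\tau^L$ a local diffeomorphism near $\dot\rho(-\tau)$; it does not exclude a second minimizer whose initial velocity lies far from $\dot\rho(-\tau)$, which is exactly the cut-point scenario you want to rule out.

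The paper breaks this circle by exploiting the one feature of a calibrated curve you did not use: it extends \emph{past} $-\tau$. If $\alpha:[0,\tau]\to M$ were another minimizer for $A_\tau(\rho(-\tau),x)$, the concatenation
\[
\alpha_1(t)=\begin{cases}\rho(t),& t\in[-\tau-1,-\tau],\\ \alpha(t+\tau),& t\in[-\tau,0],\end{cases}
\]
is again $u$-calibrated, hence a minimizer on $[-\tau-1,0]$, hence a $C^1$ extremal; Cauchy--Lipschitz at $t=-\tau$ then forces $\dot\alpha(0)=\dot\rho(-\tau)$, so $\alpha=\rho|_{[-\tau,0]}$. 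This gives uniqueness (equivalently, differentiability) directly, after which Proposition~\ref{pro:before_conjugate} legitimately yields the $C^2$ regularity, and Lemma~\ref{lem:cut} finishes the cut-point claim.
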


\begin{proof}
Statement (1) is a direct consequence of Proposition \ref{pro:conjugate}. To prove (2), it is sufficient to prove $x$ is a point of differentiability of $A_\tau(\rho(-\tau), \cdot)$, for any $\tau>0$, by Proposition \ref{pro:before_conjugate} and Lemma \ref{lem:cut}. Otherwise, there exists another minimizer $\alpha:[0, \tau]\to M$ of $A_\tau(\rho(-\tau),x)$. Then, the speed curve of
\begin{align*}
\alpha_1(t)=
\begin{cases}
	\rho(t), & t\in[-\tau-1,-\tau]\\
	\alpha(t+\tau), & t\in[-\tau, 0]
\end{cases}
\end{align*}
satisfies \eqref{E-L} which contradicts to the Cauchy-Lipschitz Theorem. 
\end{proof}

\section{Riccati Equation }

Now we turn to the associated Riccati equation which will help us to estimate the Laplacian of the fundamental solution $A_t(x,y)$ and the Laplacian of weak KAM solution in the barrier sense. Our approach is inspired by the Gromoll and Cheeger's  splitting theorem on a non-compact manifold with nonnegative Ricci curvature. The properties of rays  ensure the non-existence of conjugate point (or cut point) in positive direction. Fortunately, in the compact case, Fathi's weak KAM theorem constructs the backward calibrated curves which play the same role as the rays. In principle, the key point in the following discussion is to avoid the trouble of the existence of conjugate points.

In this section, we  suppose that  $\dim M =n\geqslant2$  and the Lagrangian $L$ has the form
\begin{align*}
L(x, v)=\frac 12g_x(v,v)-f(x)-\omega(v)=\frac 12g_x(v,v)-f(x)-g_x(X,v)
\end{align*}
where $\omega$ is a closed 1-form on $M$ and  $X:=\omega^{\#}$ be its corresponding vector field.

\begin{The}\label{thm:Riccati_1}
If $\rho:[0,t]\to M$ is a minimal curve for $A_t(\rho(0),\rho(t))$ and $(\rho(t),\dot{\rho}(t))$ is not conjugate to $(\rho(0),\dot{\rho}(0))$, then we have
\begin{equation}\label{eq:Riccati_1}
\dot{\Theta}(s)+\frac{1}{n}\Theta^2(s)+\operatorname{Ric}(\dot{\rho}(s))+\Delta f(\rho(s))\leqslant0,\quad s\in(0,t],
\end{equation}
where $\Theta(s)=\Delta_y A_s(\rho(0),\rho(s))+\operatorname{div}X(\rho(s)).$ 
\end{The}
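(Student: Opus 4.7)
The plan is to derive a matrix Riccati equation along $\rho$ for a symmetric shape operator $S(s)$ whose trace equals $\Theta(s)$, and then apply Cauchy--Schwarz to the trace. For $s\in(0,t]$, the non-conjugacy hypothesis together with Proposition \ref{pro:before_conjugate} and Lemma \ref{lem:cut} (noting that any second minimizer on $[0,s]$ with $s<t$ would, after concatenation with $\rho|_{[s,t]}$, yield a non-smooth minimizer of $A_t(\rho(0),\rho(t))$, contradicting $C^r$-regularity of minimizers) shows that $u_s(y):=A_s(\rho(0),y)$ is of class $C^r$ in some neighborhood $U_s$ of $\rho(s)$. Since $L_v(x,v)=g_x(v-X(x),\cdot)$, Proposition \ref{pro:differential}(1) gives $\nabla u_s(y)=\dot{\rho}_y(s)-X(y)$ for $y\in U_s$, where $\rho_y:[0,s]\to M$ is the unique minimizer from $\rho(0)$ to $y$. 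I would set $Y_s(y):=\nabla u_s(y)+X(y)=\dot{\rho}_y(s)$ and define the $(1,1)$-tensor
\[
S(s):=\nabla Y_s\big|_{\rho(s)}=\operatorname{Hess}u_s(\rho(s))+\nabla X(\rho(s)).
\]
A key observation is that $S(s)$ is $g$-symmetric: $\operatorname{Hess}u_s$ is symmetric, and since $\omega$ is closed, the identity $d\omega(v,w)=g(\nabla_vX,w)-g(\nabla_wX,v)$ (valid because the Levi-Civita connection is torsion-free) forces $\nabla X$ to be $g$-symmetric. Moreover $\operatorname{tr}S(s)=\Delta u_s(\rho(s))+\operatorname{div}X(\rho(s))=\Theta(s)$, and since trace commutes with $\nabla_{\dot{\rho}}$, we get $\operatorname{tr}(\nabla_{\dot{\rho}}S)=\dot{\Theta}(s)$.

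Next I would derive the Riccati identity for $S$. Given $v\in T_{\rho(s)}M$, extend it to a Jacobi field $J$ along $\rho$ with $J(0)=0$ and $J(s)=v$, which by Proposition \ref{pro:Jacobi_formula} corresponds to a smooth variation $\rho_z$ of extremals emanating from $\rho(0)$. Differentiating the tautology $Y_s(\rho_z(s))=\dot{\rho}_z(s)$ at $z=0$ and swapping mixed covariant derivatives yields $\nabla_{\dot{\rho}}J(s)=S(s)J(s)$. Applying $\nabla_{\dot{\rho}}$ once more and plugging in the Jacobi equation \eqref{Jacobi equation} from Proposition \ref{pro:2nd_var} produces
\[
(\nabla_{\dot{\rho}}S)J+S^2J+R(J,\dot{\rho})\dot{\rho}+\operatorname{Hess}f(J)=0.
\]
By Lemma \ref{lem:not_conjugate} and the non-conjugacy hypothesis, Jacobi fields vanishing at $0$ span $T_{\rho(s)}M$ for every $s\in(0,t]$, so the above holds as an endomorphism identity $\nabla_{\dot{\rho}}S+S^2+\mathcal{R}+\operatorname{Hess}f=0$, where $\mathcal{R}(w):=R(w,\dot{\rho})\dot{\rho}$.

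Taking the trace and using $\operatorname{tr}\mathcal{R}=\operatorname{Ric}(\dot{\rho})$ and $\operatorname{tr}(\operatorname{Hess}f)=\Delta f$ yields $\dot{\Theta}(s)+\operatorname{tr}(S^2)+\operatorname{Ric}(\dot{\rho}(s))+\Delta f(\rho(s))=0$. Because $S(s)$ is $g$-symmetric, the Cauchy--Schwarz bound applied to its eigenvalues gives $\operatorname{tr}(S^2)\geqslant\tfrac{1}{n}(\operatorname{tr}S)^2=\tfrac{1}{n}\Theta(s)^2$, and rearrangement produces the claimed inequality. The most delicate point will be making the identity $\nabla_{\dot{\rho}}J=SJ$ rigorous as an endomorphism equation; this requires the non-conjugacy assumption (so that Jacobi fields vanishing at $0$ really span the full tangent space via Lemma \ref{lem:not_conjugate}) together with the $g$-symmetry of $\nabla X$ coming from $\omega$ being closed. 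Without the latter, the Cauchy--Schwarz estimate $\operatorname{tr}(S^2)\geqslant\tfrac{1}{n}(\operatorname{tr}S)^2$ would generally fail, so the closedness of $\omega$ enters the proof in an essential way.
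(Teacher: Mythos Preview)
Your proposal is correct and follows essentially the same route as the paper: both derive the matrix Riccati identity $\nabla_{\dot\rho}S+S^2+\mathcal{R}+\operatorname{Hess}f=0$ for the endomorphism $S(s)=\operatorname{Hess}A_s(\rho(0),\cdot)+\nabla X$ along $\rho$, and then take the trace. The paper carries this out in a parallel orthonormal frame, writing the Jacobi fields as a matrix $A(s)$ and setting $\Lambda=A^{-1}\dot A$, whereas you work intrinsically with the shape operator of the vector field $Y_s=\nabla u_s+X$; these are the same object in different notation.

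One point where your write-up is sharper than the paper's: you explicitly justify the inequality $\operatorname{tr}(S^2)\geqslant\tfrac1n(\operatorname{tr}S)^2$ by observing that $S$ is $g$-symmetric, which in turn relies on $\nabla X$ being symmetric because $\omega$ is closed. The paper simply invokes the inequality without comment, but it does fail for general (non-symmetric) endomorphisms, so your remark that the closedness of $\omega$ enters here in an essential way is a genuine clarification.
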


\begin{proof}
Let $(e_1, e_2, \cdots, e_n)$ be an orthonormal basis of $T_{\rho(0)} M$ and let us parallel transport along $\rho$ to define a new family $(e_1(s), e_2(s), \ldots, e_n(s))$ in $T_{\rho(s)} M$. Denote $\pi \circ \Phi_s^L(x, \dot{\rho}(0)+z e_i)$ by $\rho_i(s, z), i=1,2, \cdots, n$. By Proposition \ref{pro:Jacobi_formula},
\begin{align*}
J_i(s)=\left.\partial_z\right|_{z=0}\rho_i(s,z),\qquad i=1,2,\ldots,n,
\end{align*}
are Jacobi fields with $J_i(0)=0, \nabla_{\dot{ \rho}} J_i(0)=e_i$. Let $J_i(s)=\sum_{j=1}^n a_{i j}(s) e_j(s), i=1,2, \cdots, n$, then
\begin{align*}
\sum_{j=1}^n\ddot{a}_{ij}(s) e_j(s)+\sum_{j=1}^n a_{ij}(s)R(e_j(s),\dot{\rho}(s))\dot{\rho}(s)+\sum_{j=1}^n a_{ij}(s)\operatorname{Hess}f(e_j(s))=0
\end{align*}
by \eqref{Jacobi equation}. This implies that
\begin{align*}
\ddot{a}_{ik}(s)+\sum_{j=1}^na_{ij}(s)g(R(e_j(s),\dot{\rho}(s))\dot{\rho}(s),e_k(s))+\sum_{j=1}^na_{ij}(s)g(\operatorname{Hess}f(e_j(s)),e_k(s))=0
\end{align*}
for $i,k=1,2,\ldots,n$. Therefore, we have the following matrix Riccati equation
\begin{align*}
\ddot{A}(s)+A(s)R(s)+A(s)\nabla^2f(s)=0,
\end{align*}
where
\begin{align*}
A(s)=&\,(a_{ij}(s))_{1\leqslant i,j\leqslant n},\\
R(s)=&\,(g(R(e_i(s),\dot{\rho}(s))\dot{\rho}(s),e_j(s))_{1\leqslant i,j\leqslant n},\\
\nabla^2f(s)=&\,g(\operatorname{Hess}f(e_i(s)),e_j(s))_{1 \leqslant i,j\leqslant n}.
\end{align*}
We now claim that
\begin{align*}
\nabla_{\dot{\rho_i}}J_i(s)=a_{ij}(s)\operatorname{Hess}A_s^x(e_j(s)),
\end{align*}
where $A_s^x(\cdot):=A_s(\rho(0),\cdot)$.

Indeed, by Proposition \ref{pro:before_conjugate} we have
\begin{align*}
\nabla_{\dot{\rho}}J_i(s)=&\,\nabla_{\partial_s\rho_i}\partial_z\rho_i(s, 0)=\nabla_{\partial_z\rho_i}\partial_s\rho_i(s,0)=\nabla_{\partial_z\rho_i}\frac{\partial}{\partial s}\pi\circ\Phi_s^L(x,\dot{\rho}(0)+z e_i)(s,0)\\
=&\,\nabla_{\partial_z \rho_i}\Phi_s^L(x, \dot{\rho}(0)+z e_i)(s,0)=\nabla_{\partial_z\rho_i}(\rho_i(s,z),\dot\rho_i(s,z))(s,0),
\end{align*}
where for any $r\in[0,s]$, $\rho_i(r,z)=\pi\circ\Phi_r^L(\rho(0),\dot{\rho}(0)+ze_i)$ is the unique minimal curve for $A_s(\rho(0),\pi\circ\Phi_s^L(x,\dot{\rho}(0)+ze_i))$ for $z$ small enough.

By Proposition \ref{pro:differential} we have $L_v(\rho_i(s,z),\dot{\rho}_i(s,z))=d_yA_s(\rho(0),\rho_i(s,z))$, and this implies that
\begin{align*}
(\rho_i(s,z),\dot\rho_i(s,z))=\mathcal{L}^{-1}(\rho_i(s,z),d_yA_s(\rho(0),\rho_i(s,z)))=\nabla_yA_s(\rho(0),\rho_i(s,z))+X(\rho_i(s,z)), 
\end{align*}
where $H(x,p)=\frac 12g_x^*(p+\omega,p+\omega)+f(x)$ is the  Tonelli Hamiltonian associated to $L$. Therefore, 
\begin{align*}
\nabla_{\dot{\rho}}J_i(s)=&\,\nabla_{\partial_z\rho_i}(\rho_i(s,z),\dot{\rho}_i(s,z))(s,0)=\nabla_{J_i(s)}(\nabla_y A_s^x+X)\\
=&\,\nabla_{\sum_{j=1}^na_{ij}(s)e_j(s)}(\nabla_yA_s^x+X)=\sum_{j=1}^na_{ij}(s)(\operatorname{Hess}A_s^x(e_j(s))+\nabla_{e_j(s)}X).
\end{align*}

Notice that
\begin{align*}
g(\nabla_{\dot{\rho}}J_i(s),e_k(s))=&\,\sum_{j=1}^n a_{ij}(s)\{g(\operatorname{Hess}A_s^x(e_j(s),e_k(s))+g(\nabla_{e_j(s)}X,e_k(s))\} \\
=&\sum_{j=1}^n \dot{a}_{i j}(s) g(e_j(s), e_k(s)).
\end{align*}
Thus, we obtain
\begin{align*}
A(s)(\nabla^2A_s^x+B(s))=\dot{A}(s),
\end{align*}
where $\nabla^2 A_s^x=(g(\operatorname{Hess}A_s^x(e_i(s)),e_j(s))_{1\leqslant i,j\leqslant n}$ and $B(s)=(g(\nabla_{e_i(s)}X, e_j(s)))_{1\leqslant i, j\leqslant n}$.

Since $(\rho(s),\dot{\rho}(s))$ is not conjugate to $(\rho(0),\dot{\rho}(0))$, $J_i(s)=d_{\dot{\rho}(0)}(\pi\circ\Phi_s^L)(e_i)$ are linear independent and $A(s)$ is invertible for each $s\in(0,t]$. Let $\Lambda(s)=A^{-1}(s) \dot{A}(s)=\nabla^2 A_s^x+B(s)$. We get that
\begin{align*}
\dot{\Lambda}(s)=&\,-A^{-1}(s)\dot{A}(s)A^{-1}(s)\dot{A}(s)+A^{-1}(s)\ddot{A}(s)\\
=&\,-\Lambda^2(s)+A^{-1}(s)(-A(s)R(s)-A(s)\nabla^2f(s))\\
=&\,-\Lambda^2(s)-R(s)-\nabla^2 f(s)
\end{align*}
and $\operatorname{tr}\Lambda(s)=\Delta_yA_s(\rho(0),\rho(s))+\operatorname{div}X(\rho(s))$. We rewrite the equality above as
\begin{equation}\label{eq:Riccati_equation}
\dot{\Lambda}(s)+\Lambda^2(s)+R(s)+\nabla^2f(s)=0.
\end{equation}
By taking trace of \eqref{eq:Riccati_equation}, we arrive at
\begin{align*}
\frac{d}{dt}(\operatorname{tr}\Lambda(s))+\operatorname{tr}(\Lambda^2(s))+\operatorname{Ric}(\dot{\rho}(s))+\Delta f(\rho(s))=0.
\end{align*}
Set $\operatorname{tr}\Lambda(s)=\Theta(s)$. Then \eqref{eq:Riccati_1} follows by recalling the inequality $\operatorname{tr}(\Lambda^2(s))\geqslant\frac 1n\operatorname{tr}^2(\Lambda(s))$.		
\end{proof}

Next, we give some basic comparison estimates of the Riccati equation that will be needed later.
\begin{Lem}\label{lem:Riccati_comparison}
Consider a $C^1$ function $\alpha:(0,t)\to\R$ such that
\begin{align*}
\dot{\alpha}(s)+\frac 1n\alpha^2(s)+k\leqslant0,\qquad \lim _{s\to0^+}s^2\alpha(s)=0.
\end{align*}
Then, 
\begin{align*}
\alpha(s)\leqslant
\begin{cases}
	\sqrt{nk}\cot(\sqrt{k/n}s)&\text{if}\ k>0, s<\min\{t,\pi/\sqrt{k/n}\}, \\
	n/s & \text{if}\ k=0, \\
	\sqrt{-nk}\operatorname{coth}(\sqrt{-k/n}s)&\text{if}\ k<0 .
\end{cases}
\end{align*}
for any $s\in(0,t)$.
\end{Lem}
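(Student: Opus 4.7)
The plan is to prove the bound by a direct ODE comparison against the explicit Riccati solution $\beta(s)$ given by the asserted right-hand side in each of the three cases. I would first verify by a routine calculation that $\beta$ solves $\dot\beta + \beta^2/n + k = 0$ on its natural domain of definition. This reduces the lemma to showing $\alpha(s_0) \leq \beta(s_0)$ at an arbitrary fixed $s_0 \in (0, t)$ (with $s_0 < \pi/\sqrt{k/n}$ in the case $k > 0$).

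I would then argue by contradiction. Assuming $\alpha(s_0) > \beta(s_0)$, let $\phi$ denote the maximal Riccati solution with $\phi(s_0) = \alpha(s_0)$. A direct inspection of the Riccati phase portrait shows that $\phi$ is of the same blow-up type as $\beta$: for $k < 0$, the value $\beta(s_0)$ already exceeds the equilibrium $\sqrt{-nk}$, so $\phi$ lies on the $\coth$-branch rather than the bounded $\tanh$-branch; for $k \geq 0$ every non-constant solution is of this form. Writing $\phi(s) = \beta(s - s_*)$ for the appropriate shift parameter $s_* \in \R$ and invoking the strict monotonicity of $\cot$, $1/s$ or $\coth$ on the relevant interval, the inequality $\phi(s_0) > \beta(s_0)$ forces $s_* > 0$. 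Hence $\phi$ blows up at an interior point $s_* \in (0, s_0)$: $\phi(s) \to +\infty$ as $s \to s_*^+$.

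Next I would set $w := \alpha - \phi$ and combine $w(s_0) = 0$ with the differential inequality to obtain
\[
\dot w \leq -\frac{1}{n}(\alpha^2 - \phi^2) = -\frac{\alpha + \phi}{n}\, w
\]
on $(s_*, t)$. With the positive integrating factor $\mu(s) := \exp\bigl(\int_{s_0}^s (\alpha + \phi)/n\, d\tau\bigr)$, this gives $(\mu w)'(s) \leq 0$, so $\mu w$ is non-increasing; since $(\mu w)(s_0) = 0$ and $\mu > 0$, we conclude $\alpha(s) \geq \phi(s)$ on $(s_*, s_0]$. Letting $s \to s_*^+$ yields $\alpha(s) \to +\infty$, contradicting the continuity of $\alpha$ at the interior point $s_* \in (0, t)$.

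The main obstacle is the second step: verifying in each regime that the backward blow-up time $s_*$ of $\phi$ is strictly positive. For $k > 0$ this uses strict decreasingness of $\cot$ on $(0, \pi)$, for $k = 0$ it follows at once from the explicit form $\phi(s) = n/(s - s_*)$, and for $k < 0$ it uses strict decreasingness of $\coth$ on $(0, \infty)$ together with the observation that $\phi$ sits above the upper equilibrium. Once $s_* > 0$ is secured, the integrating-factor comparison in the third step is standard. The hypothesis $\lim_{s \to 0^+} s^2 \alpha(s) = 0$ encodes the natural left-endpoint compatibility of $\alpha$ with the canonical choice of $\beta$ (which also satisfies $s^2 \beta(s) \to 0$); the decisive contradiction in this plan, however, is extracted at the interior point $s_*$ rather than at the boundary $s = 0$.
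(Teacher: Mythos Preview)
Your argument is correct, but it differs from the paper's in an interesting way. The paper introduces the single auxiliary function
\[
S_{n,k}(s)=\begin{cases}\sqrt{n/k}\,\sin(\sqrt{k/n}\,s)&k>0,\\ s&k=0,\\ \sqrt{-n/k}\,\sinh(\sqrt{-k/n}\,s)&k<0,\end{cases}
\]
so that $\beta=n\dot S_{n,k}/S_{n,k}$ in all three cases, and then computes directly that $\dfrac{d}{ds}\bigl(S_{n,k}^2(\alpha-\beta)\bigr)\le -\tfrac1n S_{n,k}^2(\alpha-\beta)^2\le 0$. The boundary hypothesis $\lim_{s\to0^+}s^2\alpha(s)=0$ is then used to identify the limit of $S_{n,k}^2(\alpha-\beta)$ at $0^+$ as $0$, giving $\alpha\le\beta$ immediately. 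This is a single forward computation, with no case split and no contradiction.

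Your route---assume $\alpha(s_0)>\beta(s_0)$, take the exact Riccati solution $\phi$ through $(s_0,\alpha(s_0))$, check case-by-case that $\phi$ blows up at some $s_*\in(0,s_0)$, and then push $\alpha\ge\phi$ backward via an integrating factor to force $\alpha(s)\to+\infty$ at the interior point $s_*$---is longer and requires the branch analysis you outline, but it has a payoff: as you correctly observe, the contradiction is obtained at $s_*>0$, so your proof never actually invokes $\lim_{s\to0^+}s^2\alpha(s)=0$. In other words, your argument shows that this hypothesis is redundant; mere continuity of $\alpha$ on $(0,t)$ already forces $\alpha\le\beta$. The paper's proof genuinely needs the boundary condition to evaluate the limit, so yours is in that sense sharper, at the cost of being less streamlined.
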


\begin{proof}
Set
\begin{align*}
S_{n, k}(s):=
\begin{cases}
	\sqrt{n/k}\sin(\sqrt{k/n}s)&\text{if}\ k>0,\\
	s&\text{if}\ k=0, \\
	\sqrt{-n/k}\operatorname{sinh}(\sqrt{-k/n}s)&\text{if}\ k<0 .
\end{cases}
\end{align*}
then, $\beta(t):=n\dot{S}_{n,k}(s)/S_{n,k}(s)$ solves the Riccati equation
\begin{align*}
\dot{\alpha}(s)+\frac{1}{n}\alpha^2(s)+k=0,\quad s \in(0,t).
\end{align*}
Inspired by (3.10) in \cite{Wei_Wylie2009}, we have
\begin{align*}
\frac{d}{dt}(S_{n,k}^2(\alpha-\beta))=&\,2S_{n,k}\dot{S}_{n,k}(\alpha-\beta)+S_{n,k}^2(\dot{\alpha}-\dot{\beta})\\
\leqslant&\,2S_{n,k}\dot{S}_{n,k}(\alpha-\beta)+S_{n,k}^2(-\frac 1n\alpha^2-k+\frac 1n\beta^2+k)\\
=&\,2S_{n,k}\dot{S}_{n,k}(\alpha-\beta)+\frac 1nS_{n,k}^2(\beta^2-\alpha^2)\\
=&\,\frac{2}{n}S_{n,k}^2\beta(\alpha-\beta)+\frac 1nS_{n,k}^2(\beta^2-\alpha^2)\\
=&\,-\frac{1}{n}S_{n, k}^2(\beta-\alpha)^2\leqslant0.
\end{align*}
Together with the condition that $\lim_{s\to0^+}S_{n,k}^2\alpha=\lim_{s\to0^+}S_{n,k}^2 \beta=0$, we have $\alpha(s)\leqslant\beta(s)$ for $s\in(0, t)$.
\end{proof}

\section{Main Results}
In this section, we show certain rigidity results for the weak KAM solutions and Aubry sets under certain curvature hypothesis. Moreover, we give some applications to the Mather quotient and   Ma\~n\'e's  Lagrangian. 

Now, we recall the notion of Laplacian of a continuous function in the barrier sense.
\begin{defn}
Let $f:M\to\R$ be a continuous function on a Riemannian manifold $(M,g)$.
\begin{enumerate}[\rm (1)]
\item A $C^2$ function $\hat{f}:M\to\R$ is said to be a support function from above of $f$ at $p\in M$ if $\hat{f}(p)=f(p)$ and $\hat{f}(x)\geqslant f(x)$ in some neighborhood of $p$. 
\item We say $\Delta f(p)\leqslant B\in\R$ in the barrier sense if for every $\epsilon>0$,  one can find a $C^2$ support function $f_{\epsilon}$ from above of $f$ at $p$ such that $\Delta f_{\epsilon}(p)\leqslant B+\epsilon$.
\item A continuous function $f:M\to\R$ is said to be superharmonic if $\Delta f(p)\leqslant0$ in the barrier sense for each $p\in M$.
\item Similarly, we say that a continuous function $f:M\to\R$ is subharmonic if $-f$ is superharmonic. 
\end{enumerate} 
\end{defn}

The following maximal principle was proved by Calabi in \cite{Calabi1958}. A fundamental proof can be aslo found in \cite{Eschenburg_Heintze1984}.

\begin{The}\label{thm:MP}
If $f:M\to\R$ is a superharmonic function, then $f$ is constant in a neighborhood of every local minimum. In particular, $f$ is constant if $f$ has a global minimum.
\end{The}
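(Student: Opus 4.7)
My strategy is the classical Hopf--Calabi sliding-ball barrier argument, adapted to handle the fact that $f$ is only required to admit $C^{2}$ supports from above with arbitrarily small Laplacian. The first assertion (constancy on a neighborhood of a local minimum) implies the second by a standard connectedness argument: the set $\{x \in M : f(x) = \min f\}$ is non-empty, closed by continuity of $f$, and open by the first assertion, hence equals $M$ since $M$ is connected.

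To prove the local statement I argue by contradiction. Assume $p_0$ is a local minimum but $f$ is not constant in any neighborhood of $p_0$. Choose a small geodesic ball $\bar{B}_r(p_0)$, with $r$ smaller than the injectivity radius at $p_0$, on which $f \geqslant f(p_0)$, and pick a point where $f$ is strictly larger. The key geometric input is then the existence of an open ball $B_\rho(y_0)\subset\{f>f(p_0)\}$ whose closure touches $\{f=f(p_0)\}$ at some point $p^*\in\partial B_\rho(y_0)$; one obtains this by setting $y_0$ near a point where $f(y_0)>f(p_0)$ and taking $\rho$ equal to the distance from $y_0$ to $\{f=f(p_0)\}\cap \bar B_r(p_0)$, shrinking $r$ so that $\rho$ is below the injectivity radius at $y_0$.

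Next I introduce the exponential barrier
\begin{align*}
\phi(x) = e^{-\alpha d(x,y_0)^2} - e^{-\alpha(\rho+\tau)^2}
\end{align*}
on the closed annulus $\bar A=\{x:\rho/2\le d(x,y_0)\le \rho+\tau\}$, where $\tau>0$ is small enough that $\bar A$ stays inside the injectivity radius of $y_0$. A direct computation using $\|\nabla d(\cdot,y_0)\|=1$ shows
\begin{align*}
\Delta\phi = e^{-\alpha r^2}\bigl(4\alpha^{2}r^{2}-2\alpha-2\alpha r\,\Delta r\bigr),
\end{align*}
and since $\Delta r$ is bounded on $\bar A$, I can choose $\alpha$ large enough that $\Delta\phi\geqslant c>0$ uniformly on $\bar A$. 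I then consider $g=f-\epsilon\phi$. On the outer boundary $\phi=0$, so $g=f\geqslant f(p_0)$; on the inner boundary $f\geqslant f(p_0)+\delta$ for some $\delta>0$ by compactness, so for $\epsilon$ small $g\geqslant f(p_0)+\delta/2$; but $g(p^*)=f(p_0)-\epsilon\phi(p^*)<f(p_0)$ because $\phi(p^*)>0$. Hence the minimum of $g$ on $\bar A$ is attained at an interior point $x^*\in A$.

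To finish, I use the barrier-sense hypothesis: for any $\eta>0$ pick a $C^{2}$ support $f_\eta$ from above at $x^*$ with $\Delta f_\eta(x^*)\leqslant\eta$. Then $g_\eta:=f_\eta-\epsilon\phi$ is $C^{2}$, agrees with $g$ at $x^*$, and dominates $g$ near $x^*$, so $x^*$ is a local minimum of $g_\eta$, whence $\Delta g_\eta(x^*)\geqslant 0$, i.e.\ $\Delta f_\eta(x^*)\geqslant\epsilon\Delta\phi(x^*)\geqslant\epsilon c$. Choosing $\eta<\epsilon c$ produces the contradiction. The main obstacle I anticipate is keeping the distance function $d(\cdot,y_0)$ smooth on the annulus (so that the Laplacian computation of $\phi$ is valid) and then ensuring that the competition between the barrier and $f$ forces the minimum of $g$ strictly into the interior; both issues are resolved by shrinking $r$ so that everything is controlled by the injectivity radius and a compactness/continuity bound $\delta$ on the inner sphere.
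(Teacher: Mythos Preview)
The paper does not actually prove this theorem; it quotes it from the literature, attributing it to Calabi \cite{Calabi1958} with an alternative exposition in \cite{Eschenburg_Heintze1984}. Your proposal is precisely the classical Hopf--Calabi barrier argument that those references contain, and it is carried out correctly: the extended annulus with outer radius $\rho+\tau$ (rather than $\rho$) is the right choice, since it places the touching point $p^{*}$ in the \emph{interior} of $\bar A$, which is what forces $\min_{\bar A} g < \min_{\partial A} g$ and hence an interior minimum of $g$. The barrier-support step at $x^{*}$ is exactly how Calabi upgraded Hopf's smooth argument to the barrier setting.

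One point worth tightening in the construction of $y_0$ and $\rho$: you need the full annulus $\{\rho/2\le d(\cdot,y_0)\le \rho+\tau\}$ to remain inside $\bar B_r(p_0)$ so that the inequality $f\geqslant f(p_0)$ is available on the outer sphere. This is easy to arrange once you note that since $p_0\in\{f=f(p_0)\}$ one has $\rho\le d(y_0,p_0)$, so choosing $y_0$ with $d(y_0,p_0)<r/4$ and $\tau<r/4$ gives $B_{\rho+\tau}(y_0)\subset B_r(p_0)$. Apart from this bookkeeping, the argument is complete and matches the cited sources.
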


\begin{Lem}\label{lem:barrier}
Let $L:TM\to\R$ be a Tonelli Lagrangian on a closed connected Riemannian manifold $(M,g)$ and let $u$ be a weak KAM solution of \eqref{eq:HJ_wk}. For any point $x$ and any $(u,L,c[L])$-calibrated curve $\rho:(-\infty,0]\to M$ ending at $x$, the function
\begin{align*}
u(\rho(-t))+ A_t(\rho(-t),\cdot)+c[L]t
\end{align*}
defined on $M$ is a support function from above of $u$ at $x$ for any $t>0$.  
\end{Lem}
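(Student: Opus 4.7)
The plan is to verify the three defining properties of a support function from above for the candidate
\[
\hat u(y) := u(\rho(-t)) + A_t(\rho(-t),y) + c[L]t,
\]
namely: (a) $C^2$ regularity of $\hat u$ in a neighborhood of $x$, (b) equality $\hat u(x)=u(x)$, and (c) the one-sided inequality $\hat u(y)\geqslant u(y)$ near $x$. The key observation is that (b) and (c) reduce, respectively, to the definition of $(u,L,c[L])$-calibration and to the subsolution (domination) property of any weak KAM solution, while (a) is a direct application of Theorem \ref{thm:conjugate_weak_KAM}(2).

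First I would address (b). By definition of calibration applied on $[-t,0]$, we have $u(x)-u(\rho(-t))=\int_{-t}^0 L(\rho,\dot\rho)\,ds + c[L]t$, and simultaneously this equality forces $\int_{-t}^0 L(\rho,\dot\rho)\,ds = A_t(\rho(-t),x)$, since otherwise a strictly better competitor would contradict the calibration equality. Rearranging gives $u(x)=\hat u(x)$. For (c), one uses the standard fact (recalled in the paragraph preceding the definition of calibrated curves) that a weak KAM solution is dominated by $L+c[L]$: for any absolutely continuous curve $\gamma:[0,t]\to M$ with $\gamma(0)=\rho(-t)$ and $\gamma(t)=y$,
\begin{align*}
u(y)-u(\rho(-t)) \leqslant \int_0^t L(\gamma,\dot\gamma)\,ds + c[L]t.
\end{align*}
Taking the infimum over such curves yields $u(y)\leqslant u(\rho(-t))+A_t(\rho(-t),y)+c[L]t=\hat u(y)$, and this in fact holds globally on $M$, not merely near $x$.

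Finally, to obtain (a), I would invoke Theorem \ref{thm:conjugate_weak_KAM}(2) directly: it asserts that $A_t(\rho(-t),\cdot)$ is of class $C^2$ in some neighborhood of $\rho(0)=x$, as a consequence of the absence of conjugate points along a backward calibrated curve (Proposition \ref{pro:before_conjugate} combined with Lemma \ref{lem:cut}). Since $\hat u$ differs from $A_t(\rho(-t),\cdot)$ only by the additive constant $u(\rho(-t))+c[L]t$, the function $\hat u$ inherits $C^2$ regularity near $x$. Assembling (a), (b), (c) gives the lemma.

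There is no real obstacle here: the statement is essentially a bookkeeping consequence of previously proved facts. The only nontrivial input is the $C^2$ regularity of the fundamental solution, which was the content of Theorem \ref{thm:conjugate_weak_KAM}(2); everything else is immediate from the definition of calibration and the dominated character of weak KAM solutions.
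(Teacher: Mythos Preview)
Your proof is correct and follows essentially the same approach as the paper's: the paper phrases (b) and (c) via the Lax-Oleinik operator identity $u=T_t^-u+c[L]t$ (which encodes both the equality at $x$ and the global upper bound), while you unpack these as calibration and domination, but the content is identical. The $C^2$ regularity step via Theorem~\ref{thm:conjugate_weak_KAM} is exactly the paper's argument as well.
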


\begin{proof}
Since $\rho$ is a  $(u,L,c[L])$-calibrated curve, we have
\begin{align*}
u(x)=T_t^-u(x)+c[L]t=u(\rho(-t))+A_t(\rho(-t),x)+c[L]t
\end{align*}
for any $t>0$. In addition,
\begin{align*}
u(y)=T_t^{-}u(y)+c[L]t \leqslant u(\rho(-t))+A_t(\rho(-t),y)+c[L]t,\qquad\forall y\in M.
\end{align*}
By Proposition \ref{pro:conjugate}, $(x.\dot{\rho}(0))$ is not conjugate to $(\rho(-t),\dot{\rho}(-t))$ and Theorem \ref{thm:conjugate_weak_KAM} ensures that the function $\hat{u}(\cdot)=u(\rho(-t))+A_t(\rho(-t),\cdot)+c[L]t$ is of $C^2$ in a neighborhood of $x$. Thus, $\hat{u}$ is a support function from above of $u$ at $x$.
\end{proof}

\begin{Lem}\label{lem:estimate_near_0}
Assume that $L:TM\to\R$ is a Tonelli Lagrangian on a closed connected Riemannian manifold $(M, g).$ If $\rho:[0,t]\to M$ is a minimal curve for $A_t(\rho(0),\rho(t))$ satisfying $|\dot\rho|<K$ for some constant $K>0$, then there exist $\tau>0$ and $C_1,C_2 >0$ such that
\begin{align*}
|\Delta_yA_s(\rho(0),\rho(s))|\leqslant\frac{C_1}{s}+C_2
\end{align*}
for all $s\in(0,\tau)$, with constants $\tau$, $C_1$ and $C_2$ depend only on $K$.
\end{Lem}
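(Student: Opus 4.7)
The plan is to exploit the matrix Riccati framework set up in the proof of Theorem \ref{thm:Riccati_1} and to extract the short-time behavior of
$\Theta(s) := \Delta_y A_s(\rho(0),\rho(s)) + \operatorname{div} X(\rho(s))$
from the initial data of the associated Jacobi matrix. The leading-order picture is $\Lambda(s) = A(s)^{-1}\dot A(s) = s^{-1}I + O(s)$, so that $\Theta(s) = \operatorname{tr}\Lambda(s) = n/s + O(s)$; subtracting the uniformly bounded quantity $\operatorname{div} X(\rho(s))$ then produces the claimed estimate.

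First, I would fix $\tau_0 > 0$ depending only on $K$ so that every $L$-extremal $\bar\rho:[0,\tau_0]\to M$ with $|\dot{\bar\rho}(0)|_g \leqslant K$ carries no conjugate points on $[0,\tau_0]$. This uses the compactness of the set $\{(x,v)\in TM : |v|_x\leqslant K\}$, smoothness of the Euler-Lagrange flow, and the fact that $d_v(\pi\circ\Phi_0^L) = 0$ together with $\frac{\partial}{\partial s}d_v(\pi\circ\Phi_s^L)\big|_{s=0} = \mathrm{Id}$, so that the non-degeneracy criterion of Lemma \ref{lem:not_conjugate} persists up to a uniform time. Thus Theorem \ref{thm:Riccati_1} and its derivation apply to $\rho|_{[0,s]}$ for every $s\in(0,\tau_0]$.

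Next, I would revisit the matrix $A(s) = (a_{ij}(s))$ constructed in the proof of Theorem \ref{thm:Riccati_1}, which satisfies the linear system $\ddot A + A(R+\nabla^2 f) = 0$ with initial data $A(0)=0$ and $\dot A(0)=I$. Since $(\rho(s),\dot\rho(s))$ stays in the compact subset $\{|v|_x<K\}\subset TM$, the matrix $R(s)+\nabla^2 f(s)$ has operator norm uniformly bounded by some $C_K$. A standard Gronwall/bootstrap argument applied to the Volterra integral forms
$\dot A(s) = I - \int_0^s A(r)(R(r)+\nabla^2 f(r))\,dr$ and $A(s) = sI - \int_0^s \int_0^r A(u)(R(u)+\nabla^2 f(u))\,du\,dr$
then yields uniform estimates $\|\dot A(s)-I\| = O(s^2)$ and $\|A(s)-sI\| = O(s^3)$ on a possibly smaller interval $(0,\tau_1)$ with $\tau_1 = \tau_1(K)$. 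In particular, $A(s)$ is invertible with $\|A(s)^{-1}-s^{-1}I\| = O(s)$, whence $\Lambda(s) = A(s)^{-1}\dot A(s) = s^{-1}I + O(s)$ with constants depending only on $C_K$.

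Taking traces gives $\Theta(s) = n/s + O(s)$, and inserting the identity $\Delta_y A_s(\rho(0),\rho(s)) = \Theta(s) - \operatorname{div} X(\rho(s))$ together with the bound $\|\operatorname{div} X\|_\infty < \infty$ from smoothness of $X$ on compact $M$ produces $|\Delta_y A_s(\rho(0),\rho(s))| \leqslant C_1/s + C_2$ for $s\in(0,\tau)$ with $\tau$, $C_1$, $C_2$ depending only on $K$ and the fixed data of $L$ and $(M,g)$. The main obstacle is making the Taylor-type expansion uniform over the family of minimizers with $|\dot\rho|<K$: one must verify that the implicit constants in the expansions of $A(s)$ and $\dot A(s)$ are controlled only by the uniform bound $C_K$ on $R+\nabla^2 f$, which is ensured because $C_K$ itself depends only on the compact set $\{|v|_x\leqslant K\}$ and not on the individual curve.
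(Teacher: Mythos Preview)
Your argument is correct for the mechanical-type Lagrangians $L(x,v)=\tfrac12 g_x(v,v)-f(x)-\omega(v)$, and in fact yields the sharper expansion $\Theta(s)=n/s+O(s)$ rather than the coarser $O(1/s)$ bound. However, the lemma is stated for an \emph{arbitrary} Tonelli Lagrangian, while your entire mechanism---the equation $\ddot A+A(R+\nabla^2 f)=0$, the identity $\Lambda=\nabla^2 A^x_s+B$, and the appearance of $\operatorname{div}X$---comes from the proof of Theorem~\ref{thm:Riccati_1}, which is carried out under the standing hypothesis of Section~3 that $L$ has this special form. For a general Tonelli $L$ the Jacobi equation has the more involved form of Proposition~\ref{pro:Jabobi_field}, and the link between the Jacobi matrix and the Hessian of $A_s$ has to be re-derived; as written your proof does not address that case. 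Since the only uses of the lemma in the paper (Theorems~\ref{mechnaical case} and~\ref{solution and divergence}) are for such mechanical Lagrangians, your argument suffices for the applications, but not for the lemma as stated.

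The paper takes a different and shorter route: it passes to a single coordinate chart around $\rho(0)$, invokes a known Hessian bound $|d^2_y A_s(\rho(0),\rho(s))|\leqslant \hat C_K/s$ for general Tonelli Lagrangians (Lemma~2.4 in Bernard, \emph{Ann.\ Sci.\ \'Ec.\ Norm.\ Sup\'er.}\ 2012), and then reads off the Laplacian from the coordinate expression $\Delta u=\tfrac{1}{\sqrt{\det g}}\partial_i(g^{ij}\sqrt{\det g}\,\partial_j u)$, using that $d_yA_s(\rho(0),\rho(s))=L_v(\rho(s),\dot\rho(s))$ is bounded. This is cleaner and works in full generality; your approach is self-contained and more informative about the leading coefficient, but restricted in scope.
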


\begin{proof}
Choose a coordinate chart $(U,\phi)$ of $\rho(0)$, then one can find $\tau^1>0$ so that 
$\rho(s)\in U$ for all $s\in (0,\tau^1)$. This allows us to reduce to the case when $M=\R^n$.

Note that $L_v(\rho,\dot{\rho})$ is bounded. Applying Lemma 2.4 in \cite{Bernard2012} one can find $\hat{\tau}=\hat{\tau}(K)\in(0,\tau^1)$ and $\hat{C}(K)>0$ such that
\begin{align*}
|d^2_yA_s(\rho(0),\rho(s))|\leqslant\frac{\hat{C}_K}{s},\qquad\forall s\in (0,\hat{\tau})
\end{align*}  
Recall that in local chart the Laplacian operator has the representation
\begin{align*}
\Delta u=\frac{1}{\sqrt{ \det g}}\frac{\partial}{\partial x^i}\big(g^{ij}\sqrt{\det g}  \frac{\partial u}{\partial x^j}\big).
\end{align*}
Together with the boundedness of $d_y A_s(\rho(0),\rho(s))=L_v(\rho(s),\dot{\rho}(s))$, there are  $\tau\in(0,\hat \tau)$ and $C_1\geqslant\hat{C}_K, C_2>0$ such that
\begin{align*}
|\Delta_yA_s(\rho(0),\rho(s))|\leqslant\frac{C_1}{s}+C_2,\qquad\forall s\in (0,\tau),
\end{align*} 
with constants $\tau ,C_1, C_2$ depending only on $K$.
\end{proof}

\begin{proof}[ Proof of Theorem \ref{mechnaical case}]
For all $x\in M $ and each $(u,L,c[L])$-calibrated curve $\rho:(-\infty,0] \to M$ ending at $x,$
we  claim that  $\Delta_yA_t(\rho(-t),x)\leqslant\sqrt{-nk}\operatorname{coth}(\sqrt{-k/n}t)$ for every $t>0$.

Notice that
\begin{align*}
\tilde{\rho}(s)=\rho(s-t),\qquad s\in[0, t]
\end{align*}
is a minimal curve for $A_t(\tilde{\rho}(0),\tilde{\rho}(t))$. By Theorem \ref{thm:conjugate_weak_KAM}, $(\tilde{\rho}(t),\dot{\tilde{\rho}}(t))$ is not conjugate to $(\tilde{\rho}(0),\dot{\tilde{\rho}}(0))$. Applying Theorem \ref{thm:Riccati_1} we obtain
\begin{align*}
\dot{\Theta}(s)+\frac{1}{n}\Theta^2(s)+\operatorname{Ric}(\dot{\tilde{\rho}}(s))+\Delta f(\tilde{\rho}(s))\leqslant0,\qquad s \in(0, t],
\end{align*}
where $\Theta(s)=\Delta_y A_s(\tilde{\rho}(0), \tilde{\rho}(s))$. Since $M$ has nonnegative Ricci curvature and $\Delta f \geqslant k$, we arrive at
\begin{align*}
\dot{\Theta}(s)+\frac{1}{n}\Theta^2(s)+k\leqslant\dot{\Theta}(s)+\frac{1}{n}\Theta^2(s)+\Delta f(\tilde{\rho}(s))\leqslant0,\qquad s \in(0,t].
\end{align*}
Using that $\lim_{s\to0^+}s^2\Theta=0$ (see Lemma \ref{lem:estimate_near_0}) together with Lemma \ref{lem:Riccati_comparison} we discover 
\begin{align*}
\Theta(t)\leqslant\sqrt{-nk}\operatorname{coth}(\sqrt{-k/n}t),\qquad t\in(0,+\infty).
\end{align*}

From the claim we deduce that
\begin{align*}
\Delta u(x) \leqslant \sqrt{-n k}
\end{align*}
in the barrier sense.
\end{proof}

\begin{proof}[Proof of Theorem \ref{solution and divergence} ]
For each point $x\in M$, there exists a  $(u,L,c[L])$-calibrated curve $\rho:(-\infty,0] \to M$ such that
\begin{align*}
\rho(0)=x,\ H(x,L_v(\rho(0),\dot{\rho}(0))=c[L],\ \rho(s)=\pi\circ\Phi_s^L(\rho(0),\dot{\rho}(0)).
\end{align*}
Then, for each $t>0$, the curve
\begin{align*}
\tilde{\rho}(s)=\rho(s-t),\qquad s \in[0,t],
\end{align*}
is a minimal curve  for $A_t(\tilde{\rho}(0),\tilde{\rho}(t))$ and $(\tilde{\rho}(t), \dot{\tilde{\rho}}(t))$ is not conjugate to $(\tilde{\rho}(0), \dot{\tilde{\rho}}(0))$. By Theorem \ref{thm:Riccati_1} we have that
\begin{align*}
\dot{\Theta}(s)+\frac{1}{n}\Theta^2(s)+\operatorname{Ric}(\dot{\tilde{\rho}}(s))+\Delta f(\tilde{\rho}(s))\leqslant0,\qquad s \in(0, t],
\end{align*}
where $\Theta(s)=\Delta_y A_s(\tilde{\rho}(0), \tilde{\rho}(s))+   \operatorname{div}\omega^{\#} (\tilde{\rho}(s))$.

Invoking Lemma \ref{lem:Riccati_comparison} and the fact $\lim _{s \rightarrow 0^{+}} s^2 \Theta =0$ (see Lemma \ref{lem:estimate_near_0}) we obtain
\begin{align*}
\Theta(t)=\Delta_y A_t(\tilde{\rho}(0),\tilde{\rho}(t)) +   \operatorname{div}\omega^{\#} (x)=\Delta_yA_t(\rho(-t), x) +   \operatorname{div}\omega^{\#} (x) \leqslant n/t,\qquad \forall  t>0 .
\end{align*}
Hence, $ \Delta u (x )\leqslant -   \operatorname{div}\omega^{\#} (x) $ in the barrier sense.  
\end{proof}

\begin{proof}[Proof of Theorem   \ref{constant if and only if }]
We first prove the  sufficiency.  If $\omega$ is a harmonic 1-form, then $   \operatorname{div}\omega^{\#}\equiv0$ by Proposition \ref{some facts about Hodge Cohomology} (2). Since $(M,g)$ has nonnegative Ricci curvature,  by Theorem \ref{solution and divergence} each weak KAM solution $u$ of \eqref{eq:HJ_wk} is superharmonic. Hence,   Theorem \ref{thm:MP} implies that $u$ must be constant.

Now we turn to prove the necessity.	If each weak KAM solution $u$ of \eqref{eq:HJ_wk} is constant, Theorem \ref{solution and divergence} shows that $0\leqslant-\operatorname{div}\omega^{\#}(x)$ for all $x\in M$. By Stoke's Theorem we can obtain that $   \operatorname{div}\omega^{\#}\equiv0$. Hence, $ \omega$ is a harmonic 1-form by Proposition \ref{some facts about Hodge Cohomology} (2).
%
%
\end{proof}

\begin{Rem}
If $(M,g)$ is the flat tours $\mathbb{T}^n$ with dimension $n\geqslant 2$, then the de Rham Cohomology  $H^1(\mathbb{T}^n,\R)=\mathbb{R}^n$. The authors in  \cite{Contreras_Iturriagabook1999} proved that each weak KAM solution associated to the Lagrangian
\begin{align*}
L(x,v)=\frac 12\langle v,v\rangle^2-\omega(v),\quad [\omega]\in H^1(\mathbb{T}^n,\R)=\R^n,
\end{align*}
must be constant (see section 5.5 in \cite{Contreras_Iturriagabook1999}).
\end{Rem}
%
%

\begin{proof}[Proof of Corollary \ref{mather quotient}]
By Proposition \ref{some facts about Hodge Cohomology} (3), the inclusion 
\begin{align*}
i:\mathcal{H}^1(M,\R)&\to H^1(M,\R) \\
\omega &\mapsto[\omega ]
\end{align*}
is an isomorphism. For each $[\omega ]\in H^1(M,\R)$, one can choose a representative element $\tilde \omega$ which is a harmonic 1-form. Thus, we can reduce to the case when $\omega \in\mathcal{H}^1(M,\R)$.

Now, we claim  that $h(x,y)\equiv0$ for any $(x,y)\in M\times M$. Since
\begin{align*}
u(y)-u(x)\leqslant h(x,y),\qquad\forall (x,y)\in M\times M
\end{align*}
for any weak KAM solution $u$ of \eqref{eq:HJ_wk}. We observe that $ h(x,y)\geqslant0$ since  $u$ must be constant by Theorem \ref{constant if and only if }.  Now taking a point $z\in   \mathcal{A}(L_\omega)$ we have $h(x,y)\leqslant h(x,z)+h(z,y)$ for all $x,y\in M$. Notice that $h_z(\cdot):=h(z,\cdot)$ is a weak KAM solution of \eqref{eq:HJ_wk} and $h(z,z)=0$. It yields that $ h_z(\cdot)=h(z,\cdot)\equiv0$ by Theorem \ref{constant if and only if }.  In addition, because $ h^z(\cdot):=h(\cdot,z)$ is a weak KAM solution of  \eqref{eq:HJ_wk}  associated to the Lagrangian
$$\breve{L}(x,v):= L(x,-v) =\frac{1}{2} g_x(v, v)+\omega(v) $$
and $ - \omega  $ is also a harmonic 1-form, we know from Theorem \ref{constant if and only if } that $h^z(\cdot)\equiv0.$
Therefore, 
\begin{align*}
0\leqslant h(x,y)\leqslant h(x,z)+h(z,y)=0, \quad \forall (x,y)\in M\times M.
\end{align*}
The fact that $h(x,y)=0$ for every $(x,y)\in M\times M$ implies that $h(x,x)\equiv0$ for all $x\in M$ and $\delta(x,y)=h(x,y)+h(y,x)\equiv0$ for any $(x,y) \in M\times M.$ This completes the proof.
\end{proof}

Now we recall the following result obtained in \cite[Corollary 4.1.13]{Fathi_Figalli_Rifford2009}.
\begin{Pro}\label{differ by a constant }
Let $L:TM\to\R$ be a Tonelli Lagrangian on a closed connected Riemannian manifold $(M,g).$ 
Then,   the Mather quotient $(\mathcal{A}(L),\sim, \delta)$ is a singleton if and only if any two  weak KAM solutions of  \eqref{eq:HJ_wk} differ by a constant.
\end{Pro}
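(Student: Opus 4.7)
The plan is to reduce the statement to two standard pieces of the Fathi weak KAM dictionary: (i) the domination inequality $u(y)-u(x)\leqslant h(x,y)$ for every weak KAM solution $u$ of \eqref{eq:HJ_wk} and every $x,y\in M$, which follows directly from the inequality $u(\rho(b))-u(\rho(a))\leqslant\int_a^bL(\rho,\dot\rho)\,ds+c[L](b-a)$ recalled in Section~2 by taking an infimizing sequence of curves from $x$ to $y$ and sending $t\to+\infty$; and (ii) for every $y_0\in\mathcal{A}(L)$ the function $h(y_0,\cdot)$ is itself a weak KAM solution with $h(y_0,y_0)=0$, and every weak KAM solution $u$ admits the Peierls representation $u(x)=\inf_{y\in\mathcal{A}(L)}\{u(y)+h(y,x)\}$. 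These are classical results of Fathi. With (i) and (ii) in hand, both implications are algebraic manipulations on $\mathcal{A}(L)$.

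For the forward direction, assume the Mather quotient is a singleton, so $h(y_1,y_2)+h(y_2,y_1)=0$ for all $y_1,y_2\in\mathcal{A}(L)$. Given any weak KAM solution $u$, the two domination inequalities
\begin{align*}
u(y_2)-u(y_1)\leqslant h(y_1,y_2),\qquad u(y_1)-u(y_2)\leqslant h(y_2,y_1)
\end{align*}
sum to $0\leqslant 0$, so both are equalities. Fixing a basepoint $y_0\in\mathcal{A}(L)$, this yields $u(y)=u(y_0)+h(y_0,y)$ for every $y\in\mathcal{A}(L)$, so $u|_{\mathcal{A}(L)}$ is determined up to the additive constant $u(y_0)$. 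Inserting this into the Peierls representation in (ii) extends the conclusion from $\mathcal{A}(L)$ to all of $M$: if $u_1,u_2$ are two weak KAM solutions with $u_1(y_0)=u_2(y_0)$, then $u_1|_{\mathcal{A}(L)}=u_2|_{\mathcal{A}(L)}$ and hence $u_1\equiv u_2$. Therefore any two weak KAM solutions differ by a constant.

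For the reverse direction, fix $y_0\in\mathcal{A}(L)$ and let $y_1\in\mathcal{A}(L)$ be arbitrary. By (ii) both $h(y_0,\cdot)$ and $h(y_1,\cdot)$ are weak KAM solutions, so by hypothesis $h(y_0,\cdot)-h(y_1,\cdot)\equiv c$ for some constant $c\in\R$. Evaluating at $y_0$ with $h(y_0,y_0)=0$ gives $c=-h(y_1,y_0)$; evaluating at $y_1$ with $h(y_1,y_1)=0$ gives $c=h(y_0,y_1)$. Adding, $\delta(y_0,y_1)=h(y_0,y_1)+h(y_1,y_0)=0$, so $y_1\sim y_0$ in the Mather quotient. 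Since $y_1\in\mathcal{A}(L)$ was arbitrary the quotient reduces to the single class $[y_0]$. The proof is essentially a book-keeping exercise, so there is no real obstacle; the only care needed is to make sure the normalization conventions of Section~2 (with $c[L]$ absorbed into the time integral in the definition of $h$ and in the calibration identity) are used consistently, so that the cancellation $h(y_1,y_2)+h(y_2,y_1)=0$ matches exactly the equality case of the two domination inequalities.
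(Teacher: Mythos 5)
Your proof is correct. The paper does not prove this proposition itself but only cites it from \cite[Corollary 4.1.13]{Fathi_Figalli_Rifford2009}, so there is no internal proof to compare against; your argument is the standard weak KAM one, resting on the three classical ingredients you identify (domination $u(y)-u(x)\leqslant h(x,y)$, $h(y_0,\cdot)$ being a weak KAM solution for $y_0\in\mathcal{A}(L)$, and the Peierls representation $u=\inf_{y\in\mathcal{A}(L)}\{u(y)+h(y,\cdot)\}$), and both implications go through exactly as you wrote them.
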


\begin{proof}[Proof of Theorem \ref{mane constant}]
Indeed, the first part of the proof proof has the same reasoning as that of Theorem \ref{constant if and only if }.

If $\omega$ is a harmonic 1-form, then $\operatorname{div}\omega^{\#}(x)=0$ for all $x\in M$.  Invoking Theorem \ref{solution and divergence}, each weak KAM solution  $u$  of \eqref{eq:HJ_wk} is superharmonic. Therefore, $u$ must be constant.

Conversely, if each weak KAM solution $u$ of \eqref{eq:HJ_wk} is constant, Theorem \ref{solution and divergence} implies that $0\leqslant-\operatorname{div}\omega^{\#}(x)    $ for all $x\in M$. Appying Stoke's Theorem one can derive that $   \operatorname{div}\omega^{\#}\equiv0$. Thus $\omega$ is a harmonic 1-form.

Notice that the constant $0$ is a weak KAM solution of \eqref{eq:HJ_wk}. Together with Proposition \ref{differ by a constant }, we find that the Mather quotient $(\mathcal{A}(L),\sim, \delta)$ is a singleton if and only if $ \omega$ is a harmonic 1-form.

Finally, we turn to the rest of the proof. Since $(M,g)$ has nonnegative Ricci curvature and $ \omega$ is a harmonic 1-form, by Proposition \ref{some facts about Hodge Cohomology} (d) we obtain that $f(x)=\frac{1}{2} g_x(\omega^\sharp,\omega^\sharp)$ is constant. This implies that $\operatorname{Ric}(v)+\Delta f(x)\geqslant0 $ for all $(x,v)\in TM$. Hence, $u$ must be constant by the first part of the proof.
\end{proof}
\appendix
\section{proofs of statements on conjugate points and Jacobi equation}

\begin{proof}[Proof of Proposition \ref{pro:Jabobi_field}]
If $J$ is a Jacobi field along $\rho$, then one can find a variation $\rho(t,s)\in C^2([a, b] \times(-\varepsilon,\varepsilon))$ of extremal curves along $\rho$ such that
\begin{align*}
J(t)=\left.\frac{\partial}{\partial s}\right|_{s=0}\rho(t,s).
\end{align*} 
Thus we have
\begin{align*}
\frac{d}{dt}L_v(\rho(t,s),\dot{\rho}(t,s))=L_x(\rho(t,s),\dot{\rho}(t,s))
\end{align*}
in local chart. Taking partial derivative with respect to $s$ at $s=0$, we get
\begin{align*}
\frac{d}{dt}(L_{vx}(\rho(t),\dot{\rho}(t))J(t)+L_{vv}(\rho(t),\dot{\rho}(t))\dot{J}(t))=L_{xx}(\rho(t),\dot{\rho}(t))J(t)+L_{xv}(\rho(t),\dot{\rho}(t))\dot J(t).
\end{align*}

Now we turn to prove the sufficiency. Suppose $J$ solves \ref{eq:Jacobi} in local chart. We want to define a variation $\rho(t,s)$ such that
\begin{align*}
J(t)=\left.\frac{\partial}{\partial s}\right|_{s=0}\rho(t,s).
\end{align*} 
For this, choose a smooth curve $\alpha:(-\varepsilon,\varepsilon)\to M$ and a smooth vector field $V$ along $\alpha$ satisfying
\begin{gather*}
\alpha(0)=\rho(a),\ \dot\alpha(0)=J(a),\\
V(0)=\dot{\rho}(a),\ \nabla_{\dot\alpha}V(0)=\nabla_{\dot{\rho}}J(a).
\end{gather*}
Now we define
\begin{align*}
\rho(t,s):=\pi\circ\Phi_{t-a}^L(\alpha(s),V(s))\in C^2([a, b]\times(-\varepsilon,\varepsilon)).
\end{align*} 
Then we have
\begin{align*}
\left.\frac{\partial}{\partial t}\right|_{t=a}\rho(t,s)=V(s), \ \left.\frac{\partial}{\partial s}\right|_{s=0}\rho(a,s)=\dot\alpha(0).
\end{align*}
Set
\begin{align*}
W(t):=\left.\frac{\partial}{\partial s}\right|_{s=0}\rho(t,s).
\end{align*}
Note that
\begin{align*}
W(a)=\left.\frac{\partial}{\partial s}\right|_{s=0}\rho(a,s)=\dot\alpha(0)=J(a)
\end{align*}
and $J,W $ solves \ref{eq:Jacobi} in local coordinate systems. If we can show that	$ \nabla_{\dot\rho}W(a)=\nabla_{\dot{\rho}}J(a)$, it then follows from Cauchy-Lipschitz theorem that $J(t)=W(t)$. Indeed, we have
\begin{align*}
\nabla_{\dot\rho}W(a)=\nabla_{\partial_t\rho} \partial_s\rho(a,0)=\nabla_{\partial_s\rho} \partial_t\rho(a,0)=\nabla_{\dot\alpha}V(0)=\nabla_{\dot{\rho}}J(a).
\end{align*} 
It follows that $W\equiv J$ as claimed.   	
\end{proof}

\begin{proof}[Proof of Proposition \ref{pro:Jacobi_formula}]
Consider the $C^2$ variation
\begin{align*}
\rho(s,z)=\pi\circ\Phi_{s-a}^L(\rho(a),\dot{\rho}(a)+z\nabla_{\dot{\rho}(a)}J)\in C^2([a, b]\times(-\varepsilon,\varepsilon)) .
\end{align*}
We have
$$
\begin{gathered}
\left.\partial_z \right|_{z=0} \rho(a, z)=\left.\partial_z \right|_{z=0} \rho(a)=0, \\
\nabla_{\partial_s \rho} \partial_z \rho(a, 0)=\nabla_{  \partial_z \rho        } \partial_s \rho(a, 0)=\left.\partial_z\right|_{z=0} \dot{\rho}(a)+z \nabla_{\dot{\rho}(a)} J=\nabla_{\dot{\rho}(a)}J.
\end{gathered}
$$

Notice that $\left.\partial_z \right|_{z=0}\rho(s,z)$ solves the Jacobi equation \eqref{eq:Jacobi} in local chart and
\begin{align*}
&\,\left.\partial_z\right|_{z=0}\rho(a,z)=J(a)=0,\\
&\,\nabla_{\partial_s\rho}\partial_z\rho(a,0)=\nabla_{\dot{\rho}(a)}J.
\end{align*}
We obtain that
\begin{align*}
J(s)=\left.\partial_z\right|_{z=0}\rho(s,z),\quad s\in[a, b]
\end{align*}
by Cauchy-Lipschitz theorem.
\end{proof} 

\begin{proof}[Proof of Proposition \ref{pro:reverse}]
We only need to prove the necessity since its sufficiency can be proved similarly.

Suppose that $(\rho(b),\dot{\rho}(b))$ is conjugate to $(\rho(b),\dot{\rho}(b))$ with respect to $L$, then there exists a nonzero Jacobi field $J$ along $\rho$ such that $ \rho(a)=\rho(b)=0$. Set $\breve{\rho}(t)=\rho(a+b-t)$, $t\in [a,b]$. Then $(\breve{\rho}(t),\dot{\breve{\rho}}(t))$ is a trajectory of the Euler-Lagrange flow associated to $\breve{L}$. 

Since $J:[a,b]\to TM$ is a Jacobi field along $\rho(t)$ with respect to $L$, one has that $\breve{J}(t):=J(a+b-t)$ is a Jacobi field along $\breve{\rho}(t)$ with respect to $\breve {L}$. Indeed, by a direct computation we have
\begin{align*}
\frac{d}{dt}(\breve{L}_{vx}(\breve{\rho}(t),\dot{\breve{\rho}}(t))\breve{J}(t)+\breve{L}_{vv}(\breve{\rho}(t),\dot{\breve{\rho}}(t))\dot{\breve{J}}(t)) =\breve{L}_{xx}(\breve{\rho}(t),\dot{\breve{\rho}}(t))\breve{J}(t)+\breve{L}_{xv}(\breve{\rho}(t),\dot{\breve{\rho}}(t))\dot{\breve{J}}(t) 
\end{align*}
in local chart. This implies that there is a nonzero Jacobi field $\breve{J}$ along $\breve{\rho}$ with respect to $\breve{L}$ satisfying $\breve{\rho}(a)=\breve{\rho}(b)=0$.    It follows that $(\breve{\rho}(a),\dot {\breve{\rho}}(a))$ is conjugate to $  (\breve{\rho}(b),\dot {\breve{\rho}}(b))$ with respect to $\breve{L}$. In other words, $(\rho(a),-\dot{\rho}(a))$ is conjugate to $(\rho(b),-\dot{\rho}(b))$ with respect to $\breve{L}$.
\end{proof}

\begin{proof}[Proof of Proposition \ref{pro:2nd_var}]

For any variation $\rho(s,z)\in C^{\infty}([0,t]\times(-\varepsilon,\varepsilon)\to M)$ of $\rho(s)$ such that
\begin{align*}
\rho(s,0)=\rho(s), \ \rho(0,z)\equiv\rho(0),\ \rho(t,z)\equiv\rho(t),
\end{align*}
we have
\begin{align*}
\left.\frac{d}{dz}\right|_{z=0}\int_0^tL(\rho,\partial_s\rho)+\omega(\partial_s\rho)ds=\left.\frac{d}{dz}\right|_{z=0}\int_0^tL(\rho,\partial_s\rho)ds,
\end{align*}
since $\omega$ is closed. Therefore,
\begin{align*}
0=&\,\left.\frac{d}{dz}\right|_{z=0}\int_0^tL(\rho,\partial_s\rho)+\omega(\partial_s\rho)ds=\left.\frac{d}{dz}\right|_{z=0}\int_0^tL(\rho,\partial_s\rho)ds\\
=&\,\left.\frac{d}{dz}\right|_{z=0}\int_0^t\frac 12g(\partial_s\rho(s,z),\partial_s\rho(s,z))-f(\rho(s,z))ds\\
=&\,\int_0^tg(\left.\nabla_{\partial_z\rho}\partial_s\rho\right|_{z=0},\dot{\rho})-g(\nabla f(\rho),\left.\partial_z\rho\right|_{z=0})ds\\
=&\,\int_0^tg(\left.\nabla_{\partial_s\rho}\partial_z\rho\right|_{z=0},\dot{\rho})-g(\nabla f(\rho),\left.\partial_z\rho\right|_{z=0})ds\\
=&\,\int_0^t\frac{d}{ds}g(\left.\partial_z\rho\right|_{z=0},\dot{\rho})-g(\left.\partial_z\rho\right|_{z=0},\nabla_{\dot{\rho}}\dot{\rho})-g(\nabla f(\rho),\left.\partial_z\rho\right|_{z=0})ds\\
=&\,\int_0^tg(-\nabla f(\rho)-\nabla_{\dot{\rho}}\dot{\rho},\left.\partial_z \rho\right|_{z=0})ds.
\end{align*}
which yields that
\begin{align*}
\nabla_{\dot{\rho}}\dot{\rho}=-\nabla f.
\end{align*}

Suppose $\rho(s,z)\in C^{2}([a, b]\times(-\varepsilon,\varepsilon),M)$ is a variation such that $\rho(\cdot,z)$, $z\in(-\varepsilon,\varepsilon)$ satisfy \eqref{eq:second_law}. Then we have
\begin{align*}
\nabla_{\partial_s\rho}\nabla_{\partial_s\rho}\partial_z\rho=&\,\nabla_{\partial_s\rho}\nabla_{\partial_z\rho}\partial_s\rho\\
=&\,\nabla_{\partial_z\rho}\nabla_{\partial_s\rho}\partial_s\rho+R(\partial_s\rho,\partial_z\rho)\partial_s\rho\\
=&\,-\nabla_{\partial_z\rho}\nabla f+R(\partial_s\rho,\partial_z\rho)\partial_s\rho\\
=&\,-\operatorname{Hess}f(\partial_z\rho)+R(\partial_s\rho,\partial_z\rho)\partial_s\rho.
\end{align*}
Taking $z=0$ we obtain that
\begin{align*}
\nabla_{\dot{\rho}}\nabla_{\dot{\rho}}J+R(J,\dot{\rho})\dot{\rho}+\operatorname{Hess}f(J)=0, 
\end{align*}
where $J=\left.\partial_z\rho\right|_{z=0}$.
\end{proof}

\begin{proof}[Proof of Lemma \ref{lem:not_conjugate}]
Suppose $J:[a, b]\to TM$ is a Jacobi field along $\rho$ with $J(a)=0$. Set $(x,v)=(\rho(a),\dot{\rho}(a))$, $w=\nabla_{\dot{\rho}(a)}J$. Then we have
\begin{align*}
\left.\partial_z\right|_{z=0}\pi\circ\Phi_{\tau-a}^L(x,v+zw)=d_v(\pi\circ\Phi_{\tau-a}^L)(w)=J(\tau),\qquad \tau\in[a,b].
\end{align*}
Observe that $J$ is nonzero if and only if $w\neq 0$. Therefore, $(\rho(b),\dot{\rho}(b))$ is conjugate to $(\rho(a),\dot{\rho}(a))$ if and only if
\begin{align*}
J(b)=d_v(\pi\circ\Phi_{b-a}^L)(w)=0.
\end{align*}
i.e., if and only if $d_v(\pi\circ\Phi_{b-a}^L)$ is degenerate. Hence, $(\rho(b),\dot{\rho}(b))$ is not conjugate to $(\rho(a),\dot{\rho}(a))$ if and only if $d_v(\pi\circ\Phi_{b-a}^L)$ is non-degenerate.
\end{proof}

\begin{proof}[Proof of Proposition \ref{pro:before_conjugate}]
Since $(\rho(t),\dot{\rho}(t))$ is not conjugate to $(\rho(0),\dot{\rho}(0))$, we can find a neighborhood $W$ of $\dot{\rho}(0)$ such that $\left.\pi\circ\Phi_t^L\right|_W$ is a $C^1$ diffeomorphism.

Now we claim that there exists a neighborhood $U$ of $\rho(t)$ satisfying the following property: if $z\in U$ and $\rho_z$ is a minimizer of $A_t(x,z)$, then we have $\dot{\rho}_z(0)\in W$. If no such neighborhood exists, we can find a sequence $\{z_i\}$ converging to $\rho(t)$ with the following property: for each $z_i$ we can find a minimal curve $\rho_{z_i}$ for $A_t(x, z_i)$ with $\dot{\rho}_{z_i}(0)\notin W$. By (3) of Proposition \ref{pro:differential} we can just suppose $\dot{\rho}_{z_i}(0)\to v_1$ as $i\to+\infty$. By continuous dependence,
\begin{align*}
\int_0^tL(\Phi_s^L(x,\dot{\rho}_{z_i}(0))ds=A_t(x, z_i)
\end{align*}
converges to
\begin{align*}
\int_0^t L(\Phi_s^L(x,v_1))ds=A_t(x,\rho(t))
\end{align*}
as $i\to+\infty$.

This implies that $\Phi_s^L(x,v_1):[0,t]\to M$ is a minimal curve for $A_t(x, \rho(t))$. Hence, $v_1=\dot{\rho}(0)$ since $A_t(x,\cdot)$ is differentiable at $\rho(t)$. But this contradicts the assumption that $\dot{\rho}_{z_i}(0)\notin W$. From this claim we have that for each $z\in U$,
\begin{align*}
A_t(x,z)=\int_0^tL(\Phi_s^L(x,v_z))ds,
\end{align*}
where $v_z$ is uniquely determined by the condition $v_z\in W$, $\pi\circ\Phi_t^L(x,v_z)=z$. In addition, we have $d_yA_t(x,z)=L_v(\Phi_t^L(x,v_z))$ by Proposition \ref{pro:differential}. Therefore. $A_t(x,\cdot)$ is of $C^r$ in $U$ since $U\ni z\mapsto v_z$ is a $ C^{r-1}$ diffeomorphism.
\end{proof}

\begin{proof}[Proof of Lemma \ref{lem:cut}]
Let $\tilde{\rho}(s)=\pi\circ\Phi_s^L(\rho(0),\dot{\rho}(0))$, $s\in[0,+\infty)$. Take a minimal curve $\rho_\tau:[0,t+\tau]\to M$ for $A_{t+\tau}(x,\tilde{\rho}(t+\tau))$ for each fixed $\tau>0$. By (3) of Proposition \ref{pro:differential} we assume that $\dot{\rho}_{\tau_i}(0)$ converges to $v$ as $i\to+\infty$ where $\{\tau_i\}$ is a positive sequence converging to $0$ .

By continuous dependence,
\begin{align*}
\int_0^{t+\tau_i}L( \Phi_s^L(\rho_{\tau_i}(0),\dot{\rho}_{\tau_i}(0)))ds=A_{t+\tau_i}(x,\tilde{\rho}(t+\tau))
\end{align*}
converges to
\begin{align*}
\int_0^tL(\Phi_s^L(x,v)ds=A_t(x,y)
\end{align*}
as $i\to+\infty$.

Then, we can find another minimal curve $\tilde{\rho}(s)=\pi\circ\Phi_s^L(x,v)$, $s\in[0, t]$, for$A_t(x, y)$ if $v\neq\dot{\rho}(0)$. If $\dot{\rho}(0)=v$, we must show that $(\rho(t),\dot{\rho}(t))$ is conjugate to $(\rho(0), \dot{\rho}(0))$. Otherwise, we suppose that $(\rho(t), \dot{\rho}(t))$ is not conjugate to $(\rho(0), \dot{\rho}(0))$. Then, $d_{\dot{\rho}(0)}(\pi\circ \Phi_t^L)$ is non-degenerate. Hence, there exists an open neighborhood $(t-\epsilon,t+\epsilon)\times W $ of $(t,\dot{\rho}(0))$ such that $d_w(\pi\circ\Phi_r^L)$ is non-degenerate for every $(r,w)\in(t-\epsilon,t+\epsilon )\times W$. By constant rank theorem we can know that $\pi\circ\Phi_r^L:W\to M$ is a $C^{r-1}$ diffeomorphism for all $r\in(t-\epsilon, t+\epsilon)$.

Together with
\begin{align*}
\pi\circ\Phi_{t+\tau_i}^L(x,\dot{\rho}_{\tau_i}(0))=\tilde{\rho}(t+\tau_i)=\pi\circ\Phi_{t+\tau_i}^L(x,\dot{\rho}(0)),
\end{align*}
we obtain that $\dot{\rho}(0)=\dot{\rho}_{\tau_i}(0)$ for $\tau_i\in(0,\epsilon)$. This means that $\tilde{\rho}_{\left.\right|_{[0, t+\tau_i]}}$ is a minimal curve for $A_{t+\tau_i}(x, \tilde{\rho}(t+\tau_i))$ which leads to a contradiction.
\end{proof}
\section{Index Form}

It is well known that any two points in the interior of a minimal curve $\rho:[0,t]\to M$ for $A_t(x,y)$, with $\rho(0)=x$ and $\rho(t)=y$, are not conjugate to each other. This is also true for a point $\gamma(t)$ with $t\in(0,t)$ and a endpoint $x=\gamma(0)$ or $y=\gamma(t)$ even if $x$ and $y$ is conjugate to each other  (see, for instance, \cite[Corollary 4.2]{Contreras_Iturriaga1999}). We still give a detailed proof of the form of statement we need in Lagrangian scheme. For the following notion of index  form, see \cite{Duistermaat1976}. 

\begin{defn}
Suppose $L: T M \rightarrow \mathbb{R}$ is a Tonelli Lagrangian defined on an open subset $M$ of $\mathbb{R}^n$. Let $\rho:[a, b] \rightarrow M$ be a $C^1$ curve satisfying \eqref{E-L}. For any two continuous piecewise $C^2$ vector fields $\eta, \theta$ along $\rho$, define the index form $I(\eta, \theta)$ by
\begin{align*}
I(\eta, \theta)=\int_a^b \dot{\eta}^{\top} L_{vv} \dot{\theta}+\dot{\eta}^{\top} L_{v x} \theta+\eta^{\top} L_{xv} \dot{\theta}+\eta^{\top} L_{xx} \theta d t .
\end{align*}
\end{defn} 

\begin{Lem}\label{Index form of Jacobi fields}
Assume that $L: T M \rightarrow \mathbb{R}$ is a Tonelli Lagrangian defined on an open subset $M$ of $\mathbb{R}^n$. Let $\rho:[a, b] \rightarrow M$ be a $C^1$ curve satisfying \eqref{E-L} and let $\eta,\theta$ are continuous piecewise $C^2$ vector fields along $\rho$. For any partition $a=t_0<t_1<\cdots<t_{k+1}=b$ such that the vector fields $\eta, \theta$ along $\rho$ are of $C^2$ on $[t_i, t_{i+1}], i=1,2, \cdots k$, we have
\begin{align*}
I(\eta, \theta)=\sum_{i=1}^{k+1}\left.(\dot{\eta}^{\top} L_{v v}+\eta^{\top} L_{x v})\theta\right|_{t_i^{+}} ^{t_{i+1}^{-}}
\end{align*}
where $\eta|_{[t_i, t_{i+1}]}$ are Jacobi fields along $\rho$. 
\end{Lem}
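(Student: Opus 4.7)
The plan is to integrate by parts on each subinterval of the partition, leveraging the fact that $\eta$ satisfies the Jacobi equation on each piece (Proposition~\ref{pro:Jabobi_field}). First, since $\eta$ and $\theta$ are $C^2$ on each closed subinterval $[t_i, t_{i+1}]$, I would split the integral defining $I(\eta,\theta)$ into a finite sum of integrals over these pieces, interpreting the derivatives at the endpoints as one-sided limits. The goal is then to show that on each subinterval the integrand is an exact time derivative whose antiderivative is precisely the quantity appearing in the claimed boundary terms.

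The heart of the argument is the following identity. By the product rule,
\begin{align*}
\frac{d}{dt}\bigl[(\dot\eta^\top L_{vv}+\eta^\top L_{xv})\theta\bigr]
=\frac{d}{dt}(\dot\eta^\top L_{vv}+\eta^\top L_{xv})\,\theta+(\dot\eta^\top L_{vv}+\eta^\top L_{xv})\dot\theta.
\end{align*}
Since $\eta|_{[t_i,t_{i+1}]}$ is a Jacobi field, it satisfies \eqref{eq:Jacobi} on the interior $(t_i,t_{i+1})$. Taking the transpose of that equation and invoking the symmetries $L_{vv}^\top=L_{vv}$, $L_{xx}^\top=L_{xx}$, and $L_{vx}^\top=L_{xv}$, I would obtain
\begin{align*}
\frac{d}{dt}(\dot\eta^\top L_{vv}+\eta^\top L_{xv})=\dot\eta^\top L_{vx}+\eta^\top L_{xx}.
\end{align*}
Substituting this back into the product-rule identity above reveals that the right-hand side coincides exactly with the integrand $\dot\eta^\top L_{vv}\dot\theta+\dot\eta^\top L_{vx}\theta+\eta^\top L_{xv}\dot\theta+\eta^\top L_{xx}\theta$ of the index form.

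It then remains only to apply the fundamental theorem of calculus on each $[t_i,t_{i+1}]$, which produces
\begin{align*}
\int_{t_i}^{t_{i+1}}(\dot\eta^\top L_{vv}\dot\theta+\dot\eta^\top L_{vx}\theta+\eta^\top L_{xv}\dot\theta+\eta^\top L_{xx}\theta)\,dt
=\bigl.(\dot\eta^\top L_{vv}+\eta^\top L_{xv})\theta\bigr|_{t_i^{+}}^{t_{i+1}^{-}},
\end{align*}
and to sum over the subintervals. I expect the main point of care to be the transposition step, where the Hessian-block symmetries of $L$ must be tracked carefully; a secondary technical point is that $\dot\eta$ and $\dot\theta$ may be discontinuous at the interior partition points $t_i$, but this is precisely what the one-sided limits $t_i^{+}$ and $t_{i+1}^{-}$ in the boundary terms are designed to record, so no jump contribution is lost and no additional assumption about matching derivatives at $t_i$ is needed.
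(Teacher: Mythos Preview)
Your proposal is correct and is essentially the same argument as the paper's: both rely on the (transposed) Jacobi equation to show that the integrand of $I(\eta,\theta)$ equals $\frac{d}{dt}\bigl[(\dot\eta^\top L_{vv}+\eta^\top L_{xv})\theta\bigr]$ on each subinterval, then apply the fundamental theorem of calculus piecewise. The paper phrases this as grouping terms and integrating by parts rather than recognising an exact derivative, but the computation and the use of the Hessian symmetries are identical.
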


\begin{proof}
Since $\eta|_{[t_i, t_{i+1}]}$ are Jacobi fields along $\rho$, we have
\begin{align*}
\frac{d}{d t}(L_{v x} \eta+L_{v v} \dot{\eta})=L_{x x} \eta+L_{x v} \dot{\eta}, \quad t \in[t_i, t_{i+1}].
\end{align*}
Therefore
\begin{align*}
I(\eta, \theta)= & \int_a^b \dot{\eta}^{\top} L_{v v} \dot{\theta}+\dot{\eta}^{\top} L_{v x} \theta+\eta^{\top} L_{xv} \dot{\theta}+\eta^{\top} L_{xx }\theta d t \\
= & \int_a^b(\dot{\eta}^{\top} L_{v v}+\eta^{\top} L_{xv})\dot{\theta}+(\dot{\eta}^{\top} L_{v x}+\eta^{\top} L_{x x})\theta d t \\
= & \int_a^b-\frac{d}{d t} (\dot\eta^{\top} L_{v v}+\eta^{\top} L_{x v})\theta+(\dot{\eta}^{\top} L_{v x}+\eta^{\top} L_{x x})\theta d t \\
& +\sum_{i=1}^k\left.(\dot{\eta}^{\top} L_{v v}+\eta^{\top} L_{x v})\theta\right|_{t_i^+} ^{t_{i+1}^-} \\
= & \sum_{i=1}^k\left.(\dot{\eta}^{\top} L_{v v}+\eta^{\top} L_{x v})\theta\right|_{t_i^{+}} ^{    t_{i+1}^-   } .
\end{align*}
This completes the proof.
\end{proof}

\begin{Pro}\label{second variaiton}
Let $L: T M \rightarrow \mathbb{R}$ be a Tonelli Lagrangian on an open subset $M $ of $\mathbb{R}^n$ and let $\rho:[a, b] \rightarrow M$ be a $C^1$ curve satisfying \eqref{E-L}. If
\begin{align*}
\alpha:[a, b] \times(-\varepsilon, \varepsilon)\to&\, M \\
(t, s)\mapsto&\, \alpha(t, s) .
\end{align*}
is a continuous piecewise $C^3$ variation of $\rho$ such that
\begin{enumerate}[\rm (i)]
\item $\alpha(t, 0)=\rho(t), t \in[a, b]$,
\item there exists a partition $a=t_0<t_1<\cdots<t_{k+1}=b$ such that $\alpha$ is of  $C^3$ on $[t_i, t_{i+1}] \times(-\varepsilon, \varepsilon), i=1,2, \cdots, k$.
\end{enumerate}
Then we have
\begin{align*}
\left.\frac{\partial^2}{\partial s^2}\right|_{s=0} \int_a^b L(\alpha, \frac{\partial \alpha}{\partial t})\ dt=I\bigg(\left.\frac{\partial \alpha}{\partial s}\right|_{s=0} , \left.\frac{\partial \alpha}{\partial s}\right|_{s=0}\bigg)+\sum_{i=1}^k \left. L_v \frac{\partial^2 \alpha}{\partial s^2}\right|_{t_i^{+}} ^{t_{i+1}^-} .
\end{align*}
\end{Pro}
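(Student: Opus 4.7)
The plan is a direct computation: differentiate under the integral sign twice, then apply integration by parts together with the Euler--Lagrange equation to collapse the extra second-derivative terms into boundary contributions. The piecewise structure is handled by running the smooth argument on each sub-interval and summing.

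First, on each sub-interval $[t_i, t_{i+1}]$ where $\alpha$ is of class $C^3$, I differentiate $\int_{t_i}^{t_{i+1}} L(\alpha, \partial_t\alpha)\,dt$ with respect to $s$ under the integral sign. Writing $\alpha_s$, $\alpha_t$, $\alpha_{ss}$, $\alpha_{ts}$, $\alpha_{tss}$ for the partial derivatives, the first derivative is $\int_{t_i}^{t_{i+1}} (L_x\alpha_s + L_v\alpha_{ts})\,dt$. Differentiating once more and evaluating at $s=0$, where $\alpha|_{s=0} = \rho$, $\alpha_s|_{s=0} = \eta := \partial_s\alpha|_{s=0}$, and $\alpha_{ts}|_{s=0} = \dot\eta$, I obtain
\begin{align*}
\int_{t_i}^{t_{i+1}} \left(\eta^{\top} L_{xx}\eta + \eta^{\top} L_{xv}\dot\eta + \dot\eta^{\top} L_{vx}\eta + \dot\eta^{\top} L_{vv}\dot\eta\right) dt + \int_{t_i}^{t_{i+1}} \left(L_x \alpha_{ss}|_{s=0} + L_v \alpha_{tss}|_{s=0}\right) dt,
\end{align*}
where the coefficients $L_{xx}, L_{xv}, L_{vx}, L_{vv}$ are evaluated along $(\rho, \dot\rho)$.

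The first integrand is exactly the integrand of the index form $I(\eta,\eta)$. For the remaining two terms, I integrate the $L_v\alpha_{tss}$ piece by parts on each sub-interval. Since $\rho$ satisfies the Euler--Lagrange equation, $\frac{d}{dt}L_v(\rho,\dot\rho) = L_x(\rho,\dot\rho)$, so that
\begin{align*}
\int_{t_i}^{t_{i+1}} L_v \alpha_{tss}|_{s=0}\,dt = \left.L_v \alpha_{ss}|_{s=0}\right|_{t_i^{+}}^{t_{i+1}^{-}} - \int_{t_i}^{t_{i+1}} L_x \alpha_{ss}|_{s=0}\,dt.
\end{align*}
The integral on the right cancels the $L_x\alpha_{ss}|_{s=0}$ contribution already present. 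Summing over $i = 0, 1, \ldots, k$ and assembling the surviving boundary terms yields the desired identity, with the boundary contributions precisely $\left.L_v \partial_s^2\alpha\right|_{t_i^{+}}^{t_{i+1}^{-}}$.

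The main obstacle is purely bookkeeping: verifying that the $C^3$ regularity on each closed sub-interval justifies both differentiations under the integral and the one-sided boundary limits $t_i^{\pm}$, and keeping careful track of signs and endpoint indices so that the boundary sum assembles correctly. There is no real analytical difficulty---the content is the classical second variation formula of Lagrangian mechanics, and the piecewise-smooth extension is obtained by applying the smooth argument on each $[t_i, t_{i+1}]$ and summing.
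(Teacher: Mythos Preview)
Your proposal is correct and follows essentially the same route as the paper: differentiate twice under the integral sign, integrate the $L_v\,\partial_t\partial_s^2\alpha$ term by parts on each sub-interval, and use the Euler--Lagrange equation at $s=0$ to cancel the $L_x\,\partial_s^2\alpha$ contribution, leaving only the boundary terms. The only cosmetic difference is that you carry out the computation sub-interval by sub-interval and then sum, whereas the paper writes the integral over $[a,b]$ in one line and lets the integration by parts automatically produce the jumps at the interior nodes.
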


\begin{proof}
Notice that
\begin{align*}
\frac{\partial}{\partial s} \int_a^b L(\alpha, \frac{\partial \alpha}{\partial t})d t=\int_a^b L_x \frac{\partial \alpha}{\partial s}+L_v \frac{\partial^2 \alpha}{\partial s \partial t}\, dt.
\end{align*}
Then,
\begin{align*}
\frac{\partial^2}{\partial s^2} \int_a^b L(\alpha, \frac{\partial \alpha}{\partial t})d t= & \int_a^b L_{x x} \frac{\partial \alpha}{\partial s} \cdot \frac{\partial \alpha}{\partial s}+L_{x v} \frac{\partial^2 \alpha}{\partial s \partial t} \cdot \frac{\partial \alpha}{\partial s}+L_{v x} \frac{\partial \alpha}{\partial s} \cdot \frac{\partial^2 \alpha}{\partial s \partial t}+L_{v v} \frac{\partial^2 \alpha}{\partial s \partial t} \cdot \frac{\partial^2 \alpha}{\partial s \partial t} \\
& +L_x \frac{\partial^2 \alpha}{\partial s^2}+L_v \frac{\partial^3 \alpha}{\partial s^2 \partial t} d t \\
= & \int_a^b L_{x x} \frac{\partial \alpha}{\partial s} \cdot \frac{\partial \alpha}{\partial s}+L_{x v} \frac{\partial^2 \alpha}{\partial s \partial t} \cdot \frac{\partial \alpha}{\partial s}+L_{v x} \frac{\partial \alpha}{\partial s} \cdot \frac{\partial^2 \alpha}{\partial s \partial t}+L_{v v} \frac{\partial^2 \alpha}{\partial s \partial t} \cdot \frac{\partial^2 \alpha}{\partial s \partial t} \\
& +L_x \frac{\partial^2 \alpha}{\partial s^2}-\Big(\frac{d}{d t} L_v\Big)\frac{\partial^2 \alpha}{\partial s^2} dt+\sum_{i=1}^k \left. L_v \frac{\partial^2 \alpha}{\partial s^2}\right|_{t_i^{+}} ^{t_{i+1}^-} .
\end{align*}
Taking $s=0$, we obtain
\begin{align*}
\left.\frac{\partial^2}{\partial s^2}\right|_{s=0} \int_a^b L(\alpha, \frac{\partial \alpha}{\partial t})d t=I\bigg(\left.\frac{\partial \alpha}{\partial s}\right|_{s=0} , \left.\frac{\partial \alpha}{\partial s}\right|_{s=0}\bigg)+\sum_{i=1}^k \left. L_v \frac{\partial^2 \alpha}{\partial s^2}\right|_{t_i^{+}} ^{t_{i+1}^-} .
\end{align*}
\end{proof}

\begin{Pro}\label{pro:conjugate}
Suppose $L: T M \rightarrow \mathbb{R}$ is a Tonelli Lagrangian defined on a closed connected Riemannian manifold $(M, g)$. Let $\rho:[a, b] \rightarrow M$ be a $C^1$ curve such that
\begin{align*}
A_{b-a}(\rho(z), \rho(b))=\int_a^b L(\rho(s), \dot{\rho}(s)) ds.
\end{align*}
Then, $(\rho(b), \dot{\rho}(b))$ is not conjugate to $(\rho(z), \dot{\rho}(z))$ 
for each $z \in(a, b)$.
\end{Pro}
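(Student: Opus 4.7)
The plan is to argue by contradiction via the classical broken Jacobi field construction. Suppose $(\rho(b),\dot\rho(b))$ is conjugate to $(\rho(z),\dot\rho(z))$ for some $z\in(a,b)$. Then there exists a nonzero Jacobi field $J$ along $\rho|_{[z,b]}$ with $J(z)=J(b)=0$; uniqueness for the linear second order Jacobi equation \eqref{eq:Jacobi} forces $\nabla_{\dot\rho}J(z)\neq 0$. Define the continuous, piecewise $C^{2}$ vector field $\eta$ along $\rho$ by $\eta\equiv 0$ on $[a,z]$ and $\eta=J$ on $[z,b]$. Then $\eta(a)=\eta(b)=0$, and $\eta$ has a corner at $z$ where its covariant derivative jumps from $0$ to $\nabla_{\dot\rho}J(z)$. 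The goal is to perturb $\eta$ into a variation vector field whose associated variation strictly decreases the action of $\rho$, contradicting the minimality hypothesis.

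Working in a tubular neighborhood of $\rho([a,b])$ so that the $\R^{n}$-formulas of Appendix B apply, I first use Lemma \ref{Index form of Jacobi fields} with the partition $a<z<b$: every boundary contribution vanishes because $\eta\equiv 0$ on $[a,z]$ and $\eta(z)=\eta(b)=0$ on $[z,b]$, hence $I(\eta,\eta)=0$. Next choose a smooth vector field $W$ along $\rho$ with $W(a)=W(b)=0$ and $W(z)=\nabla_{\dot\rho}J(z)$, e.g.\ a bump function times the parallel transport of $\nabla_{\dot\rho}J(z)$. A direct integration by parts on $[z,b]$, using the Jacobi equation on each subinterval together with $\eta(a)=\eta(z)=\eta(b)=0$, yields in local coordinates
\begin{align*}
I(\eta,W)=-\dot J(z)^{\top}L_{vv}(\rho(z),\dot\rho(z))\,W(z)=-\dot J(z)^{\top}L_{vv}(\rho(z),\dot\rho(z))\,\dot J(z)<0,
\end{align*}
where the strict negativity uses positive definiteness of $L_{vv}$ (Tonelli condition) and $\nabla_{\dot\rho}J(z)\neq 0$. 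Consequently, for $\epsilon>0$ sufficiently small,
\begin{align*}
I(\eta+\epsilon W,\eta+\epsilon W)=2\epsilon\,I(\eta,W)+\epsilon^{2}I(W,W)<0.
\end{align*}

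To realize this as a genuine variation lowering the action, set $\alpha(t,s):=\exp_{\rho(t)}(s(\eta(t)+\epsilon W(t)))$. This is piecewise $C^{3}$ with the sole corner at $t=z$, fixes the endpoints $\alpha(a,s)=\rho(a)$ and $\alpha(b,s)=\rho(b)$, and has $\partial_{s}\alpha|_{s=0}=\eta+\epsilon W$. Because $\eta+\epsilon W$ is continuous across $z$ (with value $\epsilon\,\nabla_{\dot\rho}J(z)$ there), the coordinate expression $\partial_{s}^{2}\alpha(t,0)=-\Gamma(\rho(t))(V(t),V(t))$, where $V:=\eta+\epsilon W$, likewise has no jump at $z$, so the corner correction in Proposition \ref{second variaiton} vanishes and
\begin{align*}
\frac{\partial^{2}}{\partial s^{2}}\bigg|_{s=0}\int_{a}^{b}L(\alpha,\partial_{t}\alpha)\,dt=I(\eta+\epsilon W,\eta+\epsilon W)<0.
\end{align*}
Since the first variation vanishes along the extremal $\rho$, this forces $\int_{a}^{b}L(\alpha(t,s),\partial_{t}\alpha(t,s))\,dt<A_{b-a}(\rho(a),\rho(b))$ for all small $s\neq 0$, contradicting minimality.

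The main obstacle will be the corner bookkeeping at $t=z$: transferring the Appendix B identities from $\R^{n}$ to the manifold (either by localizing in a tubular neighborhood or by reinterpreting the index form intrinsically via the Riemannian covariant derivative), producing the single surviving boundary term that gives $I(\eta,W)=-\dot J(z)^{\top}L_{vv}\,\dot J(z)$, and verifying that the exponential-map variation really annihilates the corner correction term in Proposition \ref{second variaiton}. Once these technical points are in hand, the rest is the standard Morse-theoretic second variation argument.
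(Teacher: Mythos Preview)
Your proof is correct and follows essentially the same route as the paper: argue by contradiction, extend the Jacobi field by zero to obtain a broken field $\eta$ (the paper's $\hat J$) with $I(\eta,\eta)=0$, add a smooth perturbation $W$ (the paper's $E$) with $W(z)=\dot J(z)$ to produce the single surviving boundary term $I(\eta,W)=-\dot J(z)^{\top}L_{vv}\dot J(z)<0$, and conclude $I(\eta+\epsilon W,\eta+\epsilon W)<0$ for small $\epsilon$, contradicting minimality via Proposition~\ref{second variaiton}. The only cosmetic difference is that the paper first localizes to $\mathbb{R}^n$ and uses the linear variation $\alpha(t,s)=\rho(t)+sV(t)$ (for which $\partial_s^2\alpha\equiv 0$ kills the corner term outright), whereas you use the exponential map and check continuity of $\partial_s^2\alpha$ at $z$; both are fine.
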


\begin{proof}
Since there exists a smooth map $\sigma:[a, b] \times U \rightarrow M$ with $U$ an open neighborhood of $0\in\mathbb{R}^n$, such that $y \mapsto \sigma(t, y)$, mapping 0 to $\rho(t)$, is a diffeomorphism from $U$ to an open neighborhood of $\rho(t)\in M$, the computations can be reduced to the case that $M$ is an open subset of $\mathbb{R}^n$.

Now, suppose that $(\rho(b), \dot{\rho}(b))$ is conjugate to $(\rho(z), \dot{\rho}(z))$ for some $z \in(a, b)$. Then there exists a nonzero Jacobi field $J:[z, b] \rightarrow T M$ along $\rho$ with $J(z)=J(b)=0$. Consider the continuous piecewise $C^2$ vector field
\begin{align*}
\hat{J}(t)=\left\{\begin{array}{cc}
	0, & \text { if } t \in[a, z), \\
	J(t), & \text { if } t \in[z, b],
\end{array}\right.
\end{align*}
and a smooth vector field $E$ along $\rho$ such that $E(a)=E(b)=0$ and $E(z)=\dot J(z) \neq 0$. Set $V(t)=\hat J(t)+\lambda E(t),\ t \in[a, b]$ and
\begin{align*}
\alpha(t, s)=\rho(t)+s V(t), \quad(t, s) \in[a, b] \times(-\varepsilon, \varepsilon) .
\end{align*}
Then we have $\alpha(t, 0)=\rho(t), \alpha(a, s)=\rho(a), \alpha(b, s)=\rho(b)$ and $\frac{\partial \alpha}{\partial s}=V$. For the action
\begin{align*}
A_\alpha(s):=\int_a^b L(\alpha(t, s), \frac{\partial \alpha}{\partial t}(t, s))d t,
\end{align*}
we obtain that
\begin{align*}
\frac{d}{ds}	A_\alpha(0) & =\int_a^b L_x \frac{\partial \alpha}{\partial s}+L_v \frac{\partial^2 \alpha}{\partial s \partial t} d t \\
& =\int_a^b L_x \frac{\partial \alpha}{\partial s}-\big(\frac{d}{d t} L_v\big)\frac{\partial \alpha}{\partial s} d t+\left.L_v \frac{\partial \alpha}{\partial s}\right|_a ^{z^-} +\left.L_v \frac{\partial \alpha}{\partial s}\right|_{z^+} ^b=0.
\end{align*}
Moreover, by Proposition \ref{second variaiton} and Lemma \ref{Index form of Jacobi fields} one has
\begin{align*}
\frac{d^2}{ds^2}	A_\alpha(0) & =I\big(\left.\frac{\partial \alpha}{\partial s}\right|_{s=0} , \left.\frac{\partial \alpha}{\partial s}\right|_{s=0}\big)+\left.L_v \frac{\partial^2 \alpha}{\partial s^2}\right|_a ^{z^-} +\left.L_v \frac{\partial^2 \alpha}{\partial s^2}\right|_{z^+} ^b=I(V, V) \\
& =I(\hat J+\lambda E, \hat J+\lambda E) \\
& =I(\hat J, \hat J)+2 \lambda I(\hat J, E)+\lambda^2 I(E, E) \\
& =2 \lambda I(\hat J, E)+\lambda^2 I(E, E) \\
& =-2 \lambda(\dot J^{\top}(z) L_{v v}\dot J(z))+\lambda^2 I(E, E) .
\end{align*}
Therefore, $\ddot{\rho}(0)<0$ if $0<\lambda<2 \dot J^{\top}(z) L_{v v}\dot J(z)  / |I(E, E)|$. This implies that $\rho_{\mid[a, b]}$ is not minimizing. This leads to a contradiction.  
\end{proof}

With similar argument one also has the following statement.

\begin{Cor}
Suppose $L: T M \rightarrow \mathbb{R}$ is a Tonelli Lagrangian defined on a closed connected Riemannian manifold $(M, g)$. Let $\rho:[a, b] \to M$ be a $C^1$ curve such that
\begin{align*}
A_{b-a}(\rho(a), \rho(z))=\int_a^b L(\rho(s), \dot{\rho}(s))\ ds.
\end{align*}
Then, $(\rho(z), \dot{\rho}(z))$ is not conjugate to $(\rho(a), \dot{\rho}(a))$ for each $z \in(a, b)$. 
\end{Cor}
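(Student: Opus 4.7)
The plan is to reduce to Proposition \ref{pro:conjugate} via time reversal, using the reversed Lagrangian $\breve{L}(x,v):=L(x,-v)$ and Proposition \ref{pro:reverse} to transfer conjugacy statements. Set $\breve{\rho}:[a,b]\to M$, $\breve{\rho}(t):=\rho(a+b-t)$, so that $\dot{\breve{\rho}}(t)=-\dot{\rho}(a+b-t)$.

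First I would verify that $\breve{\rho}$ is a minimizer for the $\breve{L}$-action between $\rho(b)$ and $\rho(a)$ over $[a,b]$. The substitution $t\mapsto a+b-t$ combined with the identity $\breve{L}(x,v)=L(x,-v)$ yields a bijection between competitor curves for the two minimization problems that preserves the action, so minimality of $\rho$ for $L$ transfers at once to minimality of $\breve{\rho}$ for $\breve{L}$. Since $\breve{L}$ is also Tonelli, the hypothesis of Proposition \ref{pro:conjugate} is satisfied by $(\breve{\rho},\breve{L})$.

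Next, fix $z\in(a,b)$ and set $\tilde{z}:=a+b-z\in(a,b)$. Applying Proposition \ref{pro:conjugate} to $\breve{\rho}$ with respect to $\breve{L}$ at the interior parameter $\tilde{z}$ gives that $(\breve{\rho}(b),\dot{\breve{\rho}}(b))=(\rho(a),-\dot{\rho}(a))$ is not conjugate to $(\breve{\rho}(\tilde{z}),\dot{\breve{\rho}}(\tilde{z}))=(\rho(z),-\dot{\rho}(z))$ with respect to $\breve{L}$. Now apply Proposition \ref{pro:reverse} to the extremal $\rho|_{[a,z]}$: conjugacy of $(\rho(z),\dot{\rho}(z))$ to $(\rho(a),\dot{\rho}(a))$ with respect to $L$ is equivalent to conjugacy of $(\rho(a),-\dot{\rho}(a))$ to $(\rho(z),-\dot{\rho}(z))$ with respect to $\breve{L}$, and the previous step rules this out. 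Since $z\in(a,b)$ was arbitrary, the corollary follows.

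The only non-automatic step is the action-reversal identity in the first paragraph, which is a routine change of variables and presents no real obstacle. A more direct alternative would be to repeat the index-form argument of Proposition \ref{pro:conjugate} verbatim on $[a,z]$, taking a nonzero Jacobi field $J$ on $[a,z]$ with $J(a)=J(z)=0$, extending by zero on $[z,b]$, and perturbing by a smooth vector field $E$ vanishing at $a,b$ with $E(z)$ chosen so that the cross term $2\lambda I(\hat{J},E)$ becomes negative, thereby forcing $\rho|_{[a,b]}$ to have negative second variation and contradicting minimality. The reversal route is preferable because it simply recycles the already established statement.
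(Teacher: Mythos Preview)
Your proposal is correct. The paper, however, does not give a separate proof at all: it simply states ``With similar argument one also has the following statement,'' meaning the intended route is to rerun the index-form computation of Proposition~\ref{pro:conjugate} with the roles of the two endpoints interchanged --- precisely the alternative you sketch in your final paragraph.

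Your primary route is genuinely different: instead of redoing the second-variation argument, you pass to the reversed Lagrangian $\breve{L}$ and the reversed curve $\breve{\rho}$, invoke Proposition~\ref{pro:conjugate} once for $(\breve{\rho},\breve{L})$, and then translate back via Proposition~\ref{pro:reverse}. This is a clean reduction that avoids repeating any computation and makes explicit use of the reversal machinery the paper has already set up. The paper's ``similar argument'' route is perhaps more self-contained (it does not require Proposition~\ref{pro:reverse}), but yours is shorter and conceptually tidier, since it shows the corollary is literally the time-reversed form of the proposition rather than a parallel statement requiring parallel work.
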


\bibliographystyle{plain}
\bibliography{mybib}

\begin{thebibliography}{10}

\bibitem{Bernard2010b}
Patrick Bernard.
\newblock On the {C}onley decomposition of {M}ather sets.
\newblock {\em Rev. Mat. Iberoam.}, 26(1):115--132, 2010.

\bibitem{Bernard2012}
Patrick Bernard.
\newblock The {L}ax-{O}leinik semi-group: a {H}amiltonian point of view.
\newblock {\em Proc. Roy. Soc. Edinburgh Sect. A}, 142(6):1131--1177, 2012.

\bibitem{Bernard_Contreras2008}
Patrick Bernard and Gonzalo Contreras.
\newblock A generic property of families of {L}agrangian systems.
\newblock {\em Ann. of Math. (2)}, 167(3):1099--1108, 2008.

\bibitem{Burago_Ivanov_Kleiner1997}
D.~Burago, S.~Ivanov, and B.~Kleiner.
\newblock On the structure of the stable norm of periodic metrics.
\newblock {\em Math. Res. Lett.}, 4(6):791--808, 1997.

\bibitem{Calabi1958}
E.~Calabi.
\newblock An extension of {E}. {H}opf's maximum principle with an application
  to {R}iemannian geometry.
\newblock {\em Duke Math. J.}, 25:45--56, 1958.

\bibitem{Cannarsa_Cheng3}
Piermarco Cannarsa and Wei Cheng.
\newblock Generalized characteristics and {L}ax-{O}leinik operators: global
  theory.
\newblock {\em Calc. Var. Partial Differential Equations}, 56(5):Art. 125, 31,
  2017.

\bibitem{Cannarsa_Sinestrari_book}
Piermarco Cannarsa and Carlo Sinestrari.
\newblock {\em Semiconcave functions, {H}amilton-{J}acobi equations, and
  optimal control}, volume~58 of {\em Progress in Nonlinear Differential
  Equations and their Applications}.
\newblock Birkh{\"a}user Boston, Inc., Boston, MA, 2004.

\bibitem{Contreras_Iturriaga1999}
Gonzalo Contreras and Renato Iturriaga.
\newblock Convex {H}amiltonians without conjugate points.
\newblock {\em Ergodic Theory Dynam. Systems}, 19(4):901--952, 1999.

\bibitem{Contreras_Iturriagabook1999}
Gonzalo Contreras and Renato Iturriaga.
\newblock {\em Global minimizers of autonomous {L}agrangians}.
\newblock 22$^{\rm o}$ Col{\'o}quio Brasileiro de Matem{\'a}tica. Instituto de
  Matem{\'a}tica Pura e Aplicada (IMPA), Rio de Janeiro, 1999.

\bibitem{Contreras_Miranda2020}
Gonzalo Contreras and Jos\'{e} Ant\^{o}nio~G. Miranda.
\newblock On finite quotient {A}ubry set for generic geodesic flows.
\newblock {\em Math. Phys. Anal. Geom.}, 23(2):Paper No. 14, 11, 2020.

\bibitem{do_Carmo1992book}
Manfredo Perdig\~{a}o do~Carmo.
\newblock {\em Riemannian geometry}.
\newblock Mathematics: Theory \& Applications. Birkh\"{a}user Boston, Inc.,
  Boston, MA, 1992.

\bibitem{Duistermaat1976}
J.~J. Duistermaat.
\newblock On the {M}orse index in variational calculus.
\newblock {\em Advances in Math.}, 21(2):173--195, 1976.

\bibitem{Eschenburg_Heintze1984}
Jost Eschenburg and Ernst Heintze.
\newblock An elementary proof of the {C}heeger-{G}romoll splitting theorem.
\newblock {\em Ann. Global Anal. Geom.}, 2(2):141--151, 1984.

\bibitem{Fathi_book}
Albert Fathi.
\newblock {W}eak {KAM} theorem in {L}agrangian dynamics.
\newblock Cambridge University Press, Cambridge (to appear).

\bibitem{Fathi_Figalli2010}
Albert Fathi and Alessio Figalli.
\newblock Optimal transportation on non-compact manifolds.
\newblock {\em Israel J. Math.}, 175:1--59, 2010.

\bibitem{Fathi_Figalli_Rifford2009}
Albert Fathi, Alessio Figalli, and Ludovic Rifford.
\newblock On the {H}ausdorff dimension of the {M}ather quotient.
\newblock {\em Comm. Pure Appl. Math.}, 62(4):445--500, 2009.

\bibitem{Fathi_Siconolfi2004}
Albert Fathi and Antonio Siconolfi.
\newblock Existence of {$C^1$} critical subsolutions of the {H}amilton-{J}acobi
  equation.
\newblock {\em Invent. Math.}, 155(2):363--388, 2004.

\bibitem{Mane1992}
Ricardo Ma\~n{\'e}.
\newblock On the minimizing measures of {L}agrangian dynamical systems.
\newblock {\em Nonlinearity}, 5(3):623--638, 1992.

\bibitem{Mather1991}
John~N. Mather.
\newblock Action minimizing invariant measures for positive definite
  {L}agrangian systems.
\newblock {\em Math. Z.}, 207(2):169--207, 1991.

\bibitem{Mather1993}
John~N. Mather.
\newblock Variational construction of connecting orbits.
\newblock {\em Ann. Inst. Fourier (Grenoble)}, 43(5):1349--1386, 1993.

\bibitem{Mather2003}
John~N. Mather.
\newblock Total disconnectedness of the quotient {A}ubry set in low dimensions.
\newblock volume~56, pages 1178--1183. 2003.
\newblock Dedicated to the memory of J\"{u}rgen K. Moser.

\bibitem{Mather2004}
John~N. Mather.
\newblock Examples of {A}ubry sets.
\newblock {\em Ergodic Theory Dynam. Systems}, 24(5):1667--1723, 2004.

\bibitem{Petersen_book2016}
Peter Petersen.
\newblock {\em Riemannian geometry}, volume 171 of {\em Graduate Texts in
  Mathematics}.
\newblock Springer, Cham, third edition, 2016.

\bibitem{Rifford2008}
Ludovic Rifford.
\newblock On viscosity solutions of certain {H}amilton-{J}acobi equations:
  regularity results and generalized {S}ard's theorems.
\newblock {\em Comm. Partial Differential Equations}, 33(1-3):517--559, 2008.

\bibitem{Sakai1996book}
Takashi Sakai.
\newblock {\em Riemannian geometry}, volume 149 of {\em Translations of
  Mathematical Monographs}.
\newblock American Mathematical Society, Providence, RI, 1996.

\bibitem{Sorrentino2008}
Alfonso Sorrentino.
\newblock On the total disconnectedness of the quotient {A}ubry set.
\newblock {\em Ergodic Theory Dynam. Systems}, 28(1):267--290, 2008.

\bibitem{Villani_book2009}
C\'{e}dric Villani.
\newblock {\em Optimal transport: old and new}, volume 338 of {\em Grundlehren
  der Mathematischen Wissenschaften}.
\newblock Springer-Verlag, Berlin, 2009.

\bibitem{Warner1983book}
Frank~W. Warner.
\newblock {\em Foundations of differentiable manifolds and {L}ie groups},
  volume~94 of {\em Graduate Texts in Mathematics}.
\newblock Springer-Verlag, New York-Berlin, 1983.

\bibitem{Wei_Wylie2009}
Guofang Wei and Will Wylie.
\newblock Comparison geometry for the {B}akry-{E}mery {R}icci tensor.
\newblock {\em J. Differential Geom.}, 83(2):377--405, 2009.

\end{thebibliography}

\end{document}